\documentclass[final]{ppn}

\usepackage{color,graphicx,amsmath,amsfonts,amssymb,amsxtra,setspace,xspace,mathtools,esint}
\usepackage[colorlinks=true]{hyperref}
\hypersetup{urlcolor=blue, citecolor=red, linkcolor=blue}

\newcommand{\be}[1]{\begin{equation}\label{#1}}
\newcommand{\ee}{\end{equation}}
\renewcommand{\(}{\left(}
\renewcommand{\)}{\right)}

\newcommand{\Email}[1]{E-mail: \href{mailto:#1}{\textsf{#1}}}

\newcommand{\C}{\mathbb{C}}
\newcommand{\R}{{\mathbb R}}
\renewcommand{\S}{{\mathbb S}}
\newcommand{\N}{{\mathbb N}}
\newcommand{\Z}{{\mathbb Z}}
\newcommand{\T}{{\mathbb T^2}}

\newcommand{\nA}{\nabla_{\kern-2pt\mathbf A}\,}
\newcommand{\na}{\nabla_{\kern-2pt a}\,}
\newcommand{\LapA}{\Delta_{\!\mathbf A}}
\newcommand{\HA}{\mathrm H^1_{\!\mathbf A}}

\newcommand{\nrmS}[2]{\|{#1}\|_{\mathrm L^{#2}(\S^1)}}
\newcommand{\nrmSdeux}[2]{\|{#1}\|_{\mathrm L^{#2}(\S^2)}}
\newcommand{\nrmSd}[2]{\|{#1}\|_{\mathrm L^{#2}(\S^d)}}
\newcommand{\nrmSdmun}[2]{\|{#1}\|_{\mathrm L^{#2}(\S^{d-1})}}
\newcommand{\nrmRdeux}[2]{\|{#1}\|_{\mathrm L^{#2}(\R^2)}}

\newcommand{\nrmX}[2]{\|{#1}\|_{\mathrm L^{#2}(\mathcal X)}}
\newcommand{\nrmT}[2]{\|{#1}\|_{\mathrm L^{#2}(\T)}}

\newcommand{\iS}[1]{\int_{\S^1}{#1}\,d\theta}
\newcommand{\iSsigma}[1]{\int_{\S^1}{#1}\,d\sigma}
\newcommand{\iSdeux}[1]{\int_{\S^2}{#1}\,d\sigma}
\newcommand{\iT}[1]{\iint_{\T}{#1}\,d\sigma}
\newcommand{\ird}[1]{\int_{\R^3}{#1}\,dx}
\newcommand{\irdmu}[1]{\iiint_{\R^+\times[0,2\pi)\times\R}{#1}\,d\mu}

\newcommand{\ir}[2]{\int_{\R^{#1}}{#2}\,dx}

\newcommand{\pot}{\phi}

\begin{document}
\markboth{D.~Bonheure, J.~Dolbeault, M.J.~Esteban, A.~Laptev, \& M.~Loss}
{Inequalities with Aharonov-Bohm magnetic potentials}
\title{INEQUALITIES INVOLVING AHARONOV-BOHM MAGNETIC POTENTIALS IN DIMENSIONS $2$ AND $3$}

\author{DENIS BONHEURE}
\address{D\'epartement de math\'ematique, Facult\'e des sciences, Universit\'e Libre de Bruxelles,\\
Campus Plaine CP 213, Bld du Triomphe, B-1050 Brussels, Belgium\\
\Email{denis.bonheure@ulb.be}}

\author{JEAN DOLBEAULT}
\address{CEREMADE (CNRS UMR n$^\circ$ 7534), PSL university, Universit\'e Paris-Dauphine,\\
Place de Lattre de Tassigny, 75775 Paris 16, France\\
\Email{dolbeaul@ceremade.dauphine.fr}}

\author{MARIA J.~ESTEBAN}
\address{CEREMADE (CNRS UMR n$^\circ$ 7534), PSL university, Universit\'e Paris-Dauphine,\\
Place de Lattre de Tassigny, 75775 Paris 16, France\\
\Email{esteban@ceremade.dauphine.fr}}

\author{ARI LAPTEV}
\address{Department of Mathematics, Imperial College London,\\
Huxley Building, 180 Queen's Gate, London SW7 2AZ, UK\\
\Email{a.laptev@imperial.ac.uk}}

\author{MICHAEL LOSS}
\address{School of Mathematics, Skiles Building, Georgia Institute of Technology,\\
Atlanta GA 30332-0160, USA\\
\Email{loss@math.gatech.edu}}

\maketitle
\thispagestyle{empty}
\vspace*{-8pt}

\begin{abstract} This paper is devoted to a collection of results on nonlinear interpolation inequalities associated with Schr\"odinger operators involving Aharonov-Bohm magnetic potentials, and to some consequences. As symmetry plays an important role for establishing optimality results, we shall consider various cases corresponding to a circle, a two-dimensional sphere or a two-dimensional torus, and also the Euclidean spaces of dimensions two and three. Most of the results are new and we put the emphasis on the methods, as very little is known on symmetry, rigidity and optimality in presence of a magnetic field. The most spectacular applications are new magnetic Hardy inequalities in dimensions $2$ and~$3$.\end{abstract}

\keywords{Aharonov-Bohm magnetic potential; radial symmetry; cylindrical symmetry; symmetry breaking; magnetic Hardy inequality; magnetic interpolation inequality; optimal constants; magnetic Schr\"odinger operator; magnetic Keller-Lieb-Thirring inequality; magnetic rings.}

\ccode{Mathematics Subject Classification 2010: Primary: 81V10, 81Q10, 35Q60.\\ Secondary: 35Q40, 49K30, 35P30, 35J10, 35Q55, 46N50, 35J20.}

\section{Introduction}\label{Sec:Introduction}

Variational problems involving magnetic fields play a peculiar role in the calculus of variations. It is fair to say that there are no simple physical intuitions that may serve as a guide and one has to extract information through exact computations, such as in the case of the Landau Hamiltonian. A case in point are the \emph{symmetry properties of optimizers} where in the absence of magnetic fields one has fairly robust methods to prove this. The isoperimetric inequality is one of the main sources of intuition; rearrangements are essentially different versions of the isoperimetric problem. This is certainly not the case in the presence of magnetic fields and there are very few results in this direction.

An example is the work of Avron, Herbst and Simon \cite{AHS} who proved that the ground state of the hydrogenic atom in a constant magnetic field has cylindrical symmetry. The proof is quite involved. Another result that comes to mind is by Erd\"os \cite{Erdos96} who proved the equivalent of the Faber-Krahn inequality for the Schr\"odinger operator with a constant magnetic field and with a Dirichlet boundary condition in a domain. The disk yields the smallest ground state energy among domains of equal area. Again, the proof is quite involved and some arguments are tailored for a linear setting. In this connection one should mention the recent result of Bonheure, Nys and van Schaftingen \cite{MR3926043} who showed perturbatively that in some nonlinear variational problem involving a small constant magnetic field the minimizers inherit the symmetry of the problem. Besides the constant magnetic field case another class of physically relevant variational problems involve Aharonov-Bohm magnetic fields and the purpose of this article is to give an up-to-date account of our knowledge.

The \emph{Aharonov-Bohm effect} states that the wave function of a charged quantum particle passing by a thin magnetic solenoid experiences a phase shift. This, despite that there is no apparent interaction with the solenoid except through the interaction of the particle with the `unphysical' vector potential. This prediction was made originally by Ehrenberg and Siday in 1949 (see~\cite{Ehrenberg}) and then again in 1959 by Aharonov-Bohm (see~\cite{Aharonov_1959}) and we will stay with the custom of calling it the Aharonov-Bohm effect. It cannot be explained in terms of classical mechanics, but was nevertheless experimentally verified (see~\cite{Batelaan_2009}). It counts as one of the important quantum mechanical effects.

Another question one may pose is the influence of the Aharonov-Bohm potential on the energies of systems, say of a particle in a potential interacting with the solenoid. It is relatively straightforward to write the Hamiltonian for this situation and one may ask for the effect of the Aharonov-Bohm field on the \emph{spectrum} of the Hamiltonian. One fruitful approach is to relate the ground state energy of a quantum mechanical particle in an external potential to the minimization of a nonlinear dual variational problem. This also works in the presence of magnetic fields and in particular with the Aharonov-Bohm field. In this context this leads to nonlinear versions of Hardy type inequalities, or simply \emph{nonlinear interpolation inequalities}, which are the dual versions of the \emph{Keller-Lieb-Thirring spectral estimates}. In various symmetry settings, we are interested in getting as much insight as possible about the best constants in the inequalities and also about the qualitative properties of their extremal functions. Indeed, in many cases studying the symmetry properties of those extremal functions allows to get very accurate, and sometimes even sharp, estimates for the best constants in the inequalities (see for instance~\cite{HS-magnetic-sharp}).

\medskip On the Euclidean space $\R^d$, the \emph{magnetic Laplacian} is defined via a \emph{magnetic potential} $\mathbf A$ by
\[
-\,\LapA\,\psi=-\Delta\,\psi-\,2\,i\,\mathbf A\cdot\nabla\psi+|\mathbf A|^2\psi-\,i\,(\mbox{div}\,\mathbf A)\,\psi\,.
\]
We consider the case of dimensions $d=2$ and $d=3$. The \emph{magnetic field} is $\mathbf B=\mbox{curl}\,\mathbf A$. The quadratic form associated with $-\,\LapA$ is given by
$\int_{\R^d}|\nA\psi|^2\,dx$ and well defined for all functions in the space
\[
\HA(\R^d):=\Big\{\psi\in\mathrm L^2(\R^d)\,:\,\nA\psi\in\mathrm L^2(\R^d)\Big\}
\]
where the \emph{magnetic gradient} takes the form
\[
\nA:=\nabla+\,i\,\mathbf A\,.
\]
The \emph{Aharonov-Bohm magnetic field} can be considered as a singular measure supported in the set $x_1=x_2=0$, where $(x_i)_{i=1}^d$ is a system of cartesian coordinates. The magnetic potential is defined as follows.
\begin{description}
\item[$\bullet$] On $\R^2$, let us consider \emph{polar coordinates} $(r,\theta)$ such that
\[
r=|x|=\sqrt{x_1^2+x_2^2}\quad\mbox{and}\quad r\,e^{i\theta}=x_1+i\,x_2
\]
and the \emph{Aharonov-Bohm magnetic potential} 
\be{AB2}
\mathbf A=\frac a{r^2}\(x_2,-\,x_1\)=\frac ar\,\mathbf e_\theta
\ee
where $a$ is a real constant and $\{\mathbf e_r,\mathbf e_\theta\}$, with $\mathbf e_r=\frac xr$,  denotes the orthogonal basis associated with our polar coordinates. The magnetic gradient and the magnetic Laplacian are explicitly given by
\[
\nA=\(\frac\partial{\partial r},\,\frac1r\(\frac\partial{\partial\theta}-\,i\,a\)\)\,,\quad-\,\LapA=-\frac{\partial^2}{\partial r^2}-\frac1r\,\frac\partial{\partial r}-\frac1{r^2}\,\(\frac\partial{\partial\theta}-i\,a\)^2\,.
\]
\item[$\bullet$] On $\R^3$, let us consider \emph{cylindrical coordinates} $(\rho,\theta,z)$ where
\[
\rho=\sqrt{x_1^2+x_2^2}\,,\quad\rho\,e^{i\theta}=x_1+i\,x_2\quad\mbox{and}\quad z=x_3
\]
and the \emph{Aharonov-Bohm magnetic potential} 
\be{AB3}
\mathbf A=\frac a{\rho^2}\(x_2,-\,x_1,0\)\,.
\ee
The magnetic gradient and the magnetic Laplacian are explicitly given by
\[
\nA=\(\frac\partial{\partial\rho},\,\frac1\rho\(\frac\partial{\partial\theta}-\,i\,a\),\,\frac\partial{\partial z}\)\,,\quad-\,\LapA=-\frac{\partial^2}{\partial\rho^2}-\frac1\rho\,\frac\partial{\partial\rho}-\frac1{\rho^2}\,\(\frac\partial{\partial\theta}-i\,a\)^2-\frac{\partial^2}{\partial z^2}\,.
\]
\end{description}
We shall also consider Aharonov-Bohm type magnetic potentials on compact manifolds, namely on the circle, the sphere and the torus. The expression of the magnetic potential will be given case by case.

\medskip This paper is intended to provide a general overview of mathematical results and methods concerning various functional inequalities involving Aharonov-Bohm magnetic fields:
\begin{itemize}
\item Magnetic ground state energy estimates.
\item Nonlinear magnetic interpolation inequalities.
\item Rigidity results for optimal functions.
\item Magnetic Keller-Lieb-Thirring inequalities.
\item Magnetic Hardy inequalities.
\end{itemize}
Except Hardy inequalities, all above inequalities will be considered on the circle~$\S^1$, on the two-dimensional sphere $\S^2$, on the two-dimensional flat torus $\T$, on $\R^2$ and on $\R^3$, with consequences on Hardy inequalities on the Euclidean spaces~$\R^2$ and~$\R^3$. It is crucial to consider precise geometric settings as we are interested in optimal inequalities, which rely on non-trivial symmetry results. A typical nonlinear interpolation inequality is
\be{NMIIsuper}
\nrmX{\nA u}2^2+\lambda\,\nrmX u2^2\ge\mu_{\mathbf A}(\lambda)\,\nrmX up^2
\ee
for any function $u$ in the appropriate $\HA$ space, for any $\lambda>0$, and for any $p>2$, say on the compact manifold $\mathcal X$ in order to fix ideas. Assuming that $\mbox{vol}(\mathcal X)=1$, the issue of \emph{the optimal inequality} is to the determine the largest value of $\lambda>0$ for which we have $\mu_{\mathbf A}(\lambda)=\lambda+C$ for some constant $C$ which is computed in terms of $|\mathbf A|$ and depends on $\mathcal X$. Equality is then realized by the constants. It is usually not difficult to prove that the equality is achieved in the inequality if $\mu_{\mathbf A}(\lambda)$ denotes the optimal constant, for any $\lambda>0$. If we consider the Euler-Lagrange equation, this can be reformulated as the slightly more general \emph{rigidity} question. For which values of $\lambda$ do we know that any solution is actually a constant? To obtain rigidity, it is essential to establish symmetry properties, which is usually the most difficult step of the proof. In the non-compact case, optimal functions are not constants, which is an additional difficulty, but the problem can also be reduced to a symmetry issue.

There is another interpretation of the rigidity issue in terms of a \emph{phase transition}. In the compact manifold case, the optimal function for~\eqref{NMIIsuper} is always a constant if $\lambda>0$ is taken small enough. For large values of $\lambda$, a simple perturbation argument shows that no minimizer can be a constant, which results in a \emph{symmetry breaking} phenomenon, and one can prove in many cases that there is a bifurcation from a symmetric phase (solutions are constant) to a non-symmetric phase for a threshold value of $\lambda$ corresponding to the optimal inequality.

Keller-Lieb-Thirring (KLT) inequalities are estimates of the ground state energy $\lambda_1[\mathbf A,V]$ for the magnetic Schr\"odinger operator $-\,\LapA-V$ in terms of $\nrmX Vq$ and are obtained by duality from~\eqref{NMIIsuper} with $q=p/(p-2)$. KLT inequalities are in fact completely equivalent to~\eqref{NMIIsuper}, including for optimality issues and related rigidity questions, and essential for proving various \emph{magnetic Hardy inequalities}, which are one of the highlights and the main motivations of this paper. However, we emphasize the fact that the accurate spectral information is carried by the KLT inequalities.

We shall actually consider not only the \emph{superquadratic} case $p>2$, but also the \emph{subquadratic} case $p<2$ in which the role of the $\mathrm L^2$ and $\mathrm L^p$ norms are exchanged. The corresponding nonlinear interpolation inequality is
\be{NMIIsub}
\nrmX{\nA u}2^2+\mu\,\nrmX up^2\ge\lambda_{\mathbf A}(\mu)\,\nrmX u2^2
\ee
for any $p\in[1,2)$ and we can also draw a whole series of consequences (rigidity, KLT, Hardy) as in the superquadratic case. In particular, we are able to prove several new and interesting results on optimality and rigidity.

\medskip Our paper collects many results on functional inequalities with magnetic fields in different geometric settings. Therefore it is difficult to pick particularly significant results. Moreover we believe that the interest of the paper lies as much in the various methods as in the results because very little is known on optimal inequalities in presence of Aharonov-Bohm magnetic fields and on the symmetry properties of the corresponding optimal functions. The most visible outcome of our work is on Hardy inequalities, which are important tools in functional analysis. The presence of a magnetic field is a key feature, for instance in dimension $d=2$. Let us anyway draw the attention of the reader to some results that are prominent in this paper:
\begin{itemize}
\item Theorem~\ref{prop:rigiditypless2} deals with nonlinear magnetic interpolation inequalities, optimal constants and rigidity results on $\S^1$ in the subquadratic case. This is a new set of inequalities which complements the theory on \emph{magnetic rings} in the superquadratic case studied in \cite{doi:10.1063/1.5022121}. It was natural to study it in view of the \emph{carr\'e du champ} technique by Bakry and Emery in~\cite{MR808640}, but as far as we know, it is an entirely new result in presence of a magnetic potential when $p<2$.
\item Theorem~\ref{prop:InterpolationTorus} is the counterpart of Theorem~\ref{prop:rigiditypless2} in the case of the torus $\T\approx\S^1\times\S^1$. It is remarkable that we achieve an optimality result here as symmetry results on products of manifolds are known to be difficult.
\item Using subquadratic interpolation inequalities for proving KLT and then Hardy type inequalities is, as far as we know, an entirely new strategy: see Theorem~\ref{Thm:ImprovedHardy2}.
\item It is natural to consider also what happens on $\S^2$: see Proposition~\ref{prop:interpolation} and Corollary~\ref{Cor:KLT22} for results in the superquadratic case.
\item Results on magnetic Hardy inequalities of Theorem~\ref{Cor:Prop:Hardy2} are a new and striking application of the nonlinear Hardy-Sobolev interpolation inequalities of~\cite{HS-magnetic-sharp} on the Euclidean space $\R^2$.
\item Theorems~\ref{Thm:Aharonov-Bohm} and~\ref{Thm:ImprovedHardy} are two examples of application of the nonlinear magnetic interpolation inequalities to magnetic Hardy inequalities on $\R^3$, which significantly improve upon the results in~\cite{MR3202866,MR4134390}.
\end{itemize}

\medskip Let us conclude this introduction by some mathematical observations and some additional references. The overall question is to determine the functional spaces which are adapted to magnetic Schr\"odinger operators in the spirit of~\cite{MR3784917}. Magnetic interpolation inequalities (without optimal constants) are usually not an issue as they can be deduced from the non-magnetic interpolation inequalities by the \emph{diamagnetic inequality}: see for instance~\cite{MR1817225}. However we are interested in retaining information about the magnetic field and characterizing optimality cases, which is by far a more difficult target. As a convention, we shall speak of \emph{Hardy-Sobolev} inequalities when a term $\int|x|^{-2}\,|u|^2\,dx$ is subtracted from the kinetic energy and of \emph{Caffarelli-Kohn-Nirenberg} inequalities when various pure power weights are taken into account. On $\R^d$, $\LapA$ has the same scaling properties as the non-magnetic Laplacian if $\mathbf A$ is a Aharonov-Bohm magnetic potential and its spectrum is explicit. KLT inequalities are spectral estimates but it has to be emphasized that they differ from semi-classical estimates as they inherit nonlinear properties of the interpolation inequalities. Finally, it is a remarkable fact that, in presence of a magnetic potential, a Hardy inequality can be established in the two-dimensional case (see \cite{doi:10.1063/1.5022121,MR4134390,HoffLa2015,MR1708811} for related papers).

\medskip This paper is organized as follows. Section~\ref{Sec:previous} is devoted to some preliminary results on the circle $\S^1$ and on the two-dimensional sphere $\S^2$. New interpolation inequalities are established on the sphere $\S^2$, with an optimality result. Section~\ref{Sec:plessthan2} is devoted to the study of a class of subquadratic magnetic interpolation inequalities on $\S^1$ and on the flat torus $\T$. We are able to identify a sharp condition of optimality and deduce several Hardy inequalities in dimensions $d=2$ and $d=3$. In Section~\ref{Sec:Interpolation $p>2$} we start by recalling earlier, non-optimal but numerically almost sharp, results on interpolation inequalities on~$\R^2$ in the presence of an Aharonov-Bohm magnetic field in order to establish some Hardy inequalities on~$\R^2$. Section~\ref{Sec:Hardy-Aharonov-Bohm} is devoted to Hardy inequalities on~$\R^3$ with singularities which are either spherically or cylindrically symmetric.

\section{General set-up and preliminary results}\label{Sec:previous}

This section is devoted to various results on the sphere $\S^d$ without magnetic field (Section~\ref{Subsec:2.1}), in the superquadratic case when $d=1$ (Section~\ref{Subsec:2.2}) and when $d=2$ (Section~\ref{SubSec:S2}). Sections~\ref{Subsec:2.1} and~\ref{Subsec:2.2} are devoted to a survey of previous results and are given not only for completeness but also as introductory material for Section~\ref{SubSec:S2} and Section~\ref{Sec:plessthan2}.

\subsection{Non-magnetic interpolation inequalities on \texorpdfstring{$\S^d$}{Sd}}\label{Subsec:2.1}

On the sphere $\S^d$, we consider the uniform probability measure $d\sigma$, which is the measure induced by the Lebesgue measure in $\R^{d+1}$, duly normalized and denote by $\nrmSd\cdot q$ the corresponding $\mathrm L^q$ norm. Here we state known results for later use.

\subsubsection{Interpolation inequalities without weights}\label{Sec:withoutWeights}
The interpolation inequalities
\be{GNsph}
\nrmSd{\nabla u}2^2\ge\frac d{p-2}\(\nrmSd up^2-\nrmSd u2^2\)
\ee
hold for any $p\in[1,2)\cup(2,+\infty)$ if $d=1$ and $d=2$, and for any $p\in[1,2)\cup(2,2^*]$ if $d\ge3$, where $2^*:=2\,d/(d-2)$ is the Sobolev critical exponent. See~\cite{MR1230930,MR1134481} for $p>2$ and~\cite{MR808640} if $d=1$ or $d\ge2$ and $p\le\big(2\,d^2+1\big)/(d-1)^2$.

If $p>2$, we know from~\cite{MR3218815} that there exists a concave monotone increasing function $\lambda\mapsto \mu_{0,p}(\lambda)$ on $(0,+\infty)$ such that $\mu_{0,p}(\lambda)$ is the optimal constant in the inequality
\be{SuperLinear}
\nrmSd{\nabla u}2^2+\lambda\,\nrmSd u2^2\ge\mu_{0,p}(\lambda)\,\nrmSd up^2\quad\forall\,u\in\mathrm H^1(\S^d)
\ee
and that $\mu_{0,p}(\lambda)=\lambda$ if and only if $\lambda\le d/(p-2)$. In this range, equality is achieved if and only if $u$ is a constant function: this is a \emph{symmetry} range. On the opposite, if $\lambda>d/(p-2)$, the optimal function is not constant and we shall say that there is \emph{symmetry breaking}.

The case $1\le p<2$ is similar: there exists a concave monotone increasing function $\mu\mapsto\lambda_{0,p}(\mu)$ on $(0,+\infty)$ such that $\lambda_{0,p}(\mu)$ is the optimal constant in the inequality
\be{Sublinear}
\nrmSd{\nabla u}2^2+\mu\,\nrmSd up^2\ge\lambda_{0,p}(\mu)\,\nrmSd u2^2\quad\forall\,u\in\mathrm H^1(\S^d)
\ee
and that $\lambda_{0,p}(\mu)=\mu$ if and only if $\mu\le d/(2-p)$. In this symmetry range, constants are the optimal functions, while there is symmetry breaking if $\mu>d/(2-p)$: optimal functions are non-constant.

In the symmetry range, positive constants are actually the only \emph{positive} solutions of the Euler-Lagrange equation
\[
-\,\varepsilon\,\Delta u+\lambda\,u=u^{p-1}
\]
where $\varepsilon=\pm1$ is the sign of $(p-2)$, while there are multiple solutions in the symmetry breaking range. The limit case $p=2$ can be obtained by taking the limit as $p\to2$ and the corresponding inequality is the logarithmic Sobolev inequality. Much more is known and we refer to~\cite{MR3218815} for further details.

\subsubsection{A weighted Poincar\'e inequality for the ultra-spherical operator}
Using cylindrical coordinates $(z,\omega)\in[-1,1]\times\S^{d-1}$, we can rewrite the Laplace-Beltrami operator on $\S^d$ as
\[
\Delta=\mathcal L_d+\frac1{1-z^2}\,\Delta_\omega\quad\mbox{with}\quad\mathcal L_d\,u:=\(1-z^2\)u''-d\,z\,u'
\]
where $\Delta_\omega$ denotes the Laplace-Beltrami operator on $\S^{d-1}$ and $\mathcal L_d$ is the \emph{ultra-spherical operator}. In other words, $\mathcal L_d$ is the Laplace-Beltrami operator on $\S^d$ restricted to functions which depend only on $z$. The operator $\mathcal L_d$ has a basis of eigenfunctions $G_{\ell,d}$, the Gegenbauer polynomials, associated with the eigenvalues $\ell\,(\ell+d-1)$ for any $\ell\in\N$ (see~\cite{MR0372517}). Here $d$ is not necessarily an integer.

Let us consider the eigenvalue problem
\be{Ev0}
-\,\mathcal L_2\,f+\frac{4\,\mathsf a^2}{1-z^2}\,f=\lambda\,f\,.
\ee
By changing the unknown function according to $f(z)=\(1-z^2\)^{\mathsf a}\,g(z)$, we obtain that $g$ solves
\[
-\,\mathcal L_{2\,(2\,\mathsf a+1)}\,g+2\,\mathsf a\,(1+2\,\mathsf a)\,g=\lambda\,g
\]
which determines the eigenvalues $\lambda=\lambda_{\ell,\mathsf a}$ given by
\be{LambdaEllA}
\lambda_{\ell,\mathsf a}=\ell\,\big(\ell+2\,(2\,\mathsf a+1)-1\big)+2\,\mathsf a\,(1+2\,\mathsf a)=(\ell+2\,\mathsf a)\,(\ell+2\,\mathsf a+1)\,,\quad\ell\in\N\,.
\ee
We shall denote by $g_{\ell,\mathsf a}(z)=G_{\ell,2\,(2\,\mathsf a+1)}(z)$ the associated eigenfunctions and define $f_{\ell,\mathsf a}(z):=\(1-z^2\)^{\mathsf a}g_{\ell,\mathsf a}(z)$. By considering the lowest positive eigenvalue, we obtain a \emph{weighted Poincar\'e inequality}.
\begin{lemma}\label{Lemma:LdWPoincare} For any $\mathsf a\in\R$ and any function $f\in\mathrm H_0^1[-1,1]$, we have
\[
\int_{-1}^1\(\(1-z^2\)\big|f'(z)\big|^2+\frac{4\,\mathsf a^2}{1-z^2}\,|f(z)|^2\)dz\ge\lambda_{1,\mathsf a}\int_{-1}^1\,\big|f(z)-\bar f(z)\big|^2\,dz\,,
\]
where
\[
\bar f(z)=\(1-z^2\)^{\mathsf a}\,\frac{\int_{-1}^1f(z)\,\(1-z^2\)^{\mathsf a}\,dz}{\int_{-1}^1\(1-z^2\)^{2\,\mathsf a}\,dz}\,.
\]
Equality is achieved by a function $f$ if and only if $f$ is proportional to $f_{1,\mathsf a}(z)=z\,\(1-z^2\)^{\mathsf a}$.\end{lemma}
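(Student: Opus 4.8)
The plan is to recognize the left-hand side as the quadratic form of the operator $H_{\mathsf{a}}:=-\,\mathcal L_2+\frac{4\,\mathsf{a}^2}{1-z^2}$ on $\mathrm L^2(-1,1)$ (for the measure $dz$), and to read the inequality as the spectral gap of $H_{\mathsf{a}}$ above its ground state. Since the form depends only on $\mathsf{a}^2$ I may assume $\mathsf{a}\ge0$, so that $\lambda_{1,\mathsf{a}}=(1+2\,\mathsf{a})(2+2\,\mathsf{a})$ is the lowest positive eigenvalue. The function $\bar f$ is exactly the $\mathrm L^2(-1,1)$-projection of $f$ onto the ground state $f_{0,\mathsf{a}}\propto(1-z^2)^{\mathsf{a}}$: indeed $G_{0,\cdot}$ is constant, and the stated formula for $\bar f$ is precisely $\big(\int f\,(1-z^2)^{\mathsf{a}}dz\big)\big/\big(\int(1-z^2)^{2\mathsf{a}}dz\big)\,(1-z^2)^{\mathsf{a}}$. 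Thus the claim is that $H_{\mathsf{a}}$, restricted to the orthogonal complement of $f_{0,\mathsf{a}}$, is bounded below by $\lambda_{1,\mathsf{a}}$, with the extremal direction $f_{1,\mathsf{a}}$.

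The cleanest way I would carry this out is through the substitution $f=(1-z^2)^{\mathsf{a}}\,g$ already prepared above, which diagonalizes the endpoint singularity. One integration by parts (the boundary terms vanishing because $(1-z^2)^{2\mathsf{a}}\to0$ at $\pm1$ for $\mathsf{a}>0$) gives the identity
\[
\int_{-1}^1\!\Big((1-z^2)\,|f'|^2+\frac{4\,\mathsf{a}^2}{1-z^2}\,|f|^2\Big)dz=\int_{-1}^1(1-z^2)^{2\mathsf{a}+1}\,|g'|^2\,dz+2\,\mathsf{a}(1+2\,\mathsf{a})\int_{-1}^1(1-z^2)^{2\mathsf{a}}\,g^2\,dz\,.
\]
The first term on the right is the Dirichlet form of $-\,\mathcal L_{d'}$ with $d'=2(2\,\mathsf{a}+1)=4\,\mathsf{a}+2$ for the measure $d\nu:=(1-z^2)^{2\mathsf{a}}\,dz$, and under the substitution $\bar f$ corresponds to the $\nu$-mean $\langle g\rangle_\nu$, so that $\int_{-1}^1|f-\bar f|^2\,dz=\int_{-1}^1|g-\langle g\rangle_\nu|^2\,d\nu$.

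I would then invoke the scalar spectral gap of the ultra-spherical operator: since the first nonzero eigenvalue of $-\,\mathcal L_{d'}$ is $d'=4\,\mathsf{a}+2$ (the value $\ell(\ell+d'-1)$ at $\ell=1$, valid for non-integer $d'$), one has $\int(1-z^2)^{2\mathsf{a}+1}|g'|^2\,dz\ge(4\,\mathsf{a}+2)\int|g-\langle g\rangle_\nu|^2\,d\nu$. Splitting $\int g^2\,d\nu=\langle g\rangle_\nu^2\,\nu([-1,1])+\int|g-\langle g\rangle_\nu|^2\,d\nu$ and discarding the nonnegative mean contribution (here $\mathsf{a}\ge0$ is used) yields the lower bound with constant $(4\,\mathsf{a}+2)+2\,\mathsf{a}(1+2\,\mathsf{a})=2\,(2\,\mathsf{a}+1)(\mathsf{a}+1)=\lambda_{1,\mathsf{a}}$, which is exactly the asserted inequality. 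Equality forces $g-\langle g\rangle_\nu$ to be a first ultra-spherical eigenfunction, hence proportional to $G_{1,d'}\propto z$, and (for $\mathsf{a}>0$) $\langle g\rangle_\nu=0$; back-substituting gives $f\propto z\,(1-z^2)^{\mathsf{a}}=f_{1,\mathsf{a}}$.

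The part that needs the most care is not the spectral comparison but the endpoint bookkeeping: justifying the integration by parts and, more precisely, identifying the space of admissible $f$ (those in $\mathrm H^1_0[-1,1]$ with the form finite) with the natural form domain in the $g$ variable, so that the scalar Poincar\'e inequality applies. For $\mathsf{a}>0$ the confining potential $\frac{4\,\mathsf{a}^2}{1-z^2}$ forces $f$ to vanish at $\pm1$ and this is routine; the genuinely boundary case is $\mathsf{a}=0$, which reduces to the classical gap $\lambda_{1,0}=2$ of $-\,\mathcal L_2$ on mean-zero functions and where the extremal $z$ fails to lie in $\mathrm H^1_0[-1,1]$, so equality is only approached. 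Equivalently, the whole statement can be obtained from the completeness of the family $\{f_{\ell,\mathsf{a}}\}_{\ell\ge0}$ in $\mathrm L^2(-1,1)$ together with Parseval, at the cost of the same form-domain justification.
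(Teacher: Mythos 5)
Your proof is correct and takes essentially the same route as the paper, which establishes the lemma via the same substitution $f=(1-z^2)^{\mathsf a}\,g$ (the quadratic-form identity you derive is exactly the paper's ``consistency'' remark following the lemma), the resulting identification with the ultra-spherical operator $-\,\mathcal L_{2(2\mathsf a+1)}+2\,\mathsf a\,(1+2\,\mathsf a)$, and the spectral gap $\lambda_{1,\mathsf a}$ above the ground state $(1-z^2)^{\mathsf a}$, onto which $\bar f$ is precisely the $\mathrm L^2$-projection. The only caveat is that your reduction to $\mathsf a\ge 0$ on the grounds that ``the form depends only on $\mathsf a^2$'' does not literally dispose of the case $\mathsf a<0$, since $\bar f$, $f_{1,\mathsf a}$ and $\lambda_{1,\mathsf a}$ all depend on the sign of $\mathsf a$; but this matches the paper's own implicit restriction, as only $\mathsf a=|k-a|/2\ge0$ is ever used.
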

Notice for consistency that, if $f(z)=\(1-z^2\)^{\mathsf a}\,g(z)$, then
\begin{multline*}
\int_{-1}^1\(\(1-z^2\)\big|f'(z)\big|^2+\frac{4\,\mathsf a^2}{1-z^2}\,|f(z)|^2\)dz\\
=\int_{-1}^1\(\(1-z^2\)\big|g'(z)\big|^2+2\,\mathsf a\,(1+2\,\mathsf a)\,|g(z)|^2\)\(1-z^2\)^{2\,\mathsf a}\,dz\,,
\end{multline*}
where the right-hand side is the Dirichlet form associated with the operator
\[
-\,\mathcal L_{2\,(2\,\mathsf a+1)}+2\,\mathsf a\,(1+2\,\mathsf a)\,.
\]

\subsection{Magnetic rings: superquadratic inequalities on \texorpdfstring{$\S^1$}{S1}}\label{Subsec:2.2}

In this section, we review a series of results which have been obtained in~\cite{doi:10.1063/1.5022121} in the superquadratic case $p>2$, in preparation for an extension to the subquadratic case $p\in[1,2)$ that will be studied in Section~\ref{Sec:plessthan2}.

\subsubsection{Magnetic interpolation inequalities and consequences}\label{SubSec:MR}
Let us consider the superquadratic case $p>2$ in dimension $d=1$. We recall that $d\sigma=(2\pi)^{-1}\,d\theta$ where $\theta\in[0,2\pi)\approx\mathbb S^1$. As in~\cite{doi:10.1063/1.5022121} we consider the space $\mathrm H^1(\S^1)$ of the $2\pi$-periodic functions $u\in C^{0,1/2}(\S^1)$, such that $u'\in\mathrm L^2(\S^1)$. Inequality~\eqref{SuperLinear} can be rewritten as
\be{Ineq:InterpZero0}
\nrmS{u'}2^2+\lambda\,\nrmS u2^2\ge\lambda\,\nrmS u p^2\quad\forall\,u\in\mathrm H^1(\S^1)
\ee
for any $\lambda\in(0,1/(p-2)]$. We also have the inequality
\be{Ineq:InterpZero2}
\nrmS{u'}2^2+\frac14\,\nrmS{u^{-1}}2^{-2}\ge\frac14\,\nrmS u2^2\quad\forall\,u\in\mathrm H^1(\S^1)\,,
\ee
according to~\cite{MR1708787}, with the convention that $\nrmS{u^{-1}}2^{-2}=0$ if $u^{-2}$ is not integrable and, as a special case, if $u$ changes sign. Notice that inequality~\eqref{Ineq:InterpZero2} is formally the case $p=2\,d/(d-2)$ and $\lambda=d/(p-2)$ of~\eqref{SuperLinear} when $d=1$ (see~\cite[Appendix~A]{doi:10.1063/1.5022121}).

In~\cite{doi:10.1063/1.5022121}, it was shown that the inequality (for complex valued functions)
\be{Ineq:Interp0}
\nrmS{\psi'-i\,a\,\psi}2^2+\lambda\,\nrmS\psi2^2\ge\mu_{a,p}(\lambda)\,\nrmS\psi p^2\quad\forall\,\psi\in\mathrm H^1(\S^1,\mathbb C)
\ee
is equivalent, after eliminating the phase, to the inequality
\[
\nrmS{u'}2^2+a^2\,\nrmS{u^{-1}}2^{-2}+\lambda\,\nrmS u2^2\ge\mu_{a,p}(\lambda)\,\nrmS up^2\quad\forall\,u\in\mathrm H^1(\S^1)\,.
\]
The equivalence is relatively easy to prove if $\psi$ does not vanish, but some care is required otherwise: see~\cite{doi:10.1063/1.5022121} for details. Here we denote by $\mu_{a,p}(\lambda)$ the optimal constant in~\eqref{Ineq:Interp0}. Using~\eqref{Ineq:Interp0} and then~\eqref{Ineq:InterpZero0}, we obtain that
\begin{multline*}
\nrmS{u'}2^2+a^2\,\nrmS{u^{-1}}2^{-2}+\lambda\,\nrmS u2^2\\
=(1-4\,a^2)\,\nrmS{u'}2^2+\lambda\,\nrmS u2^2+4\,a^2\,\big(\nrmS{u'}2^2+\tfrac14\,\nrmS{{u}^{-1}}2^2\big)\\
\ge(1-4\,a^2)\,\big(\nrmS{u'}2^2+\tfrac{a^2+\lambda}{1-4\,a^2}\,\nrmS u2^2\big)\ge(a^2+\lambda)\,\nrmS up^2
\end{multline*}
under the condition $(a^2+\lambda)/(1-4\,a^2)\le1/(p-2)$, which provides an estimate of $\mu_{a,p}(\lambda)$. This estimate turns out to be optimal.
\begin{proposition}[\cite{doi:10.1063/1.5022121}]\label{prop:rigidity} Let $p>2$, $a\in[0,1/2]$, and $\lambda>-\,a^2$.
\begin{enumerate}
\item[(i)] If $a^2\,(p+2)+\lambda\,(p-2)\le1$, then $\mu_{a,p}(\lambda)=a^2+\lambda$ and equality in~\eqref{Ineq:Interp0} is achieved only by the constants.
\item[(ii)] If $a^2\,(p+2)+\lambda\,(p-2)>1$, then $\mu_{a,p}(\lambda)<a^2+\lambda$ and equality in~\eqref{Ineq:Interp0} is not achieved by the constants.
\end{enumerate}
\end{proposition}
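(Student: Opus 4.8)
The plan is to pin down $\mu_{a,p}(\lambda)$ by matching an upper and a lower bound, and then to resolve the equality/breaking dichotomy by a second variation around the constant. The upper bound is immediate: testing~\eqref{Ineq:Interp0} with $\psi\equiv1$ gives $\nrmS{\psi'-i\,a\,\psi}2^2=a^2$ and $\nrmS\psi2^2=\nrmS\psi p^2=1$ (recall that $d\sigma$ is a probability measure), so $\mu_{a,p}(\lambda)\le a^2+\lambda$ always, with the constants as candidate optimizers. Everything then reduces to deciding when this value is optimal and when constants are the only functions realizing equality.

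For part~(i), the lower bound $\mu_{a,p}(\lambda)\ge a^2+\lambda$ is exactly the chain of inequalities displayed just before the statement: after passing to $u=|\psi|$ and eliminating the phase as in~\cite{doi:10.1063/1.5022121}, one splits the energy, applies~\eqref{Ineq:InterpZero2} to the block $\nrmS{u'}2^2+\tfrac14\,\nrmS{u^{-1}}2^{-2}$, and then~\eqref{Ineq:InterpZero0} with parameter $\Lambda=(a^2+\lambda)/(1-4\,a^2)$. The admissibility condition $\Lambda\le 1/(p-2)$ is, after clearing denominators, precisely $a^2\,(p+2)+\lambda\,(p-2)\le1$; note that this forces $a<1/2$, so $1-4\,a^2>0$ and the splitting is licit. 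Combined with the upper bound this yields $\mu_{a,p}(\lambda)=a^2+\lambda$. For the equality claim I would track equality cases backwards: equality in the last step~\eqref{Ineq:InterpZero0} forces $u$ to be constant by the rigidity recalled after~\eqref{SuperLinear} (valid up to and including the endpoint $\Lambda=1/(p-2)$), one checks that constants saturate~\eqref{Ineq:InterpZero2} as well, and reinstating the optimal phase (winding number $0$, which is optimal for $a\in[0,1/2]$) upgrades this to $\psi$ itself being constant.

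For part~(ii) the idea is to destabilize the constant. I would insert $\psi=1+t\,v$ with $\iSsigma v=0$ into the Rayleigh quotient $\mathcal Q_{a,\lambda}[\psi]=\big(\nrmS{\psi'-i\,a\,\psi}2^2+\lambda\,\nrmS\psi2^2\big)/\nrmS\psi p^2$ and expand to order $t^2$. The numerator is exactly quadratic and, since $\iSsigma v=0$, has no linear term; the delicate piece is the $\mathrm L^p$ denominator, whose Taylor expansion gives $\nrmS\psi p^2=1+t^2\big(\nrmS v2^2+(p-2)\,\iSsigma{(\mathrm{Re}\,v)^2}\big)+o(t^2)$. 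Collecting terms produces $\mathcal Q_{a,\lambda}[1+t\,v]=(a^2+\lambda)+t^2\,\delta^2[v]+o(t^2)$ with
\[
\delta^2[v]=\iSsigma{|v'-i\,a\,v|^2}-a^2\,\nrmS v2^2-(a^2+\lambda)\,(p-2)\,\iSsigma{(\mathrm{Re}\,v)^2}\,.
\]
The crucial point is that the cross term $\iSsigma{(\mathrm{Re}\,v)^2}$ couples the Fourier modes $e^{i\theta}$ and $e^{-i\theta}$, so a single exponential is not sharp. Restricting to $v=c_1\,e^{i\theta}+c_{-1}\,e^{-i\theta}$ and choosing the phase so that $\mathrm{Re}(c_1\,c_{-1})=|c_1|\,|c_{-1}|$, one reduces $\delta^2$ to the quadratic form $\big(1-2a-\tfrac\beta2\big)\,x^2+\big(1+2a-\tfrac\beta2\big)\,y^2-\beta\,x\,y$ in $(x,y)=(|c_1|,|c_{-1}|)$, where $\beta=(a^2+\lambda)\,(p-2)$. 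Its determinant equals $1-4\,a^2-\beta=1-\big(a^2\,(p+2)+\lambda\,(p-2)\big)$, which is negative exactly under hypothesis~(ii); the associated negative eigenvector lies in the first quadrant, so there is an admissible $v$ with $\delta^2[v]<0$. Hence $\mathcal Q_{a,\lambda}[1+t\,v]<a^2+\lambda$ for small $t\ne0$, giving $\mu_{a,p}(\lambda)<a^2+\lambda$ and, in particular, that constants are not optimal. Checking that the pair $\pm k$ has determinant $k^2\,(k^2-4\,a^2-\beta)$ confirms that $|k|=1$ is the first instability, so the threshold is sharp.

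The main obstacle is the sharp computation in~(ii). One has to recognize that the second variation of the $\mathrm L^p$ norm couples the $\pm1$ modes: testing with a single complex exponential yields only the non-sharp condition $\beta>2\,(1-2a)$, whereas the exact threshold $\beta>1-4\,a^2$ emerges solely from the $2\times2$ form built from the coupled pair, whose determinant must be identified as $1-\big(a^2\,(p+2)+\lambda\,(p-2)\big)$. A secondary technical nuisance is the phase-elimination bookkeeping (possible vanishing of $\psi$ and the choice of winding number) needed in~(i) to pass the equality analysis from $u=|\psi|$ back to $\psi$.
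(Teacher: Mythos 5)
Your proposal is correct. Part~(i) is essentially the paper's own argument: the splitting of the reduced scalar energy into a multiple of~\eqref{Ineq:InterpZero2} plus a multiple of~\eqref{Ineq:InterpZero0} is displayed verbatim just before the statement, and your admissibility condition $(a^2+\lambda)/(1-4\,a^2)\le1/(p-2)$ is exactly the stated threshold. Part~(ii), however, takes a genuinely different route from the one in~\cite{doi:10.1063/1.5022121}, which is mirrored in this paper for the subquadratic analogue (Theorem~\ref{prop:rigiditypless2}): there one first reduces to the real scalar functional containing the term $a^2\,\nrmS{u^{-1}}2^{-2}$ --- a reduction that requires the non-vanishing of optimizers via a Wronskian argument and a minimization over winding numbers (cf.\ Lemmas~\ref{Lem:NonVanishing} and~\ref{lem:reduction}) --- and then perturbs by $u=1+\varepsilon\cos s$; the sharp threshold emerges because $a^2\,\nrmS{u^{-1}}2^{-2}$ contributes $-\,3\,a^2\,\varepsilon^2\iSsigma{|\cos s|^2}$ at quadratic order, yielding the coefficient $1-(p+2)\,a^2-(p-2)\,\lambda$. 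You instead perturb the complex functional directly and recover the same threshold from the coupling of the $e^{\pm i\theta}$ modes in the second variation of the $\mathrm L^p$ norm: your determinant $1-4\,a^2-\beta$ with $\beta=(a^2+\lambda)(p-2)$ is that same coefficient in different coordinates, and your form does take negative values in the closed first quadrant, as needed. Your route is more elementary and self-contained for part~(ii) --- no phase elimination is required to destabilize the constant --- at the cost of having to notice that a single exponential is not sharp, whereas the scalar reduction encodes the optimal phase response automatically; on the other hand the scalar reduction is in any case what makes the equality analysis in part~(i) rigorous (including the possible vanishing of $\psi$ and the choice $k=0$ of winding number for $a\in[0,1/2]$), which you correctly flag and defer to the cited reference.
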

The condition $a\in[0,1/2]$ is not a restriction. First, replacing $\psi$ by $e^{iks}\,\psi(s)$ for any $k\in\mathbb Z$ shows that $\mu_{a+k,p}(\mu)=\mu_{a,p}(\mu)$ so that we can assume that $a\in[0,1]$. Then by considering $\chi(s)=e^{-is}\,\overline\psi(s)$, we find that
\[
|\psi'+i\,a\,\psi|^2=|\chi'+i\,(1-a)\,\chi|^2\,,
\]
hence $\mu_{a,p}(\mu)=\mu_{1-a,p}(\mu)$.

\subsubsection{Magnetic Hardy inequalities on~\texorpdfstring{$\S^1$}{S1} and~\texorpdfstring{$\R^2$}{R2}}
As in~\cite{doi:10.1063/1.5022121}, we can draw an easy consequence of Proposition~\ref{prop:rigidity} on a Keller-Lieb-Thirring type inequality. By H\"older's inequality applied with $q=p/(p-2)$, we have
\[
\nrmS{\psi'-i\,a\,\psi}2^2-\mu^{-1}\,\iS{\pot\,|\psi|^2}\ge\nrmS{\psi'-i\,a\,\psi}2^2-\mu^{-1}\,\nrmS\pot q\,\nrmS\psi p^2\,.
\]
Using~\eqref{Ineq:Interp0} with $\lambda=0$ and $\mu$ such that $\mu^{-1}\,\nrmS\pot q=\mu_{a,p}(0)$, we know that the right-hand side is nonnegative. See~\cite{doi:10.1063/1.5022121} for more details. Altogether we obtain the following \emph{magnetic Hardy inequality on $\S^1$: for any $a\in\R$, any $p>2$ and $q=p/(p-2)$, if~$\pot$ is a non-trivial potential in $\mathrm L^q(\S^1)$, then}
\be{Ineq:KLT1}
\nrmS{\psi'-i\,a\,\psi}2^2\ge\frac{\mu_{a,p}(0)}{\nrmS\pot q}\iSsigma{\pot\,|\psi|^2}\quad\forall\,\psi\in\HA(\S^1)\,.
\ee
This is a special case of the more general interpolation inequality
\begin{multline}\label{Ineq:KLT1spectral}
\nrmS{\psi'-i\,a\,\psi}2^2-\iS{\pot\,|\psi|^2}\ge\nrmS{\psi'-i\,a\,\psi}2^2-\mu\,\nrmS\psi p^2\\
\ge-\,\lambda_{a,p}(\mu)\,\nrmS\psi2^2
\end{multline}
with $\mu=\nrmS\pot q$, where we denote by $\lambda_{a,p}(\mu)$ the inverse function of $\lambda\mapsto\mu_{a,p}(\lambda)$, as defined in Proposition~\ref{prop:rigidity}. See~\cite{MR3784917} for details.

\medskip The standard non-magnetic Hardy inequality on $\R^d$, \emph{i.e.},
\[
\int_{\R^d}|\nabla\psi|^2\,dx\ge\frac14\,(d-2)^2\int_{\R^d}\frac{|\psi|^2}{|x|^2}\,dx\quad\forall\,\psi\in\mathrm H^1(\R^d)\,,
\]
degenerates if $d=2$, but this degeneracy is lifted in the presence of a Aharonov-Bohm magnetic field. According to~\cite{MR1708811}, we have
\[\label{HardyAB}
\int_{\R^2}|\nA\psi|^2\,dx\ge\min_{k\in\mathbb Z}\,(a-k)^2\int_{\R^2}\frac{|\psi|^2}{|x|^2}\,dx\quad\forall\,\psi\in\mathrm H^1(\R^d)\,.
\]
It is natural to ask whether an improvement can be obtained if the singularity $|x|^{-2}$ is replaced by a weight which has an angular dependence. Using polar coordinates $x\approx(r,\theta)$ and interpolation inequalities of~\cite{MR3218815}, the inequality
\[
\int_{\R^d}|\nabla\psi|^2\,dx\ge\frac{(d-2)^2}{4\,\nrmSdmun\varphi q}\int_{\R^d}\frac{\varphi(\theta)}{|x|^2}\,|\psi|^2\,dx\quad\forall\,\psi\in\mathrm H^1(\R^d)
\]
was proved in~\cite{HoffLa2015}, under the condition that $q\ge1+\frac12\,(d-2)^2/(d-1)$, again with normalized measure on $\S^{d-1}$. Magnetic and non-radial improvements have been combined in~\cite{doi:10.1063/1.5022121}. Let us give a statement in preparation for similar extensions to the case of dimension $d=3$.
\begin{corollary}[\cite{doi:10.1063/1.5022121}]\label{Cor:HardyMR} Let $\mathbf A$ as in~\eqref{AB2}, $a\in\left[0,1/2\right]$, $p>2$, $q=p/(p-2)$ and assume that $\varphi$ is a non-negative function in $\mathrm L^q(\S^1)$. With the above notations, the inequality
\[\label{Hardy1}
\int_{\R^2}|\nA\psi|^2\,dx\ge\tau\int_{\R^2}\frac{\varphi(\theta)}{|x|^2}\,|\psi|^2\,dx\quad\forall\,\psi\in\mathrm H^1(\R^d)
\]
holds with a constant $\tau>0$ which is the unique solution of the equation
\[
\lambda_{a,p}(\tau\,\nrmS\varphi q)=0\,.
\]
Moreover, $\tau=a^2/\nrmS\varphi q$ if $a^2\le1/(p+2)$.\end{corollary}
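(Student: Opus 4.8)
The plan is to reduce the two-dimensional magnetic Hardy inequality to the one-dimensional Keller--Lieb--Thirring inequality \eqref{Ineq:KLT1spectral} on $\S^1$, exploiting the common scale invariance of the two sides. First I would pass to polar coordinates $(r,\theta)$ and perform the logarithmic change of variable $s=\log r$, setting $\tilde\psi(s,\theta):=\psi(e^s,\theta)$. Since $\mathbf A$ is the Aharonov--Bohm potential \eqref{AB2}, one has $|\nA\psi|^2=|\partial_r\psi|^2+r^{-2}\,|(\partial_\theta-i\,a)\psi|^2$, and with $dx=r\,dr\,d\theta$ the substitution turns the magnetic Dirichlet energy into
\[
\int_{\R^2}|\nA\psi|^2\,dx=\iint_{\R\times[0,2\pi)}\Big(|\partial_s\tilde\psi|^2+|(\partial_\theta-i\,a)\tilde\psi|^2\Big)\,d\theta\,ds\,,
\]
while the weighted term becomes $\int_{\R^2}\varphi(\theta)\,|x|^{-2}\,|\psi|^2\,dx=\iint_{\R\times[0,2\pi)}\varphi(\theta)\,|\tilde\psi|^2\,d\theta\,ds$. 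The whole point of the logarithmic variable is that the factors $r^{\pm2}$ cancel, so the problem decouples fiberwise over $s\in\R$ into a family of one-dimensional problems on $\S^1$.

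Next, for a.e.\ fixed $s$ I would apply \eqref{Ineq:KLT1spectral} to $\tilde\psi(s,\cdot)\in\mathrm H^1(\S^1,\C)$ with the potential $\pot=\tau\,\varphi$, so that $\mu=\nrmS\pot q=\tau\,\nrmS\varphi q$. This gives
\[
\nrmS{(\partial_\theta-i\,a)\tilde\psi(s,\cdot)}2^2-\tau\iSsigma{\varphi\,|\tilde\psi(s,\cdot)|^2}\ge-\,\lambda_{a,p}\big(\tau\,\nrmS\varphi q\big)\,\nrmS{\tilde\psi(s,\cdot)}2^2\,.
\]
Choosing $\tau$ to be the root of $\lambda_{a,p}(\tau\,\nrmS\varphi q)=0$ annihilates the right-hand side. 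Multiplying by $2\pi$ and integrating in $s\in\R$, the angular part of the Dirichlet energy alone dominates $\tau\int_{\R^2}\varphi\,|x|^{-2}\,|\psi|^2\,dx$; discarding the manifestly nonnegative term $\iint_{\R\times[0,2\pi)}|\partial_s\tilde\psi|^2\,d\theta\,ds$ then yields the claimed inequality. Existence and uniqueness of such a $\tau>0$ follow because $\lambda_{a,p}$ is the inverse of the concave increasing function $\lambda\mapsto\mu_{a,p}(\lambda)$ of Proposition~\ref{prop:rigidity}, so $\lambda_{a,p}(\mu)=0$ has the single root $\mu=\mu_{a,p}(0)$ (which is positive once $a>0$), whence $\tau=\mu_{a,p}(0)/\nrmS\varphi q$.

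For the final assertion I would simply read off Proposition~\ref{prop:rigidity}(i) at $\lambda=0$: the hypothesis $a^2\le1/(p+2)$ is exactly the condition $a^2\,(p+2)+\lambda\,(p-2)\le1$ evaluated at $\lambda=0$, so that $\mu_{a,p}(0)=a^2$ and therefore $\tau=a^2/\nrmS\varphi q$.

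I expect the only genuinely delicate step to be the measurability and integrability bookkeeping underlying the fiberwise argument: one must justify via Fubini that $\tilde\psi(s,\cdot)$ lies in $\mathrm H^1(\S^1)$ for a.e.\ $s$, that the slicewise inequality may be integrated in $s$, and that it suffices to prove the estimate on a convenient dense class (for instance $\psi\in C^\infty_c(\R^2\setminus\{0\})$) before extending to the full space. The analytic heart of the statement---the \emph{sharp} one-dimensional bound carried by the optimal constant $\mu_{a,p}$---is entirely contained in Proposition~\ref{prop:rigidity}, so no new spectral work is needed here.
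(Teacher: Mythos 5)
Your proof is correct and follows essentially the same route as the paper: discarding the radial derivative (your logarithmic substitution $s=\log r$ is just a repackaging of integrating the slicewise bound against $dr/r$) reduces the estimate fiberwise to the $\S^1$ magnetic Hardy inequality~\eqref{Ineq:KLT1}, that is, to~\eqref{Ineq:KLT1spectral} evaluated where $\lambda_{a,p}(\mu)=0$. The identification $\tau=\mu_{a,p}(0)/\nrmS\varphi q$ and the reading of Proposition~\ref{prop:rigidity}(i) at $\lambda=0$ for the case $a^2\le1/(p+2)$ are exactly as intended by the paper.
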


\subsection{Magnetic interpolation inequalities on~\texorpdfstring{$\S^2$}{S2}}\label{SubSec:S2}

In the spirit of Sections~\ref{Subsec:2.1} and~\ref{Subsec:2.2}, we state some new results concerning the two-dimensional sphere with main results in Proposition~\ref{prop:interpolation} and Corollary~\ref{Cor:KLT22}.

\subsubsection{A magnetic ground state estimate}
Let us consider the magnetic Laplacian on $\S^2$ and the associated Dirichlet form $\int_{\S^2}|\nA u|^2\,d\sigma$ where $d\sigma$ is the uniform probability measure on $\S^2$. Using cylindrical coordinates $(\theta, z)\in[0,2\pi)\times [-1,1]$, we can write that $d\sigma=\frac1{4\,\pi}\,dz\,d\theta$ and assume that the magnetic gradient takes the form
\be{nablaS2}
\nA u=\(\begin{array}{c}\sqrt{1-z^2}\,\frac{\partial u}{\partial z}\\
\frac1{\sqrt{1-z^2}}\(\frac{\partial u}{\partial\theta}-i\,a\,u\)
\end{array}\)
\ee
where $a>0$ is a magnetic flux, so that
\[
|\nA u|^2=\(1-z^2\)\left|\frac{\partial u}{\partial z}\right|^2+\frac1{1-z^2}\,\left|\frac{\partial u}{\partial\theta}-i\,a\,u\,\right|^2\,.
\]
\begin{lemma}\label{lem:gapp} Assume that $a\in\R$. With the notation~\eqref{nablaS2}, we have
\[
\iSdeux{|\nA u|^2}\ge\Lambda_a\iSdeux{|u|^2}\quad\forall\,u\in\HA(\S^2)
\]
with optimal constant
\be{Lambdaa}
\Lambda_a=\min_{k\in\Z}|k-a|\,\big(|k-a|+1\big)\,.
\ee\end{lemma}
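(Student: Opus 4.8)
The plan is to diagonalize the quadratic form by a Fourier decomposition in the angular variable and then, mode by mode, to recognize the one-dimensional weighted operator whose spectrum is given by~\eqref{LambdaEllA}. Writing $u(z,\theta)=\sum_{k\in\Z}u_k(z)\,e^{i\,k\,\theta}$ and using $d\sigma=\frac1{4\pi}\,dz\,d\theta$ together with the expression~\eqref{nablaS2}, I would carry out the integration in $\theta$ by Parseval's identity: the cross terms vanish and the operator $\partial_\theta-i\,a$ acts on the $k$-th mode as multiplication by $i\,(k-a)$. This gives
\[
\iSdeux{|\nA u|^2}=\frac12\sum_{k\in\Z}\int_{-1}^1\(\(1-z^2\)\,|u_k'(z)|^2+\frac{(k-a)^2}{1-z^2}\,|u_k(z)|^2\)dz\,,
\]
so that the form is block-diagonal with respect to $k$, and likewise $\iSdeux{|u|^2}=\frac12\sum_{k\in\Z}\int_{-1}^1|u_k|^2\,dz$.

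Second, I would identify each block with the Dirichlet form of the operator $-\,\mathcal L_2+\frac{4\,\mathsf a^2}{1-z^2}$ appearing in~\eqref{Ev0}, upon setting $2\,\mathsf a=|k-a|$. By the computation recorded right after Lemma~\ref{Lemma:LdWPoincare}, the substitution $u_k(z)=\(1-z^2\)^{\mathsf a}g(z)$ turns this form into the Dirichlet form of $-\,\mathcal L_{2(2\mathsf a+1)}+2\,\mathsf a\,(1+2\,\mathsf a)$ against the weight $\(1-z^2\)^{2\mathsf a}$. Since the lowest eigenvalue of the ultra-spherical operator $-\,\mathcal L_{2(2\mathsf a+1)}$ is $0$, attained by the constants ($\ell=0$), the bottom of the block is $\lambda_{0,\mathsf a}=2\,\mathsf a\,(1+2\,\mathsf a)=|k-a|\,(|k-a|+1)$ by~\eqref{LambdaEllA}. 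Hence, for each $k$,
\[
\int_{-1}^1\(\(1-z^2\)\,|u_k'|^2+\frac{(k-a)^2}{1-z^2}\,|u_k|^2\)dz\ge|k-a|\,(|k-a|+1)\int_{-1}^1|u_k|^2\,dz\,.
\]

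Third, summing over $k$, bounding each prefactor below by $\Lambda_a=\min_{k\in\Z}|k-a|\,(|k-a|+1)$, and using the Parseval expression of $\iSdeux{|u|^2}$ on the right-hand side yields the claimed inequality. For the optimality of $\Lambda_a$, letting $k_\star$ realize the minimum in~\eqref{Lambdaa}, I would test with the ground state of the corresponding block, namely $u(z,\theta)=\(1-z^2\)^{|k_\star-a|/2}\,e^{i\,k_\star\theta}$, which saturates the bound.

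The hard part will not be the algebra, which is routine once the decomposition is in place, but the functional-analytic care at the poles $z=\pm1$: one must check that the one-dimensional weighted operators admit the correct self-adjoint realization, that their eigenvalues are exactly the increasing sequence $\lambda_{\ell,\mathsf a}$ so that $\ell=0$ genuinely gives the bottom of each block, and that the Fourier decomposition together with the extremal function $\(1-z^2\)^{|k_\star-a|/2}\,e^{i\,k_\star\theta}$ remains admissible in $\HA(\S^2)$. These points justify that passing to the infimum over $k$ loses nothing and that $\Lambda_a$ is indeed attained.
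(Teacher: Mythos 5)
Your proposal is correct and follows essentially the same route as the paper: a Fourier decomposition in $\theta$ reduces the quadratic form to the one-dimensional weighted problem~\eqref{Ev0} with $2\,\mathsf a=|k-a|$, whose spectrum $\lambda_{\ell,\mathsf a}=(\ell+|k-a|)(\ell+|k-a|+1)$ from~\eqref{LambdaEllA} gives the bound after minimizing over $\ell=0$ and $k\in\Z$. The paper writes out the full eigenfunction expansion in the basis $f_{\ell,|k-a|/2}(z)\,e^{ik\theta}$ rather than bounding each angular block by its ground state, but this is the same computation, and your explicit extremal $(1-z^2)^{|k_\star-a|/2}e^{ik_\star\theta}$ matches $f_{0,|k_\star-a|/2}$.
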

Notice that $\Lambda_a\le\Lambda_{1/2}=3/4$.
\begin{proof} We can write $u$ using a Fourier decomposition
\[
u(z,\theta)=\sum_{\ell\in\N}\sum_{k\in\Z}u_{k,\ell}(z)\,e^{i\,k\,\theta}
\]
and observe that
\[
|\nA u|^2=\sum_{\ell\in\N}\sum_{k\in\Z}\(\(1-z^2\)\left|u_{k,\ell}'(z)\right|^2+\frac{(k-a)^2}{1-z^2}\,\left|u_{k,\ell}(z)\right|^2\)
\]
where
\[
u_{k,\ell}(z)=f_{\ell,|k-a|/2}(z)\iSdeux{u(z,\theta)\,\frac{2\,f_{\ell,|k-a|/2}(z)\,e^{-i\,k\,\theta}}{\int_{-1}^1\(1-z^2\)^{|k-a|}\,dz}}
\]
and $f_{\ell,|k-a|/2}$ is an eigenfunction of~\eqref{Ev0} with eigenvalue $\lambda=\lambda_{\ell,\mathsf a}$ such that $2\,\mathsf a=|k-a|$. Using~\eqref{LambdaEllA}, we conclude that the spectrum of $-\,\LapA$ is given by
\[
\big(\ell+|k-a|\big)\,\big(\ell+|k-a|+1\big)\,,\quad k\in\Z\,,\;\ell\in\N\,.
\]
\end{proof}

\subsubsection{Superquadratic interpolation inequalities and consequences}
\begin{proposition}\label{prop:interpolation} Let $a\in\R$ and $p>2$. With the notation~\eqref{nablaS2}, there exists a concave monotone increasing function $\lambda\mapsto \mu_{a,p}(\lambda)$ on $(-\Lambda_a,+\infty)$ such that $\mu_{a,p}(\lambda)$ is the optimal constant in the inequality
\[
\nrmSdeux{\nA u}2^2+\lambda\,\nrmSdeux u2^2\ge\mu_{a,p}(\lambda)\,\nrmSdeux up^2\quad\forall\,u\in\HA(\S^2).
\]
Furthermore, $\mu_{a,p}(\lambda)\ge2\,(\lambda+\Lambda_a)/\big(2+(p-2)\,\Lambda_a\big)$ and $\lim_{\lambda\to-\Lambda_a}\mu_{a,p}(\lambda)=0$, with~$\Lambda_a$ given by~\eqref{Lambdaa}.
\end{proposition}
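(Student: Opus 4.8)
The plan is to realise $\mu_{a,p}(\lambda)$ as the infimum of a Rayleigh quotient,
\[
\mu_{a,p}(\lambda)=\inf_{u\in\HA(\S^2)\setminus\{0\}}\frac{\nrmSdeux{\nA u}2^2+\lambda\,\nrmSdeux u2^2}{\nrmSdeux up^2}\,,
\]
and to read off its properties from this representation. By Lemma~\ref{lem:gapp} the numerator is bounded below by $(\Lambda_a+\lambda)\,\nrmSdeux u2^2\ge0$ whenever $\lambda>-\Lambda_a$, so the quotient is well defined and $\mu_{a,p}(\lambda)\ge0$. For each fixed $u$ the quotient is affine in $\lambda$ with positive slope $\nrmSdeux u2^2/\nrmSdeux up^2$; hence $\mu_{a,p}$, an infimum of increasing affine functions, is automatically concave and non-decreasing, which settles the existence, concavity and monotonicity parts of the statement.

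For the lower bound I would interpolate between two extremal inequalities. The spectral gap of Lemma~\ref{lem:gapp} gives $\nrmSdeux{\nA u}2^2\ge\Lambda_a\,\nrmSdeux u2^2$, while the diamagnetic inequality $|\nA u|\ge|\nabla|u||$ combined with the non-magnetic inequality~\eqref{GNsph} for $d=2$ gives $\nrmSdeux{\nA u}2^2\ge\frac2{p-2}\big(\nrmSdeux up^2-\nrmSdeux u2^2\big)$. Taking the convex combination of these bounds with weights $t$ and $1-t$ and adding $\lambda\,\nrmSdeux u2^2$, I would choose $t$ so as to annihilate the resulting $\nrmSdeux u2^2$ term. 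This forces
\[
t=\frac{2-(p-2)\,\lambda}{2+(p-2)\,\Lambda_a}\,,\qquad\text{leaving}\qquad\nrmSdeux{\nA u}2^2+\lambda\,\nrmSdeux u2^2\ge\frac{2\,(\lambda+\Lambda_a)}{2+(p-2)\,\Lambda_a}\,\nrmSdeux up^2\,,
\]
which is exactly the claimed constant. The step that really needs checking is the admissibility $t\in[0,1]$: one has $t\le1$ iff $\lambda\ge-\Lambda_a$ (always), whereas $t\ge0$ holds precisely for $\lambda\le 2/(p-2)$, so this clean linear bound is produced on $(-\Lambda_a,2/(p-2)]$; for $\lambda\ge2/(p-2)$ the same scheme with $t=0$ only delivers the constant $2/(p-2)$.

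For the limiting value I would test the quotient against the ground state exhibited in the proof of Lemma~\ref{lem:gapp}, namely $u_\ast(z,\theta)=(1-z^2)^{|k_\ast-a|/2}\,e^{i\,k_\ast\theta}$, where $k_\ast\in\Z$ realises the minimum in~\eqref{Lambdaa}. It satisfies $\nrmSdeux{\nA u_\ast}2^2=\Lambda_a\,\nrmSdeux{u_\ast}2^2$ and belongs to every $\mathrm L^q(\S^2)$, whence
\[
0\le\mu_{a,p}(\lambda)\le(\Lambda_a+\lambda)\,\frac{\nrmSdeux{u_\ast}2^2}{\nrmSdeux{u_\ast}p^2}\xrightarrow[\;\lambda\to-\Lambda_a\;]{}0\,,
\]
so $\lim_{\lambda\to-\Lambda_a}\mu_{a,p}(\lambda)=0$ by squeezing.

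I expect the main obstacle to be the admissibility range of the interpolation weight $t$. Its breakdown for $\lambda>2/(p-2)$ is not merely an artefact of the method: concentrating a test function near a point of the equator, where the Aharonov-Bohm potential is a pure gauge and can be removed locally, reduces the computation to the non-magnetic Gagliardo-Nirenberg problem on $\R^2$ and suggests that $\mu_{a,p}(\lambda)$ grows only like $\lambda^{2/p}$ as $\lambda\to+\infty$. Consequently the linear lower bound is the substantive (and, one expects, sharp) one precisely in the low-$\lambda$, symmetric regime $\lambda\le 2/(p-2)$; pinning down the exact threshold below which equality holds, together with the ensuing symmetry-breaking analysis, is the genuinely delicate part of the theory and is the object of the companion optimality results.
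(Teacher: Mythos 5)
Your proof is correct and is essentially the paper's own argument: the same convex combination of the spectral gap of Lemma~\ref{lem:gapp} with the diamagnetic inequality and the non-magnetic inequality~\eqref{GNsph}, the same choice of $t$ (yielding the same constant), and the ground state of $-\,\LapA$ as test function for the limit $\lambda\to-\Lambda_a$. Your caveat that the linear lower bound is only obtained for $\lambda\le 2/(p-2)$ is shared by the paper's proof, which makes the same restriction explicitly.
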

\begin{proof} The proof is adapted from~\cite[Proposition~3.1]{MR3784917}. For an arbitrary $t\in(0,1)$, we can write that
\begin{align*}
\nrmSdeux{\nA u}2^2+\lambda\,\nrmSdeux u2^2\ge&\,t\(\nrmSdeux{\nA u}2^2-\Lambda_a\,\nrmSdeux u2^2\)\\
&+(1-t)\(\nrmSdeux{\nabla|u|}2^2+\frac{\lambda+t\,\Lambda_a}{1-t}\,\nrmSdeux u2^2\)\\
&\kern12pt\ge(1-t)\,\mu_{0,p}\(\frac{\lambda+t\,\Lambda_a}{1-t}\)\,\nrmSdeux up^2\,,
\end{align*}
as a consequence of Lemma~\ref{lem:gapp} and of the \emph{diamagnetic inequality} (see e.g.~\cite[Theorem~7.21]{MR1817225})
\[
\nrmSdeux{\nA u}2^2\ge\nrmSdeux{\nabla|u|}2^2\,.
\]
If $\lambda<2/(p-2)$, the estimate is obtained by choosing $t$ such that
\[
\frac{\lambda+t\,\Lambda_a}{1-t}=\frac2{p-2}
\]
and recalling that $\mu_{0,p}\(2/(p-2)\)=2/(p-2)$. The limit as $\lambda\to-\Lambda_a$ is obtained by taking the ground state of $-\LapA$ on $\mathrm H^1(\S^2)$ as test function. \end{proof}

With the same method as for the proof of~\eqref{Ineq:KLT1}, we can deduce a Hardy-type inequality.
\begin{corollary}\label{Cor:KLT22} Let $a\in\R$, $p>2$ and $q=p/(p-2)$. With the notation~\eqref{nablaS2}, if~$\pot$ is a non-trivial potential in $\mathrm L^q(\S^2)$, then
\[
\nrmSdeux{\nA u}2^2\ge\frac{\mu_{a,p}(0)}{\nrmSdeux \pot q}\iSdeux{\pot\,|u|^2}\quad\forall\,u\in\HA(\S^2)\,.
\]
\end{corollary}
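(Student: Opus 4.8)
The plan is to imitate the derivation of the magnetic Hardy inequality~\eqref{Ineq:KLT1} on $\S^1$, simply replacing the one-dimensional interpolation inequality~\eqref{Ineq:Interp0} by its two-dimensional counterpart established in Proposition~\ref{prop:interpolation}. The whole argument is a combination of H\"older's inequality with the optimal interpolation inequality taken at $\lambda=0$, and no symmetry or rigidity analysis is needed here since that work has already been absorbed into Proposition~\ref{prop:interpolation}.

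First I would control the potential term by the $\mathrm L^p$ norm of $u$. Since $q=p/(p-2)$ has conjugate exponent $p/2$, H\"older's inequality gives
\[
\iSdeux{\pot\,|u|^2}\le\iSdeux{|\pot|\,|u|^2}\le\nrmSdeux\pot q\,\nrmSdeux{|u|^2}{p/2}=\nrmSdeux\pot q\,\nrmSdeux up^2\,,
\]
where I used $\nrmSdeux{|u|^2}{p/2}=\nrmSdeux up^2$ and $\nrmSdeux{|\pot|}q=\nrmSdeux\pot q$. Note that this bound needs no sign assumption on $\pot$, which is consistent with the fact that the corollary only requires $\pot$ to be a non-trivial element of $\mathrm L^q(\S^2)$.

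Next I would invoke Proposition~\ref{prop:interpolation} with $\lambda=0$, which is admissible because $0\in(-\Lambda_a,+\infty)$ as soon as $a\notin\Z$, to obtain $\nrmSdeux{\nA u}2^2\ge\mu_{a,p}(0)\,\nrmSdeux up^2$; the explicit lower bound $\mu_{a,p}(0)\ge2\,\Lambda_a/\big(2+(p-2)\,\Lambda_a\big)$ furnished by the proposition guarantees $\mu_{a,p}(0)>0$ in this range. Combining this with the H\"older estimate and rearranging yields
\[
\nrmSdeux{\nA u}2^2\ge\mu_{a,p}(0)\,\nrmSdeux up^2\ge\frac{\mu_{a,p}(0)}{\nrmSdeux\pot q}\iSdeux{\pot\,|u|^2}\,,
\]
which is precisely the claimed inequality.

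Since Proposition~\ref{prop:interpolation} is available, the argument is essentially routine and I expect no serious obstacle. The only point demanding a word of care is the borderline case $a\in\Z$: there $\Lambda_a=0$, the value $\lambda=0$ sits on the boundary of the admissible interval, and $\mu_{a,p}(0)=\lim_{\lambda\to-\Lambda_a}\mu_{a,p}(\lambda)=0$ by the limit recorded in Proposition~\ref{prop:interpolation}. In that case the right-hand side vanishes and the inequality holds trivially, in agreement with the fact that an integer flux can be gauged away. The quantitative content of the corollary is thus genuinely carried by the non-integer fluxes, where $\mu_{a,p}(0)>0$.
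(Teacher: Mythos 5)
Your proposal is correct and follows exactly the route the paper intends: the paper proves this corollary by the "same method as for~\eqref{Ineq:KLT1}", namely H\"older's inequality with exponents $q=p/(p-2)$ and $p/2$ followed by Proposition~\ref{prop:interpolation} at $\lambda=0$, which is precisely your argument. Your additional remark on the degenerate case $a\in\Z$, where $\Lambda_a=0$ and $\mu_{a,p}(0)=0$ render the inequality trivial, is a correct and welcome clarification.
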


\section{Subquadratic magnetic interpolation inequalities}\label{Sec:plessthan2}

This section is devoted to new and optimal interpolation inequalities involving $\mathrm L^p$ norms in the \emph{subquadratic} range $p\in(1,2)$ on the circle and on the torus (Sections~\ref{Sec:magnetic rings $p<2$} and~\ref{Sec:flat}). Dual Keller-Lieb-Thirring inequalities yield new magnetic Hardy inequalities on $\R^2$ and $\R^3$ (Section~\ref{Sec:Consequences}).

\subsection{Magnetic rings: subquadratic interpolation inequalities on \texorpdfstring{$\S^1$}{S1}}\label{Sec:magnetic rings $p<2$}

We extend the results of Section~\ref{SubSec:MR} to the subquadratic range $1<p<2$ using the strategy of~\cite{doi:10.1063/1.5022121}.

\subsubsection{Statement of the inequality}
As a special case of~\eqref{GNsph} corresponding to $d=1$, we have the non-magnetic interpolation inequality
\be{Ineq:InterpZero}
(2-p)\,\nrmS{u'}2^2+\nrmS u p^2\ge\nrmS u2^2\quad\forall\,u\in\mathrm H^1(\S^1)
\ee
for any $p\in[1,2)$. Our first result is the magnetic counterpart of this inequality.
\begin{lemma}\label{Lem:SubMagInterp} Let $a\in\R$ and $p\in[1,2)$. Then there exists a concave monotone increasing function $\mu\mapsto\lambda_{a,p}(\mu)$ on $\R^+$ such that
\be{Ineq:Interp}
\nrmS{\psi'-i\,a\,\psi}2^2+\mu\,\nrmS\psi p^2\ge\lambda_{a,p}(\mu)\,\nrmS\psi2^2\quad\forall\,\psi\in\mathrm H^1(\S^1,\mathbb C)\,.
\ee\end{lemma}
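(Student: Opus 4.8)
The plan is to define $\lambda_{a,p}(\mu)$ directly as the optimal, i.e.\ largest admissible, constant, by setting
\[
\lambda_{a,p}(\mu):=\inf\Big\{\nrmS{\psi'-i\,a\,\psi}2^2+\mu\,\nrmS\psi p^2\,:\,\psi\in\mathrm H^1(\S^1,\mathbb C),\ \nrmS\psi2=1\Big\}\,,
\]
so that~\eqref{Ineq:Interp} holds for every $\psi$ by homogeneity and by the very definition of the infimum; it then remains only to check that this function is finite, positive, concave and increasing on $\R^+$. Testing with the constant $\psi\equiv1$ gives the upper bound $\lambda_{a,p}(\mu)\le a^2+\mu$. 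For the lower bound I would combine the diamagnetic inequality $\nrmS{\psi'-i\,a\,\psi}2^2\ge\nrmS{(|\psi|)'}2^2$ (see~\cite[Theorem~7.21]{MR1817225}) with the non-magnetic inequality~\eqref{Sublinear} in the case $d=1$ applied to $u=|\psi|$, for which $\nrmS\psi p=\nrmS{|\psi|}p$ and $\nrmS\psi2=\nrmS{|\psi|}2$. This yields $\lambda_{a,p}(\mu)\ge\lambda_{0,p}(\mu)>0$ for every $\mu>0$ and every $a\in\R$, and hence $0<\lambda_{0,p}(\mu)\le\lambda_{a,p}(\mu)\le a^2+\mu<\infty$.

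Concavity and monotonicity then come essentially for free from this variational definition. For each fixed admissible $\psi$ the map $\mu\mapsto\nrmS{\psi'-i\,a\,\psi}2^2+\mu\,\nrmS\psi p^2$ is affine with nonnegative slope $\nrmS\psi p^2\ge0$, hence nondecreasing; since $\lambda_{a,p}$ is the pointwise infimum of this family of affine nondecreasing functions, it is automatically concave and nondecreasing. To upgrade this to strict monotonicity I would use a minimizer $\psi_\mu$ at level $\mu$ (produced in the next step): as $\psi_\mu\ne0$ forces $\nrmS{\psi_\mu}p^2>0$, evaluating the affine function attached to $\psi_\mu$ at any smaller argument $\mu'<\mu$ gives $\lambda_{a,p}(\mu')\le\nrmS{\psi_\mu'-i\,a\,\psi_\mu}2^2+\mu'\,\nrmS{\psi_\mu}p^2<\lambda_{a,p}(\mu)$.

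The only genuine analytic step is the existence of an optimizer, which I expect to be routine because $\S^1$ is compact and one-dimensional. Given a minimizing sequence $(\psi_n)$ with $\nrmS{\psi_n}2=1$, the boundedness of the energy controls $\nrmS{\psi_n'-i\,a\,\psi_n}2$, and then $\nrmS{\psi_n'}2\le\nrmS{\psi_n'-i\,a\,\psi_n}2+|a|$ shows that $(\psi_n)$ is bounded in $\mathrm H^1(\S^1,\mathbb C)$. Since $\mathrm H^1(\S^1)$ embeds compactly into $C^0(\S^1)$, and thus into $\mathrm L^2(\S^1)$ and $\mathrm L^p(\S^1)$, I would extract a subsequence converging weakly in $\mathrm H^1$ and strongly in $\mathrm L^2$ and $\mathrm L^p$. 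The constraint $\nrmS\psi2=1$ and the value $\nrmS\psi p^2$ pass to the limit, while the magnetic Dirichlet form is weakly lower semicontinuous, being the square of the $\mathrm L^2$ norm of the weakly continuous linear image $\psi\mapsto\psi'-i\,a\,\psi$. The limit is therefore a minimizer, so the optimal constant is attained.

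The delicate points of the theory do not arise at this level, and this is the key observation about where the difficulty lies: the sharp threshold in $\mu$ below which constants are optimal, together with the associated symmetry and rigidity, is the content of the later optimality statement (Theorem~\ref{prop:rigiditypless2}), and the care needed to relate the complex inequality~\eqref{Ineq:Interp} to its real reformulation when $\psi$ vanishes is likewise postponed. Working here directly with complex-valued $\psi$ keeps the argument soft and sidesteps vanishing entirely. Finally, exactly as after Proposition~\ref{prop:rigidity}, the gauge changes $\psi\mapsto e^{i\,k\,\theta}\,\psi$ and $\psi\mapsto e^{-i\,\theta}\,\overline\psi$ give $\lambda_{a+k,p}=\lambda_{a,p}$ and $\lambda_{a,p}=\lambda_{1-a,p}$, so that no generality is lost in eventually restricting to $a\in[0,1/2]$.
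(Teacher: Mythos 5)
Your proof is correct and follows essentially the same route as the paper: define $\lambda_{a,p}(\mu)$ as the optimal constant, obtain positivity from the diamagnetic inequality combined with the non-magnetic $d=1$ interpolation inequality, and get concavity and monotonicity from the representation as an infimum of affine nondecreasing functions of $\mu$. The additional material on existence of an optimizer and strict monotonicity is sound but duplicates what the paper defers to its Lemma~\ref{Lem:Existence}.
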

Here we denote by $\lambda_{a,p}(\mu)$ the optimal constant in~\eqref{Ineq:InterpZero}.
\begin{proof} The existence of $\lambda_{a,p}(\mu)$ is a consequence of~\eqref{Ineq:InterpZero} and of the \emph{diamagnetic inequality}: let $\rho=|\psi|$ and $\phi$ be such that $\psi=\rho\big(\theta)\,\exp(i\,\phi(\theta)\big)$. Since
\[
|\psi'-i\,a\,\psi|^2=|\rho'|^2+|\phi'-a|^2\,\rho^2\ge|\rho'|^2\,,
\]
we have that $\nrmS{\psi'-i\,a\,\psi}2^2\ge\nrmS{\,|\psi|'\,}2^2$. The concavity of $\mu\mapsto\lambda_{a,p}(\mu)$ is a consequence of the definition of $\lambda_{a,p}(\mu)$ as the optimal constant, \emph{i.e.}, the infimum on $\mathrm H^1(\S^1)\ni\psi$ of an affine function of $\mu$.\end{proof}

\subsubsection{Existence of an optimal function}
\begin{lemma}\label{Lem:Existence} For all $a\in [0, 1/2]$, $p\in[1,2)$ and $\mu\ge-\,a^2$, equality in~\eqref{Ineq:Interp} is achieved by at least one function in $\mathrm H^1(\S^1)$.\end{lemma}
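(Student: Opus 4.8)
Lemma~\ref{Lem:Existence} asserts that for $a\in[0,1/2]$, $p\in[1,2)$ and $\mu\ge-a^2$, the optimal constant $\lambda_{a,p}(\mu)$ in~\eqref{Ineq:Interp} is attained by some $\psi\in\mathrm H^1(\S^1)$.

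**The plan.** The natural approach is the direct method of the calculus of variations. I would realize $\lambda_{a,p}(\mu)$ as the infimum of the quotient
\[
\mathcal Q[\psi]:=\frac{\nrmS{\psi'-i\,a\,\psi}2^2+\mu\,\nrmS\psi p^2}{\nrmS\psi2^2}
\]
over $\psi\in\mathrm H^1(\S^1,\mathbb C)\setminus\{0\}$, which is scale invariant, so I may normalize $\nrmS\psi 2=1$ and minimize the numerator. The first step is to take a minimizing sequence $(\psi_n)$ with $\nrmS{\psi_n}2=1$ and establish a uniform $\mathrm H^1$ bound. The key point here is that, by the diamagnetic computation already recorded in the proof of Lemma~\ref{Lem:SubMagInterp}, writing $\psi=\rho\,e^{i\phi}$ gives $|\psi'-i\,a\,\psi|^2=|\rho'|^2+|\phi'-a|^2\,\rho^2$, so control of $\nrmS{\psi'-i\,a\,\psi}2^2$ together with the normalization controls $\nrmS{\rho'}2^2$ and hence $\nrmS{\psi}{\mathrm H^1}$ when $\mu\ge0$. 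The condition $\mu\ge-a^2$ is exactly what is needed to absorb a possibly negative contribution $\mu\,\nrmS\psi p^2$: using inequality~\eqref{Ineq:Interp} itself (or directly the magnetic Poincaré bound $\nrmS{\psi'-i\,a\,\psi}2^2\ge a^2\,\nrmS\psi2^2$ coming from the lowest Fourier mode) shows the numerator stays bounded below and the full magnetic energy stays bounded above along the sequence.

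**Extraction and lower semicontinuity.** Second, from the uniform bound I extract a subsequence with $\psi_n\rightharpoonup\psi$ weakly in $\mathrm H^1(\S^1)$; by the compact Sobolev embedding $\mathrm H^1(\S^1)\hookrightarrow\mathrm L^q(\S^1)$ for every finite $q$ (and into $C^{0,1/2}$), the convergence is strong in $\mathrm L^2$ and in $\mathrm L^p$. Strong $\mathrm L^2$ convergence gives $\nrmS\psi2=1$, so the limit is nontrivial, and strong $\mathrm L^p$ convergence gives $\nrmS{\psi_n}p^2\to\nrmS\psi p^2$, handling the nonlinear term (including its sign). For the magnetic Dirichlet term I would note that $\psi\mapsto\nrmS{\psi'-i\,a\,\psi}2^2$ is a nonnegative quadratic form that is weakly lower semicontinuous on $\mathrm H^1$ (the zeroth-order perturbation $-i\,a\,\psi$ converges strongly in $\mathrm L^2$, and $\psi_n'\rightharpoonup\psi'$ weakly, so $\nrmS{\psi'-i\,a\,\psi}2\le\liminf\nrmS{\psi_n'-i\,a\,\psi_n}2$). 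Combining, $\mathcal Q[\psi]\le\liminf\mathcal Q[\psi_n]=\lambda_{a,p}(\mu)$, and since $\lambda_{a,p}(\mu)$ is the infimum, equality holds: $\psi$ is an optimal function.

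**Main obstacle.** The delicate step is the coercivity in the regime $\mu\in[-a^2,0)$, where the $\mathrm L^p$ term lowers the energy and one must rule out that a minimizing sequence either loses mass or drives the numerator to $-\infty$. The restriction $a\in[0,1/2]$ and $\mu\ge-a^2$ is precisely the threshold guaranteeing that the quadratic form dominates, so the argument must invoke the sharp magnetic Poincaré constant; I would make this quantitative by bounding $\mu\,\nrmS\psi p^2$ below using $\nrmS\psi p\le\nrmS\psi2=1$ (valid since $p<2$ on a probability space) so that the numerator is at least $a^2+\mu\ge0$, which both bounds the infimum below and, fed back through the diamagnetic splitting, yields the $\mathrm H^1$ bound. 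No vanishing or dichotomy can occur because $\S^1$ is compact and the embedding into $\mathrm L^p$ is compact, so concentration-compactness is not needed; the compactness is automatic once the uniform $\mathrm H^1$ bound is in hand.
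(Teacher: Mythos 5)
Your proposal is correct and follows essentially the same route as the paper: a minimizing sequence under the normalization $\nrmS\psi2=1$, an $\mathrm H^1$ bound via the diamagnetic inequality, compactness of the Sobolev embedding to pass to the limit in the $\mathrm L^2$ and $\mathrm L^p$ terms, and lower semicontinuity of the magnetic Dirichlet form. Your extra care with coercivity in the regime $\mu\in[-a^2,0)$ (using $\nrmS\psi p\le\nrmS\psi2$ on the probability space together with the magnetic Poincar\'e bound) is a worthwhile detail that the paper leaves implicit.
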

\begin{proof} We consider a minimizing sequence $\{\psi_n\}$ for
\[
\lambda_{a,p}(\mu)=\inf\Big\{\nrmS{\psi'-i\,a\,\psi}2^2+\mu\,\nrmS\psi p^2\,:\,\psi\in\mathrm H^1(\S^1)\,,\;\nrmS\psi2=1\Big\}\,.
\]
By the diamagnetic inequality we know that the sequence $(\psi_n)_{n\in\N}$ is bounded in $\mathrm H^1(\S^1)$. By the compact Sobolev embeddings, this sequence is relatively compact in $\mathrm L^p(\S^1)$ and in $\mathrm L^2(\S^1)$. The map $\psi\mapsto\nrmS{\psi'-i\,a\,\psi}2^2$ is lower semicontinuous by Fatou's lemma, which proves the claim.\end{proof}

\subsubsection{A non-vanishing property}
\begin{lemma}\label{Lem:NonVanishing} Asssume that $a\in (0, 1/2)$, $p\in[1,2)$ and $\mu\ge-\,a^2$. If $\psi\in\mathrm H^1(\S^1)$ is an optimal function for~\eqref{Ineq:Interp} with $\nrmS\psi p=1$, then $\psi(s)\neq0$ for any $s\in\S^1$.\end{lemma}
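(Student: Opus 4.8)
The plan is to prove the non-vanishing property by contradiction, exploiting the magnetic phase term that distinguishes the problem from the non-magnetic one. Suppose $\psi$ is an optimal function for~\eqref{Ineq:Interp} with $\nrmS\psi p=1$ and that $\psi(s_0)=0$ for some $s_0\in\S^1$. Writing $\psi=\rho\,e^{i\phi}$ with $\rho=|\psi|$, the identity $|\psi'-i\,a\,\psi|^2=|\rho'|^2+|\phi'-a|^2\,\rho^2$ from the proof of Lemma~\ref{Lem:SubMagInterp} shows that the energy splits into a radial part $\nrmS{\rho'}2^2$ and a phase part $\iS{|\phi'-a|^2\,\rho^2}$. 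The key observation is that if $\psi$ vanishes at a point, then the global winding constraint coming from the $2\pi$-periodicity of $\psi$ (equivalently, the quantization $\frac1{2\pi}\iS{\phi'}\in\Z$) is relaxed locally, and one can redistribute the phase to strictly lower the magnetic energy while keeping $\rho=|\psi|$ — and hence both $\nrmS\psi2$ and $\nrmS\psi p$ — unchanged. This would contradict optimality.

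First I would make precise the comparison between $\psi$ and the non-negative function $\rho=|\psi|$. By the diamagnetic inequality, $\rho\in\mathrm H^1(\S^1)$ and $\nrmS{\rho'}2^2\le\nrmS{\psi'-i\,a\,\psi}2^2$, with the $\mathrm L^2$ and $\mathrm L^p$ norms preserved. If $\psi$ vanished at some point, the strategy is to show the diamagnetic inequality is \emph{strict}, i.e. the phase contribution $\iS{|\phi'-a|^2\,\rho^2}$ cannot be made to vanish. The hard part will be handling the phase $\phi$ at the zero set of $\psi$: where $\rho=0$ the phase is ill-defined, so I would work on the open set $\{\rho>0\}$, which (if $\psi$ is not identically zero) is a union of arcs, and analyze the optimal phase on each arc by solving the one-dimensional minimization of $\iS{|\phi'-a|^2\,\rho^2}$ subject to the boundary behavior forced by continuity of $\psi$. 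On an arc where $\rho$ vanishes only at its endpoints, the infimum of the phase energy can be made arbitrarily small by choosing $\phi'=a$ in the interior, because the constraint that $\phi$ jump by a multiple of $2\pi$ across a zero of $\psi$ is vacuous — $\psi$ is already zero there, so no winding is enforced. Thus one can construct a competitor $\tilde\psi=\rho\,e^{i\tilde\phi}$ with the same modulus but with magnetic energy arbitrarily close to $\nrmS{\rho'}2^2$.

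The comparison then forces $\lambda_{a,p}(\mu)$ to coincide with the non-magnetic constant $\lambda_{0,p}(\mu)$, which by the explicit computation of Lemma~\ref{Lem:SubMagInterp} and~\eqref{Ineq:InterpZero} is governed by the case $a=0$; but for $a\in(0,1/2)$ strictly, the optimal constant must satisfy $\lambda_{a,p}(\mu)>\lambda_{0,p}(\mu)$ (the flux raises the ground-state energy, as reflected in $\Lambda_a>0$ and in Proposition~\ref{prop:rigidity}), giving a contradiction. I expect the main obstacle to be the rigorous treatment of the phase redistribution near the zeros of $\psi$: one must ensure that the modified phase yields a genuine $\mathrm H^1(\S^1)$ function (in particular that $\tilde\psi$ remains $2\pi$-periodic and continuous across the arcs), and that the infimum of the phase energy is actually \emph{not attained} unless $\psi$ is nonvanishing, which is what converts ``energy arbitrarily close'' into a strict inequality. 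A cleaner route, which I would pursue in parallel, is to invoke the Euler–Lagrange equation: an optimal $\psi$ solves a magnetic ODE of the form $-(\psi'-i\,a\,\psi)'-i\,a\,(\psi'-i\,a\,\psi)+\mu\,\nrmS\psi p^{2-p}\,|\psi|^{p-2}\psi=\lambda_{a,p}(\mu)\,\psi$, and a unique-continuation / ODE-uniqueness argument for this first-order-perturbed second-order equation shows that a solution vanishing at one point (together with the compatible derivative condition forced by the nonlinearity) must vanish identically, contradicting $\nrmS\psi p=1$.
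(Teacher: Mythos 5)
Neither of your two routes closes, and the gaps are in the decisive steps. For the variational route: the gauge construction near a zero is correct (it is in fact the reason the statement \emph{fails} at $a=1/2$, where the proof of Lemma~\ref{lem:reduction} notes that the minimizer lies in $\mathrm H^1_0(\S^1)$ and does vanish), but the conclusion you draw from it is not. If $\psi$ vanishes at $s_0$ and $\rho=|\psi|$, the competitor $\tilde\psi(s)=\rho(s)\,e^{ia(s-s_0)}$, with the phase jump hidden at the zero, shows that $\lambda_{a,p}(\mu)$ equals the non-magnetic quotient of $\rho$; but this only identifies $\lambda_{a,p}(\mu)$ with the infimum of the non-magnetic quotient over functions \emph{constrained to vanish at a point}, which is in general strictly larger than $\lambda_{0,p}(\mu)$ (vanishing at a point is a genuine codimension-type constraint in $\mathrm H^1(\S^1)\hookrightarrow C^{0,1/2}(\S^1)$, and the unconstrained optimizers are positive). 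So you do not obtain $\lambda_{a,p}(\mu)=\lambda_{0,p}(\mu)$, and the inequality $\lambda_{a,p}(\mu)\ge\lambda_{0,p}(\mu)$, which is all that diamagnetism gives, is no contradiction. To finish along these lines you would need something like $\lambda_{a,p}(\mu)<\lambda_{1/2,p}(\mu)$ for $a<1/2$, i.e.\ strict monotonicity in $a$, which you neither state nor prove. There is also an internal tension in the write-up: you first announce that the phase energy ``cannot be made to vanish'' when $\psi$ has a zero, and then construct a competitor showing precisely that it can.

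The ODE route has a different problem: for $p\in(1,2)$ the nonlinearity $|\psi|^{p-2}\psi$ is only H\"older continuous at $0$, so Cauchy--Lipschitz uniqueness is unavailable exactly at the zeros you want to rule out (this is the classical dead-core phenomenon for sublinear equations); moreover $\psi(s_0)=0$ is only half a Cauchy datum, and nothing forces $\psi'(s_0)=0$. The paper's proof avoids both difficulties. Setting $v(s)=\psi(s)\,e^{ias}$, the real and imaginary parts $v_1,v_2$ solve the same equation $-\,v_j''-\mu\,(v_1^2+v_2^2)^{\frac p2-1}v_j=\lambda_{a,p}(\mu)\,v_j$, whose right-hand side is continuous because $|v|^{p-2}v_j$ is bounded by $|v|^{p-1}$; hence the Wronskian $w=v_1v_2'-v_1'v_2$ is \emph{constant}, with no uniqueness theorem invoked. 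If $v_1$ and $v_2$ vanished simultaneously, then $w\equiv0$, forcing $v_1$ and $v_2$ to be proportional, i.e.\ $v$ to have a constant phase; this is incompatible with the twisted periodicity $v(s+2\pi)=e^{2\pi ia}\,v(s)$ when $a\in(0,1/2)$. That last step is where the hypothesis $a\neq0,\tfrac12$ enters, a point your argument never isolates.
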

\begin{proof} The proof goes as in~\cite{doi:10.1063/1.5022121}. Let us decompose $v(s)=\psi(s)\,e^{ias}$ as a real and an imaginary part, respectively $v_1$ and $v_2$, which both solve the same Euler-Lagrange equation
\[
-\,v_j''-\,\mu\,\big(v_1^2+v_2^2\big)^{\frac p2-1}\,v_j=\lambda_{a,p}(\mu)\,v_j\,,\quad j=1\,,\;2\,.
\]
Notice that $v\in C^{0,1/2}(\S^1)$ and the nonlinear term is continuous, hence $v$ is smooth. The Wronskian $w=(v_1\,v_2'-\,v_1'\,v_2)$ is constant. If both $v_1$ and $v_2$ vanish at the same point, then $w$ vanishes identically, which means that $v_1$ and $v_2$ are proportional. With $a\in (0, 1/2)$, $\psi$ is not $2\pi$-periodic, a contradiction.\end{proof}

\subsubsection{A reduction to a scalar minimization problem}
We refer to Section~\ref{Sec:withoutWeights} if $a=0$ and assume in the proofs that $a>0$. The main steps of the reduction are similar to the case $p>2$ of~\cite{doi:10.1063/1.5022121}. We repeat the key points for completeness. Let us define
\[
\mathcal Q_{a,p,\mu}[u]:=\frac{\nrmS{u'}2^2+a^2\,\nrmS{u^{-1}}2^{-2}+\mu\,\nrmS up^2}{\nrmS u2^2}\,.
\]
Notice that if $u\in\mathrm H^1(\S^1)$ is such that $u(s_0)=0$ for some $s_0\in(-\pi,\pi]$, then
\[
|u(s)|^2=\(\int_{s_0}^su'\,ds\)^2\le\sqrt{2\pi}\,\nrmS{u'}2\,\sqrt{|s-s_0|}
\]
and $u^{-2}$ is not integrable. In this case, as mentioned earlier, we adopt the convention that
\be{extendedexpression}
\mathcal Q_{a,p,\mu}[u]:=\frac{\nrmS{u'}2^2+\mu\,\nrmS up^2}{\nrmS u2^2}\,.
\ee
\begin{lemma}\label{lem:reduction} For any $a\in[0,1/2]$, $p\in[1,2)$, $\mu>-\,a^2$,
\[
\lambda_{a,p}(\mu)=\min_{u\in\mathrm H^1(\S^1)\setminus\{0\}}\mathcal Q_{a,p,\mu}[u]\,.
\]
\end{lemma}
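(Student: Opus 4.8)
The plan is to eliminate the phase and reduce the complex minimization defining $\lambda_{a,p}(\mu)$ to the scalar functional $\mathcal Q_{a,p,\mu}$, treating separately the functions that do not vanish from those that do. Recall from Lemma~\ref{Lem:SubMagInterp} that $\lambda_{a,p}(\mu)$ is the infimum over $\psi\in\mathrm H^1(\S^1,\mathbb C)\setminus\{0\}$ of $\mathcal R[\psi]:=\big(\nrmS{\psi'-i\,a\,\psi}2^2+\mu\,\nrmS\psi p^2\big)/\nrmS\psi2^2$. Writing $\psi=\rho\,e^{i\phi}$ with $\rho=|\psi|$, one has $|\psi'-i\,a\,\psi|^2=|\rho'|^2+(\phi'-a)^2\,\rho^2$, while single-valuedness of $e^{i\phi}$ on $\S^1$ forces $\frac1{2\pi}\iS{\phi'}\in\Z$. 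Since $\nrmS\psi p=\nrmS\rho p$ and $\nrmS\psi2=\nrmS\rho2$, everything reduces to comparing the magnetic kinetic term with $\nrmS{\rho'}2^2+a^2\,\nrmS{\rho^{-1}}2^{-2}$.

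Step A (lower bound $\lambda_{a,p}(\mu)\ge\inf_u\mathcal Q_{a,p,\mu}[u]$). If $\psi$ does not vanish, then $\rho>0$ is bounded below and Cauchy--Schwarz gives
\[
(k-a)^2=\Big(\tfrac1{2\pi}\iS{(\phi'-a)}\Big)^2\le\tfrac1{2\pi}\iS{(\phi'-a)^2\,\rho^2}\cdot\nrmS{\rho^{-1}}2^2
\]
for the integer $k=\frac1{2\pi}\iS{\phi'}$; the hypothesis $a\in[0,1/2]$ makes $k=0$ optimal, so $\frac1{2\pi}\int_{\S^1}(\phi'-a)^2\rho^2\,d\theta\ge a^2\,\nrmS{\rho^{-1}}2^{-2}$ and $\mathcal R[\psi]\ge\mathcal Q_{a,p,\mu}[\rho]$. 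If $\psi$ vanishes, then $\rho^{-2}$ is not integrable, so the convention~\eqref{extendedexpression} drops the singular term, and the diamagnetic inequality $\nrmS{\psi'-i\,a\,\psi}2\ge\nrmS{\rho'}2$ again yields $\mathcal R[\psi]\ge\mathcal Q_{a,p,\mu}[\rho]$. Taking the infimum over $\psi$ proves the bound.

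Step B (upper bound). Given a real $u$, I would produce competitors whose quotient approaches $\mathcal Q_{a,p,\mu}[u]$. When $u>0$, choosing $\phi$ with $\phi'-a=c\,u^{-2}$, the constant $c$ fixed by $\iS{\phi'}=0$, realizes the equality case of the Cauchy--Schwarz step, so $\psi=u\,e^{i\phi}$ satisfies $\mathcal R[\psi]=\mathcal Q_{a,p,\mu}[u]$. When $u$ vanishes at some $\theta_0$, I would use a recovery sequence $\psi_n=u\,e^{i\phi_n}$ with $\phi_n'=a$ outside a shrinking interval $I_n=(\theta_0-\delta_n,\theta_0+\delta_n)$ and the winding needed for single-valuedness concentrated inside $I_n$; there $|\phi_n'-a|\lesssim\delta_n^{-1}$ while $u^2\lesssim\delta_n^2$, so $\int_{I_n}(\phi_n'-a)^2\,u^2\,d\theta\to0$, whence $\mathcal R[\psi_n]\to\mathcal Q_{a,p,\mu}[u]$. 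This gives $\lambda_{a,p}(\mu)\le\inf_u\mathcal Q_{a,p,\mu}[u]$, and combined with Step A the two values coincide.

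It remains to upgrade the infimum to a minimum. Let $\psi_\star$ be an optimal function for~\eqref{Ineq:Interp}, which exists by Lemma~\ref{Lem:Existence}, and set $u_\star:=|\psi_\star|$. By Step A, $\lambda_{a,p}(\mu)=\mathcal R[\psi_\star]\ge\mathcal Q_{a,p,\mu}[u_\star]\ge\inf_u\mathcal Q_{a,p,\mu}[u]=\lambda_{a,p}(\mu)$, so $u_\star$ is a minimizer (for $a\in(0,1/2)$, Lemma~\ref{Lem:NonVanishing} moreover ensures $u_\star>0$, while $a=0$ is covered by Section~\ref{Sec:withoutWeights}). I expect the main obstacle to be the vanishing case: one must confirm that the lower bound is consistent with the convention~\eqref{extendedexpression}, and, above all, that the winding imposed by periodicity can be absorbed near the zeros of $u$ at asymptotically no cost — precisely the role of the concentrated recovery sequence in Step B.
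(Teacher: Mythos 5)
Your proof is correct in substance but organized quite differently from the paper's. The paper gauges away the flux by setting $v(s)=\psi(s)\,e^{ias}$, minimizes over pairs $(u,\phi)$ subject to the twisted boundary condition $\phi(\pi)=2\pi(a+k)+\phi(-\pi)$, and then reads off the optimal phase from the Euler--Lagrange equation $(\phi'\,u^2)'=0$ (using Lemma~\ref{Lem:NonVanishing} to integrate it as $\phi'=L/u^2$), which yields $\nrmS{u\,\phi'}2^2=(a+k)^2\,\nrmS{u^{-1}}2^{-2}$ and hence $\lambda_{a,p}(\mu)=\min_{u,k}\mathcal Q_{a+k,p,\mu}[u]$ with $k=0$ optimal. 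You instead prove the two inequalities by hand: the lower bound via the Cauchy--Schwarz inequality $(k-a)^2\le\big(\int(\phi'-a)^2\rho^2\,d\sigma\big)\nrmS{\rho^{-1}}2^2$ and the upper bound via the explicit competitor $\phi'-a=c\,u^{-2}$, which is exactly the equality case of that same Cauchy--Schwarz step. The underlying computation (optimal phase $\phi'\propto u^{-2}$) is identical, but your version avoids invoking the Euler--Lagrange system, treats the vanishing case of the upper bound explicitly via a recovery sequence (something the paper only addresses implicitly through the $a=1/2$ limit argument), and makes the ``$\le$'' direction for arbitrary $u$ more transparent. The final upgrade from infimum to minimum via $u_\star=|\psi_\star|$ and Lemma~\ref{Lem:Existence} is clean and correct.

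One estimate in your recovery sequence is wrong as stated: for $u\in\mathrm H^1(\S^1)$ with $u(\theta_0)=0$ you only have $u\in C^{0,1/2}$, so $u^2\lesssim\delta_n$ on $I_n$, not $u^2\lesssim\delta_n^2$; with your bound $|\phi_n'-a|\lesssim\delta_n^{-1}$ this gives $\int_{I_n}(\phi_n'-a)^2u^2\,d\theta\lesssim\delta_n^{-2}\cdot\delta_n\cdot\delta_n=O(1)$, which does not tend to zero. The step is repairable: the local bound $u^2(\theta)\le|\theta-\theta_0|\int_{I_n}|u'|^2\,d\theta$ gives $\int_{I_n}(\phi_n'-a)^2u^2\,d\theta\lesssim\int_{I_n}|u'|^2\,d\theta\to0$ by absolute continuity of the Dirichlet integral, so the conclusion $\mathcal R[\psi_n]\to\mathcal Q_{a,p,\mu}[u]$ stands, but you need this refined estimate rather than a pointwise H\"older (let alone Lipschitz) bound on $u$.
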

\begin{proof} We consider functions on $\S^1$ as $2\pi$-periodic functions on $\R$. If $\psi\in\mathrm H^1(\S^1)$, then $v(s)=\psi(s)\,e^{ias}$ satisfies the condition
\be{Cdt:Per}
v(s+2\pi)=e^{2i\pi a}\,v(s)\quad\forall\,s\in\R
\ee
and
\[
\lambda_{a,p}(\mu)=\min\frac{\nrmS{v'}2^2+\mu\,\nrmS vp^2}{\nrmS v2^2}
\]
where the minimization is taken on the set of the functions $v\in C^{0,1/2}(\R)$ such that $v'\in\mathrm L^2(-\pi,\pi)$ and~\eqref{Cdt:Per} holds.

With $v=u\,e^{i\phi}$ written in polar form, the boundary condition becomes
\be{bcuphi}
u(\pi)=u(-\pi)\,,\quad\phi(\pi)=2\pi\,(a+k)+\phi(-\pi)
\ee
for some $k\in\mathbb Z$, and $\nrmS{v'}2^2=\nrmS{u'}2^2+\nrmS{u\,\phi'}2^2$ so that
\[
\lambda_{a,p}(\mu)=\min\frac{\nrmS{u'}2^2+\nrmS{u\,\phi'}2^2+\mu\,\nrmS up^2}{\nrmS u2^2}
\]
where the minimization is taken on the set of the functions $(u,\phi)\in C(\R)^2$ such that $u'$, $u\,\phi'\in\mathrm L^2(\S^1)$ and~\eqref{bcuphi} holds.

Up to a multiplication of $u$ by a constant so that $\nrmS up=1$, the Euler-Lagrange equations are
\[
-\,u''+|\phi'|^2\,u+\mu\,|u|^{p-2}\,u=\lambda_{a,p}(\mu)\,u\quad\mbox{and}\quad(\phi'\,u^2)'=0\,.
\]
If $a\in(0,1/2)$, by integrating the second equation and using Lemma~\ref{Lem:NonVanishing}, we find a constant $L$ such that $\phi'=L/u^2$. Taking~\eqref{bcuphi} into account, we deduce from
\[
L\int_{-\pi}^\pi\frac{ds}{u^2}=\int_{-\pi}^\pi\phi'\,ds=2\pi\,(a+k)
\]
that
\[
\nrmS{u\,\phi'}2^2=L^2\int_{-\pi}^\pi\frac{ds}{u^2}=\frac{(a+k)^2}{\nrmS{u^{-1}}2^2}\,.
\]
This establishes that
\[
\lambda_{a,p}(\mu)=\min_{u,\,k}\mathcal Q_{a+k,p,\mu}[u]
\]
where the minimization is taken on all $k\in\Z$ and on all functions $u\in\mathrm H^1(\S^1)$. Because of the restriction $a\in(0,1/2)$, the minimum is achieved by $k=0$.

The case $a=1/2$ is a limit case that can be handled as in~\cite[Theorem~III.7]{doi:10.1063/1.5022121}. In this case the result holds also true, with the minimizer being in $\mathrm H^1_0(\S^1)\setminus\{0\}$, and with the convention defined in \eqref{extendedexpression} for the expression of $Q_{a,p,\mu}[u]$ when $u$ vanishes in $\S^1$.\end{proof}

\subsubsection{A rigidity result}
If $a\in(0,1/2)$, as in~\cite{doi:10.1063/1.5022121}, the study of~\eqref{Ineq:Interp} is reduced to the study of the inequality
\be{Ineq:Interp2}
\nrmS{u'}2^2+a^2\,\nrmS{u^{-1}}2^{-2}+\mu\,\nrmS up^2\ge\lambda_{a,p}(\mu)\,\nrmS u2^2\quad\forall\,u\in\mathrm H^1(\S^1)
\ee
where $u$ is now a real valued function. Necessary adaptations to the trivial case $a=0$ and to the limit case $a=1/2$ are straightforward and left to the reader. The result below is the analogue of Proposition~\ref{prop:rigidity} in the subcritical range.
\begin{theorem}\label{prop:rigiditypless2} Let $p\in (1,2)$, $a\in(0,1/2)$, and $\mu>0$.
\begin{enumerate}
\item[(i)] If $\mu\,(2-p)+4\,a^2\le1$, then $\lambda_{a,p}(\mu)=a^2+\mu$ and equality in~\eqref{Ineq:Interp2} is achieved only by the constants.
\item[(ii)] If $\mu\,(2-p)+4\,a^2>1$, then $\lambda_{a,p}(\mu)<a^2+\mu$ and equality in~\eqref{Ineq:Interp2} is not achieved by the constants.
\end{enumerate}
\end{theorem}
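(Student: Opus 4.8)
The plan is to mirror the superquadratic Proposition~\ref{prop:rigidity}, replacing the non-magnetic superquadratic estimate by its subquadratic analogue and reading the threshold off from a perturbation. By Lemmas~\ref{Lem:Existence},~\ref{Lem:NonVanishing} and~\ref{lem:reduction}, the quantity $\lambda_{a,p}(\mu)=\min\mathcal Q_{a,p,\mu}$ is attained by a nonvanishing function, so I may assume throughout that the minimizer $u$ is positive and that every term in~\eqref{Ineq:Interp2} is the genuine one. Since $a\in(0,1/2)$ gives $1-4\,a^2>0$, the starting point for part~(i) is the algebraic identity
\begin{multline*}
\nrmS{u'}2^2+a^2\,\nrmS{u^{-1}}2^{-2}+\mu\,\nrmS up^2
=(1-4\,a^2)\Big(\nrmS{u'}2^2+\tfrac{\mu}{1-4\,a^2}\,\nrmS up^2\Big)\\
+4\,a^2\Big(\nrmS{u'}2^2+\tfrac14\,\nrmS{u^{-1}}2^{-2}\Big)\,.
\end{multline*}
Bounding the last bracket from below by $\tfrac14\,\nrmS u2^2$ via~\eqref{Ineq:InterpZero2}, and the first bracket from below by $\tfrac{\mu}{1-4\,a^2}\,\nrmS u2^2$ via the $d=1$ subquadratic inequality~\eqref{Sublinear} (which applies precisely when $\tfrac{\mu}{1-4\,a^2}\le\tfrac1{2-p}$, i.e.\ $\mu\,(2-p)+4\,a^2\le1$), I obtain $\mathcal Q_{a,p,\mu}[u]\ge a^2+\mu$. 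As constants give exactly $a^2+\mu$, this proves $\lambda_{a,p}(\mu)=a^2+\mu$ throughout the range of~(i).

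For the rigidity in~(i) I would trace the chain backwards. Any positive minimizer saturates it; in particular the subquadratic inequality~\eqref{Sublinear} is saturated with parameter $\tfrac{\mu}{1-4\,a^2}\le\tfrac1{2-p}$. In this (closed) symmetry range the only optimizers of the non-magnetic inequality are the constants, which is exactly the rigidity recalled in Section~\ref{Sec:withoutWeights}, where positive constants are the only positive solutions of the Euler-Lagrange equation. Hence $u$ is constant, which settles the equality case; note that saturation of~\eqref{Sublinear} alone already forces this, so the equality case of~\eqref{Ineq:InterpZero2} need not be analyzed separately.

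For part~(ii) I would beat the constant by perturbing along the first Fourier mode. Taking $u=1+\varepsilon\cos\theta$, which is positive and $2\pi$-periodic for $|\varepsilon|<1$, all integrands are smooth and a second-order Taylor expansion at $\varepsilon=0$ gives
\[
\mathcal Q_{a,p,\mu}[1+\varepsilon\cos\theta]=a^2+\mu+\tfrac12\big(1-4\,a^2-\mu\,(2-p)\big)\,\varepsilon^2+O(\varepsilon^3)\,,
\]
the coefficient coming from $\nrmS{u'}2^2\sim\tfrac12\varepsilon^2$, $\nrmS up^2\sim1+\tfrac{p-1}2\varepsilon^2$, $\nrmS{u^{-1}}2^{-2}\sim1-\tfrac32\varepsilon^2$ and $\nrmS u2^2\sim1+\tfrac12\varepsilon^2$. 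When $\mu\,(2-p)+4\,a^2>1$ this quadratic coefficient is negative, so $\mathcal Q_{a,p,\mu}[1+\varepsilon\cos\theta]<a^2+\mu$ for small $\varepsilon\neq0$; thus $\lambda_{a,p}(\mu)<a^2+\mu$ and the constants are not optimal. The same computation shows that $\mu\,(2-p)+4\,a^2=1$ is the sharp threshold, the first Fourier mode being the first to destabilize while higher modes give strictly larger thresholds.

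The main obstacle I expect is the equality case of~(i) at the endpoint $\mu\,(2-p)+4\,a^2=1$. There the second variation above degenerates (its coefficient vanishes along $\cos\theta$), so uniqueness of constants cannot be read off from the quadratic form and must instead be imported from the \emph{sharp endpoint} rigidity of the non-magnetic subquadratic inequality. This endpoint rigidity genuinely uses $p>1$: at $p=1$ the Euler-Lagrange nonlinearity $u^{p-1}$ becomes constant, the equation linearizes, and a nonconstant family of the form $c+A\cos\theta$ saturates the inequality, which is precisely why the statement is restricted to $p\in(1,2)$.
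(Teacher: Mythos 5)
Your proposal is correct and follows essentially the same route as the paper: part~(i) uses the identical convex splitting of the energy into a multiple of the subquadratic inequality~\eqref{Ineq:InterpZero} and a multiple of~\eqref{Ineq:InterpZero2}, and part~(ii) uses the same test function $1+\varepsilon\cos s$. Your second-order coefficient $\tfrac12\,\big(1-4\,a^2-\mu\,(2-p)\big)\,\varepsilon^2$ is the correct expansion of $\mathcal Q_{a,p,\mu}[u_\varepsilon]-(a^2+\mu)$, and your explicit treatment of the equality case (tracing saturation back to the endpoint rigidity of~\eqref{Ineq:InterpZero}, which is where $p>1$ is needed) fills in a step the paper leaves implicit.
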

\begin{proof} In case (i) we can write
\begin{multline*}
\nrmS{u'}2^2+a^2\,\nrmS{u^{-1}}2^{-2}+\mu\,\nrmS up^2\\
=(1-4\,a^2)\(\nrmS{u'}2^2+\frac\mu{1-4\,a^2}\,\nrmS up^2\)\\
+4\,a^2\(\nrmS{u'}2^2+\frac14\,\nrmS{u^{-1}}2^2\)\,,
\end{multline*}
deduce from~\eqref{Ineq:InterpZero} that
\[
\nrmS{u'}2^2+\frac\mu{1-4\,a^2}\,\nrmS up^2\ge\frac\mu{1-4\,a^2}\,\nrmS u2^2
\]
if $\mu/(1-4\,a^2)\le1/(2-p)$ and conclude using~\eqref{Ineq:InterpZero2}.

In case (ii), let us consider the test function $u_\varepsilon:=1+\varepsilon\,w_1$, where $w_1$ is the eigenfunction corresponding to the first non-zero eigenvalue of $-\,d^2/ds^2$ on $\mathrm H^1(\S^1)$, with periodic boundary conditions, namely, \hbox{$w_1(s)=\cos s$} and $\lambda_1=1$. A Taylor expansion shows that
\[
\mathcal Q_{a,p,\mu}[u_\varepsilon]=\big(1+a^2-\,\mu\,(2-p)\big)\,\varepsilon^2+o(\varepsilon^2)\,,
\]
which proves the result. Notice that the Taylor expansion is also valid if $a=0$, so that $(p-2)$ is the optimal constant in~\eqref{Ineq:InterpZero}, and also that a similar Taylor expansion holds in case of~\eqref{Ineq:InterpZero2}, which formally corresponds to $p=-\,2$.
\end{proof}

\subsection{Aharonov-Bohm magnetic interpolation inequalities on \texorpdfstring{$\T$}{T2}}\label{Sec:flat}

We consider a toy model for Aharonov-Bohm magnetic fields on the flat torus which can be seen as a $2$-dimensional cylinder, with periodicity along the axis. It is a $2$-dimensional extension of the superquadratic magnetic ring model of Section~\ref{Sec:magnetic rings $p<2$}.

Let us consider the flat torus $\T=\S^1\times\S^1\approx[-\pi,\pi)\times[-\pi,\pi)\ni(x,y)$ with periodic boundary conditions in $x$ and $y$. We denote by $d\sigma$ the uniform probability measure $d\sigma=dx\,dy/(4\pi^2)$ and consider the magnetic gradient
\be{nablaT2}
\nA\psi:=\big(\psi_x,\psi_y-i\,a\,\psi\big)
\ee
and the magnetic kinetic energy
\[
\nrmT{\nA u}2^2=\iT{|\nA\psi|^2}=\iT{\(|\psi_x|^2+|\psi_y-\,i\,a\,\psi|^2\)}\,.
\]

\subsubsection{A magnetic ground state estimate}
\begin{lemma}\label{lem:GroundStateTorus} Assume that $a\in[0,1/2]$. With the notation~\eqref{nablaT2}, we have
\[
\iT{|\nA\psi|^2}\ge a^2\iT{|\psi|^2}\quad\forall\,\psi\in\HA(\T)\,.
\]\end{lemma}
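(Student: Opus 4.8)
The plan is to reduce this two-dimensional magnetic estimate to a one-dimensional magnetic Poincar\'e inequality in the $y$ variable. First I would discard the nonnegative contribution $|\psi_x|^2$, so that it suffices to establish
\[
\iT{|\psi_y-i\,a\,\psi|^2}\ge a^2\iT{|\psi|^2}\,.
\]
Since the periodicity in $x$ and in $y$ decouples, I would treat $x$ as a parameter and, for each fixed $x$, expand the fiber $\psi(x,\cdot)$ in a Fourier series on $\S^1$, writing $\psi(x,y)=\sum_{k\in\Z}c_k(x)\,e^{i\,k\,y}$.

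A direct computation then gives $\psi_y-i\,a\,\psi=i\sum_{k\in\Z}(k-a)\,c_k(x)\,e^{i\,k\,y}$, and Parseval's identity in the $y$ variable (with the normalized measure $dy/(2\pi)$) yields
\[
\frac1{2\pi}\int_{-\pi}^\pi|\psi_y-i\,a\,\psi|^2\,dy=\sum_{k\in\Z}(k-a)^2\,|c_k(x)|^2\,.
\]
Bounding each term below by the smallest possible factor gives the fiberwise inequality
\[
\frac1{2\pi}\int_{-\pi}^\pi|\psi_y-i\,a\,\psi|^2\,dy\ge\Big(\min_{k\in\Z}(k-a)^2\Big)\,\frac1{2\pi}\int_{-\pi}^\pi|\psi|^2\,dy\,.
\]

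The key point, and the only place where the hypothesis is used, is that $\min_{k\in\Z}(k-a)^2=a^2$ \emph{precisely because} $a\in[0,1/2]$, the minimum being attained at $k=0$ (the competing value $k=1$ giving $(1-a)^2\ge1/4\ge a^2$). Substituting this constant and then integrating the resulting inequality over $x\in[-\pi,\pi)$ against $dx/(2\pi)$ recovers the claimed bound. There is no serious obstacle here: the argument is simply the one-dimensional magnetic ground state computation on $\S^1$ (already underlying the Fourier analysis in the proof of Lemma~\ref{lem:gapp}), transported fiberwise over the $x$-circle, together with the discarding of the manifestly nonnegative $|\psi_x|^2$ term. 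The restriction $a\in[0,1/2]$ is what singles out the value $a^2$; for general $a$ the same proof would give the constant $\min_{k\in\Z}(k-a)^2$.
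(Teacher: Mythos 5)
Your proof is correct and is essentially the paper's argument: the paper performs a Fourier decomposition on the full basis $(e^{i\ell x}e^{iky})_{k,\ell\in\Z}$ and identifies the lowest mode $\ell^2+(k-a)^2=a^2$ at $k=\ell=0$, which is the same computation as yours once one notes that discarding $|\psi_x|^2$ amounts to dropping the nonnegative $\ell^2$ contribution. The key point in both versions is that $\min_{k\in\Z}(k-a)^2=a^2$ precisely because $a\in[0,1/2]$.
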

\begin{proof} We make a Fourier decomposition on the basis $(e^{i\,\ell\,x}\,e^{i\,k\,y})_{k,\ell\in\Z}$. We find that the lowest modes are given by
\begin{align*}
&k=0\,,\;\ell=0\,:\;\lambda_{00}=a^2\,,\\
&k=1\,,\;\ell=0\,:\;\lambda_{10}=(1-a)^2\ge a^2\, \text{ since } a\in[0,1/2],\\
&k=0\,,\;\ell=1\,:\;\lambda_{01}=1+a^2\,.
\end{align*}
Therefore, $\lambda_{00}$ is the lowest mode.
\end{proof}

\subsubsection{The Bakry-Emery method applied to the 2-dimensional torus}\label{Subsubsection:3.2.2}
We consider the flow given by
\[
\frac{\partial u}{\partial t}=\Delta u+(p-1)\,\frac{|\nabla u|^2}u
\]
and observe that
\[
\frac d{dt}\,\nrmT{u(t,\cdot)}p^2=0
\]
on the one hand, and
\begin{multline*}
-\,\frac12\,\frac d{dt}\(\nrmT{\nabla u(t,\cdot)}2^2-\lambda\,\nrmT{u(t,\cdot)}2^2\)\\
=\nrmT{\Delta u}2^2+(p-1)\iT{\Delta u\,\frac{|\nabla u|^2}u}-\lambda\,(2-p)\,\nrmT{\nabla u}2^2
\end{multline*}
on the other hand. Integrations by parts show that
\[
\nrmT{\Delta u}2^2=\nrmT{\mathrm{Hess}\,u}2^2
\]
and
\[
\iT{\Delta u\,\frac{|\nabla u|^2}u}=-\,2\iT{\mathrm{Hess}\,u\,:\,\frac{\nabla u\otimes\nabla u}u}+\iT{\frac{|\nabla u|^4}{u^2}}\,.
\]
Hence
\begin{multline*}
-\,\frac12\,\frac d{dt}\(\nrmT{\nabla u(t,\cdot)}2^2-\lambda\,\nrmT{u(t,\cdot)}2^2\)\\
=(2-p)\(\nrmT{\Delta u}2^2-\lambda\,\nrmT{\nabla u}2^2\)+(p-1)\,\left\|\mathrm{Hess}\,u-\tfrac{\nabla u\otimes\nabla u}u\right\|_{\mathrm L^2(\T)}^2\,.
\end{multline*}
We know from the Poincar\'e inequality that
\[
\nrmT{\Delta u}2^2\ge\nrmT{\nabla u}2^2\,,
\]
with optimal constant $1$, so we can conclude in the case $1\le p<2$ that $\nrmT{\nabla u(t,\cdot)}2^2-\lambda\,\nrmT{u(t,\cdot)}2^2$ is monotone nonincreasing if $0\le\lambda\le1$. As a consequence, we have the following result.
\begin{proposition}\label{Prop:BE} For any $p\in[1,2)$, we have
\[
\nrmT{\nabla u}2^2+\nrmT up^2\ge\nrmT u2^2\quad\forall\,u\in\mathrm H^1(\T)\,.
\]
\end{proposition}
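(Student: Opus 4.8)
The plan is to read off the inequality directly from the monotonicity just established, combined with the conservation of the $\mathrm L^p$ norm along the flow. Taking $\lambda=1$ in the computation above and invoking the Poincar\'e inequality $\nrmT{\Delta u}2^2\ge\nrmT{\nabla u}2^2$, the quantity $t\mapsto\nrmT{\nabla u(t,\cdot)}2^2-\nrmT{u(t,\cdot)}2^2$ is monotone nonincreasing along the flow for every $p\in[1,2)$. Since $\frac d{dt}\,\nrmT{u(t,\cdot)}p^2=0$, the full functional
\[
\mathcal G(t):=\nrmT{\nabla u(t,\cdot)}2^2+\nrmT{u(t,\cdot)}p^2-\nrmT{u(t,\cdot)}2^2
\]
is itself monotone nonincreasing. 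The asserted inequality is precisely $\mathcal G(0)\ge0$, so it suffices to prove that $\lim_{t\to+\infty}\mathcal G(t)=0$ and then to conclude by monotonicity.

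To identify the large-time limit I would reduce the flow to the linear heat equation. For $u>0$, the substitution $v=u^p$ turns $\partial_t u=\Delta u+(p-1)\,|\nabla u|^2/u$ into $\partial_t v=\Delta v$ by a direct computation, which is consistent with the conservation of $\nrmT up^2=(\iT v)^{2/p}$ because the heat flow on the torus preserves the mean $\iT v$. As $t\to+\infty$, $v(t,\cdot)$ converges to its mean $\iT{v(0,\cdot)}=\nrmT{u(0,\cdot)}p^p$, hence $u(t,\cdot)$ converges to the constant $c:=\nrmT{u(0,\cdot)}p$. Consequently $\nrmT{\nabla u(t,\cdot)}2^2\to0$ and, since $d\sigma$ is a probability measure, $\nrmT{u(t,\cdot)}2^2\to c^2=\nrmT{u(t,\cdot)}p^2$, so that $\mathcal G(t)\to0$. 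Monotonicity then yields $\mathcal G(0)\ge\lim_{t\to+\infty}\mathcal G(t)=0$, which is the claim.

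The remaining, and main, difficulty is technical rather than conceptual: the \emph{carr\'e du champ} computation presupposes a smooth solution bounded away from $0$, so the argument as written applies to smooth positive initial data. I would first establish global existence, positivity and parabolic smoothing for such data (again transparent through $v=u^p$, for which these are standard for the heat equation on $\T$), and then pass to a general $u\in\mathrm H^1(\T)$ by two approximations. First, the elementary bound $\nrmT{\nabla|u|}2\le\nrmT{\nabla u}2$ reduces matters to nonnegative $u$; second, one approximates $|u|$ in $\mathrm H^1(\T)$ by smooth functions bounded below by a small $\varepsilon>0$, applies the inequality to each, and lets $\varepsilon\to0$, using strong convergence for the $\mathrm L^2$ and $\mathrm L^p$ terms and lower semicontinuity for the gradient term. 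One must also check that $\mathcal G$ is genuinely differentiable along the flow and that the boundary terms in the integrations by parts vanish, which holds here by periodicity on the torus.
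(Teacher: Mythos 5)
Your proposal is correct and follows essentially the same route as the paper: the Bakry--Emery flow $\partial_t u=\Delta u+(p-1)\,|\nabla u|^2/u$ preserves the $\mathrm L^p$ norm, the Poincar\'e inequality $\|\Delta u\|_{\mathrm L^2(\T)}^2\ge\|\nabla u\|_{\mathrm L^2(\T)}^2$ makes the deficit functional nonincreasing for $\lambda=1$, and the inequality follows by comparing with the constant large-time limit. The paper leaves the convergence to a constant and the regularity/approximation issues implicit, whereas you make them rigorous via the clean observation that $v=u^p$ linearizes the flow into the heat equation; this is a worthwhile supplement but not a different method.
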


\subsubsection{A tensorization result without magnetic potential}
A result better than Proposition~\ref{Prop:BE} follows from a tensorization argument that can be found in~\cite{MR2081075,DL2018}.
\begin{proposition}\label{Prop:tensorization} For any $p\in[1,2)$, we have
\be{Ineq:InterpZerotorus}
(2-p)\,\nrmT{\nabla u}2^2+\nrmT up^2\ge\nrmT u2^2\quad\forall\,u\in\mathrm H^1(\T)\,.
\ee
Moreover the factor $(2-p)$ is the optimal constant. \end{proposition}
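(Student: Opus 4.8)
The plan is to \emph{tensorize} the one-dimensional inequality~\eqref{Ineq:InterpZero}, slicing the torus $\T=\S^1\times\S^1$ one variable at a time. Write the normalized measure as a product $d\sigma=d\sigma_1(x)\,d\sigma_2(y)$ of two copies of the uniform probability measure on $\S^1$. Since $\big\|\nabla|u|\big\|_{\mathrm L^2(\T)}\le\nrmT{\nabla u}2$ while the $\mathrm L^2$ and $\mathrm L^p$ norms of $u$ and $|u|$ coincide, I would first reduce to a non-negative real-valued $u$; this is harmless and, more importantly, removes any sign issue when differentiating $\mathrm L^p$ norms below.

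First I would freeze $y$ and apply~\eqref{Ineq:InterpZero} in the variable $x$ to the slice $u(\cdot,y)$, then integrate against $d\sigma_2$. Introducing the partial $\mathrm L^p$ norm $g(y):=\big(\int_{\S^1}|u(x,y)|^p\,d\sigma_1(x)\big)^{1/p}$ and using Fubini on the remaining terms, this produces
\[
\nrmT u2^2\le(2-p)\,\nrmT{u_x}2^2+\int_{\S^1}g(y)^2\,d\sigma_2(y)\,.
\]
The heart of the matter is then to re-inject the $y$-direction through $g$. I would check that $g$ is admissible for the one-dimensional inequality: by the triangle inequality and the Minkowski integral inequality, $g$ is absolutely continuous and $|g'(y)|\le\big\|u_y(\cdot,y)\big\|_{\mathrm L^p(\S^1)}$ for a.e.\ $y$ (the $\mathrm L^p$ norm in $x$), and since $d\sigma_1$ is a probability measure and $p<2$ this is bounded by $\big\|u_y(\cdot,y)\big\|_{\mathrm L^2(\S^1)}$; integrating the square gives $g\in\mathrm H^1(\S^1)$ together with $\nrmS{g'}2^2\le\nrmT{u_y}2^2$.

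Next I would apply~\eqref{Ineq:InterpZero} in $y$ to the function $g$ itself. Since $\int_{\S^1}g(y)^p\,d\sigma_2=\iint_\T|u|^p\,d\sigma$, one has $\nrmS gp=\nrmT up$, whence
\[
\int_{\S^1}g(y)^2\,d\sigma_2(y)=\nrmS g2^2\le(2-p)\,\nrmS{g'}2^2+\nrmS gp^2\le(2-p)\,\nrmT{u_y}2^2+\nrmT up^2\,.
\]
Combining the two displayed estimates and using $\nrmT{u_x}2^2+\nrmT{u_y}2^2=\nrmT{\nabla u}2^2$ yields exactly~\eqref{Ineq:InterpZerotorus}.

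For the optimality of the factor $(2-p)$, I would simply test~\eqref{Ineq:InterpZerotorus} on functions $u(x,y)=v(x)$ depending on $x$ alone: the inequality then collapses to~\eqref{Ineq:InterpZero} for $v$ on $\S^1$, whose constant $(2-p)$ is optimal, so no smaller constant can work on $\T$; alternatively the perturbation $u_\varepsilon=1+\varepsilon\cos x$ gives the same conclusion through a second-order Taylor expansion. The main technical obstacle is the regularity step for $g$, namely the bound $|g'(y)|\le\big\|u_y(\cdot,y)\big\|_{\mathrm L^p(\S^1)}$: this is where the $1$-Lipschitz character of the $\mathrm L^p$ norm and the Minkowski integral inequality are essential, and it is precisely the concrete incarnation, for the $\Phi$-entropy with $\Phi(s)=s^{2/p}$, of the abstract subadditivity used in~\cite{MR2081075,DL2018}.
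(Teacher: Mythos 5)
Your proof is correct, and it reaches the same destination as the paper by making explicit what the paper delegates to a citation. The paper's own proof has two ingredients: a Bakry--\'Emery flow computation on $\S^1$ that re-derives the one-dimensional inequality~\eqref{Ineq:InterpZero}, and then an appeal to the abstract tensorization theorems of \cite{MR2081075} and \cite{DL2018} (``up to a straightforward adaptation to the periodic setting'') to transfer the optimal constant from $\S^1$ to $\T$. You instead take \eqref{Ineq:InterpZero} as already established (legitimate, since it is a special case of~\eqref{GNsph}) and prove the tensorization step by hand: slicing in $x$, passing to $g(y)=\big(\int_{\S^1}|u(x,y)|^p\,d\sigma_1\big)^{1/p}$, and applying the one-dimensional inequality again in $y$. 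The key observations --- that the entropy-like quantity $\nrmT u2^2-\nrmT up^2$ decomposes exactly as the sum of the sliced $x$-entropies plus the $y$-entropy of $g$, and that $\nrmS{g'}2^2\le\nrmT{u_y}2^2$ via the reverse triangle inequality, Minkowski's integral inequality and $\mathrm L^p\hookrightarrow\mathrm L^2$ on a probability space for $p<2$ --- are precisely the content of the subadditivity of $\Phi$-entropies with $\Phi(s)=s^{2/p}$, as you note yourself. Your optimality argument (testing with functions of $x$ alone, or $u_\varepsilon=1+\varepsilon\cos x$) coincides with the paper's. What your version buys is a self-contained, elementary proof that makes the role of $p<2$ transparent (it is exactly where $|g'|\le\|u_y(\cdot,y)\|_{\mathrm L^2}$ is needed); what the paper's version buys is brevity and the fact that the cited results immediately cover more general products and more general $\Phi$-entropies.
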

\begin{proof} By taking on $\T$ a function depending only on $x\in\S^1$, it is clear that the constant in~\eqref{Ineq:InterpZerotorus} cannot be improved. The proof of~\eqref{Ineq:InterpZerotorus} can be done with the Bakry-Emery method applied to $\S^1$ and goes as follows.

Let us consider the flow given by
\[
\frac{\partial u}{\partial t}=u''+(p-1)\,\frac{|u'|^2}u
\]
and observe that $\frac d{dt}\,\nrmS{u(t,\cdot)}p^2=0$ on the one hand, and
\begin{multline*}
-\,\frac12\,\frac d{dt}\(\nrmS{u'(t,\cdot)}2^2-\lambda\,\nrmS{u(t,\cdot)}2^2\)\\
=\nrmS{u''}2^2+(p-1)\iSsigma{u''\,\frac{|u'|^2}u}-\lambda\,(2-p)\,\nrmS{u'}2^2\\
=\nrmS{u''}2^2+\frac13\,(p-1)\iSsigma{\frac{|u'|^4}{u^2}}-\lambda\,(2-p)\,\nrmS{u'}2^2
\end{multline*}
on the other hand. Hence
\[
-\,\frac12\,\frac d{dt}\(\nrmS{u'(t,\cdot)}2^2-\lambda\,\nrmS{u(t,\cdot)}2^2\)\le0
\]
if $\lambda\,(2-p)\le1$, because of the Poincar\'e inequality $\nrmS{u''}2^2\ge\nrmS{u'}2^2$. Up to a sign change of $\lambda$, this computation also holds if $p>2$ or if $p=-\,2$, as noticed in~\cite{doi:10.1063/1.5022121}, and it is straightforward to extend it to the limit case $p=2$ corresponding to the logarithmic Sobolev inequality.

According to~\cite[Proposition~3.1]{MR2081075} or~\cite[Theorem~2.1]{DL2018} and up to a straightforward adaptation to the periodic setting, the optimal constant for the inequality on $\T=\S^1\times\S^1$ is the same as for the inequality on $\S^1$, provided $1\le p<2$.
\end{proof}
As a consequence of Proposition~\ref{Prop:tensorization}, we have the inequality
\be{Interp:tensorization}
\nrmT{\nabla u}2^2+\mu\,\nrmT up^2\ge\Lambda_{0,p}(\mu)\,\nrmT u2^2\quad\forall\,u\in\mathrm H^1(\T)\,,
\ee
where $\mu\mapsto\Lambda_{0,p}(\mu)$ is a concave monotone increasing function on $(0,+\infty)$ such that $\Lambda_{0,p}(\mu)=\mu$ for any $\mu\in\big(0,1/(2-p)\big)$.

\subsubsection{A magnetic interpolation inequality in the flat torus}
Now let us consider the generalization of~\eqref{Interp:tensorization} to the case $a\neq0$.
\begin{lemma}\label{lem:InterpTorus} Assume that $p\in[1,2)$ and $a\in[0,1/2]$. With the notation~\eqref{nablaT2}, there exists a concave monotone increasing function $\mu\mapsto\Lambda_{a,p}(\mu)$ on $(0,+\infty)$ such that \hbox{$\lim_{\mu\to0_+}\Lambda_{a,p}(\mu)=a^2$} where $\Lambda_{a,p}(\mu)$ is the optimal constant in the inequality
\be{Ineq:Interp3}
\nrmT{\nA u}2^2+\mu\,\nrmT up^2\ge\Lambda_{a,p}(\mu)\,\nrmT u2^2\quad\forall\,u\in\HA(\T)\,.
\ee
Moreover, we have that
\[
\Lambda_{a,p}(\mu)\ge \mu+\big(1-\mu\,(2-p)\big)\,a^2\quad\mbox{for any}\quad\mu\le\frac1{2-p}\,.
\]
\end{lemma}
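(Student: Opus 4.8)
The plan is to \emph{define} $\Lambda_{a,p}(\mu)$ as the optimal constant, namely the infimum
\[
\Lambda_{a,p}(\mu):=\inf\left\{\nrmT{\nA u}2^2+\mu\,\nrmT up^2\,:\,u\in\HA(\T)\,,\;\nrmT u2=1\right\},
\]
and then to read off its qualitative properties exactly as in Lemma~\ref{Lem:SubMagInterp}. For each fixed $u$ the bracketed quantity is an affine and nondecreasing function of $\mu$, so $\Lambda_{a,p}$, being an infimum of such functions, is automatically concave and monotone nondecreasing on $(0,+\infty)$. The infimum is finite and positive: it is bounded below by $a^2$, since Lemma~\ref{lem:GroundStateTorus} gives $\nrmT{\nA u}2^2\ge a^2\,\nrmT u2^2$ and $\mu\,\nrmT up^2\ge0$, and it is bounded above by testing with the constant $u\equiv1$, for which $|\nA u|^2=a^2$ pointwise and $\nrmT up=\nrmT u2=1$, yielding $\Lambda_{a,p}(\mu)\le a^2+\mu$.

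These two bounds also settle the limit: from $a^2\le\Lambda_{a,p}(\mu)\le a^2+\mu$, letting $\mu\to0_+$ gives $\lim_{\mu\to0_+}\Lambda_{a,p}(\mu)=a^2$, as claimed.

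The heart of the matter is the quantitative lower bound, for which I would follow the convex-combination strategy used for $\S^2$ in the proof of Proposition~\ref{prop:interpolation}. Fix $t\in(0,1)$ and split the magnetic kinetic energy as $\nrmT{\nA u}2^2=t\,\nrmT{\nA u}2^2+(1-t)\,\nrmT{\nA u}2^2$. On the first piece apply the ground state estimate of Lemma~\ref{lem:GroundStateTorus}, $\nrmT{\nA u}2^2\ge a^2\,\nrmT u2^2$, and on the second the diamagnetic inequality $\nrmT{\nA u}2^2\ge\nrmT{\nabla|u|}2^2$ (see \cite[Theorem~7.21]{MR1817225}), which gives
\[
\nrmT{\nA u}2^2+\mu\,\nrmT up^2\ge t\,a^2\,\nrmT u2^2+(1-t)\(\nrmT{\nabla|u|}2^2+\tfrac{\mu}{1-t}\,\nrmT{|u|}p^2\).
\]
Applying the non-magnetic tensorization inequality \eqref{Interp:tensorization} to the real function $|u|$ with parameter $\mu/(1-t)$, and using that $\Lambda_{0,p}(\mu')=\mu'$ whenever $\mu'\le1/(2-p)$, the bracket is bounded below by $\tfrac{\mu}{1-t}\,\nrmT{|u|}2^2$ provided $\mu/(1-t)\le1/(2-p)$, i.e. $t\le1-\mu\,(2-p)$. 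Collecting terms yields $\nrmT{\nA u}2^2+\mu\,\nrmT up^2\ge(t\,a^2+\mu)\,\nrmT u2^2$ under that constraint; optimizing over the admissible range by taking $t=1-\mu\,(2-p)$ (which lies in $(0,1)$ precisely when $\mu<1/(2-p)$, the boundary $t=0$ being covered by tensorization alone) produces exactly $\Lambda_{a,p}(\mu)\ge\mu+\big(1-\mu\,(2-p)\big)\,a^2$.

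I do not expect a genuine obstacle here: all the conceptual work is already contained in Lemma~\ref{lem:GroundStateTorus} and Proposition~\ref{Prop:tensorization}, and the lemma is essentially their optimal combination through the diamagnetic inequality. The only points requiring care are the bookkeeping for $t$ (keeping $t\in(0,1)$ and the shifted parameter $\mu/(1-t)$ in the linear regime $\le1/(2-p)$) and the observation that $|u|\in\mathrm H^1(\T)$ so that the real-valued tensorization inequality legitimately applies; both are routine.
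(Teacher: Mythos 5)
Your proposal is correct and follows essentially the same route as the paper: the same convex splitting of the magnetic kinetic energy with parameter $t$, the ground state estimate of Lemma~\ref{lem:GroundStateTorus} on one piece, the diamagnetic inequality and the tensorization bound~\eqref{Interp:tensorization} on the other, and the same final choice $t=1-\mu\,(2-p)$. The additional bookkeeping you supply (concavity as an infimum of affine functions, the two-sided bound $a^2\le\Lambda_{a,p}(\mu)\le a^2+\mu$ giving the limit at $\mu\to0_+$) is exactly what the paper leaves implicit.
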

\begin{proof} For an arbitrary $t\in(0,1)$, we can write that
\begin{align*}
&\nrmT{\nA u}2^2+\mu\,\nrmT up^2\\
&\ge\,t\(\nrmT{\nA u}2^2-a^2\,\nrmT u2^2\)\\
&\kern12pt+(1-t)\(\nrmT{\nabla|u|}2^2+\frac\mu{1-t}\,\nrmT u2^p\)+t\,a^2\,\nrmT u2^2\\
&\ge\left[(1-t)\,\Lambda_{0,p}\(\frac\mu{1-t}\)+t\,a^2\right]\,\nrmT u2^2
\end{align*}
using the diamagnetic inequality $\nrmT{\nA u}2^2\ge\nrmT{\nabla|u|\,}2^2$. Inequality~\eqref{Interp:tensorization} applies with $\mu=1/(2-p)$ and $t=1-\mu\,(2-p)$.
\end{proof}

\subsubsection{A symmetry result in the subquadratic regime}
As an application of the results on magnetic rings of Theorem~\ref{prop:rigiditypless2}, we can prove a symmetry result for the optimal functions in~\eqref{Ineq:Interp3} in the case $p<2$. Let $\Lambda_{a,p}(\mu)$ be the optimal constant in~\eqref{Ineq:Interp3}.
\begin{theorem}\label{prop:InterpolationTorus} Assume that $a\in[0,1/2]$ and $p\in[1,2)$. Then
\[
\Lambda_{a,p}(\mu)=\lambda_{a,p}(\mu)\quad\mbox{if}\quad\mu\le\frac1{p-2}
\]
and any optimal function for~\eqref{Ineq:Interp} is then constant w.r.t.~$x$. Moreover, $\Lambda_{a,p}(\mu)=a^2+\mu$ if and only if $\mu\,(2-p)+4\,a^2\le1$ and equality in~\eqref{Ineq:Interp3} is then achieved only by the constants.\end{theorem}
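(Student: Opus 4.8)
The plan is to establish separately the two bounds $\Lambda_{a,p}(\mu)\le\lambda_{a,p}(\mu)$ and, in the stated range $\mu\le1/(2-p)$, the reverse bound $\Lambda_{a,p}(\mu)\ge\lambda_{a,p}(\mu)$, and then to read off the remaining assertions from Theorem~\ref{prop:rigiditypless2}. The first bound is immediate: using in~\eqref{Ineq:Interp3} a trial function $u=u(y)$ that does not depend on $x$ turns the torus functional into exactly the one-dimensional magnetic functional of~\eqref{Ineq:Interp} on the $y$-circle, and since the $x$-independent functions form a subclass, optimizing over them gives $\Lambda_{a,p}(\mu)\le\lambda_{a,p}(\mu)$.

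The core of the argument is the reverse bound, which I would obtain by a tensorization that treats the magnetic direction $y$ and the non-magnetic direction $x$ one after the other. First, applying the one-dimensional magnetic inequality~\eqref{Ineq:Interp} in the variable $y$ for a.e.\ fixed $x$ and integrating in $x$, I get, with $g(x):=\nrmS{u(x,\cdot)}p$,
\[
\iT{|u_y-i\,a\,u|^2}+\mu\iSsigma{g^2}\ge\lambda_{a,p}(\mu)\,\nrmT u2^2\,.
\]
It then remains to control the mixed quantity $\iSsigma{g^2}$ by the $x$-derivative. The standard bound for the derivative of an $\mathrm L^p$ norm, together with $p\le2$ on the probability space $\S^1$, gives $|g'(x)|\le\nrmS{u_x(x,\cdot)}p\le\nrmS{u_x(x,\cdot)}2$, whence $\iSsigma{|g'|^2}\le\nrmT{u_x}2^2$; moreover $\nrmS gp^p=\nrmT up^p$ and $\nrmS g2^2=\iSsigma{g^2}$. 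Feeding $g$ into the non-magnetic one-dimensional inequality~\eqref{Ineq:InterpZero} on the $x$-circle therefore yields
\[
(2-p)\,\nrmT{u_x}2^2+\nrmT up^2\ge\iSsigma{g^2}\,.
\]

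Combining the two displays and using $\mu\,(2-p)\le1$ to absorb $\mu\,(2-p)\,\nrmT{u_x}2^2\le\nrmT{u_x}2^2$ gives $\nrmT{\nA u}2^2+\mu\,\nrmT up^2\ge\lambda_{a,p}(\mu)\,\nrmT u2^2$, hence $\Lambda_{a,p}(\mu)\ge\lambda_{a,p}(\mu)$ and thus equality for $\mu\le1/(2-p)$. I expect the tensorization to be the main obstacle: because $p<2$ the $\mathrm L^p$ norm does not factor over $\T=\S^1\times\S^1$, so one is forced through the mixed quantity $g$, and one must check that each comparison points the right way — in particular that replacing $\iSsigma{g^2}$ by its upper bound is licit, which is the case precisely because it enters with the favourable sign $+\mu$.

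For the symmetry statement (existence of an optimizer following as in Lemma~\ref{Lem:Existence} from the diamagnetic inequality and compactness), I would track the equality cases. If $u$ is optimal, both displayed inequalities and the absorption step are equalities, and the last one forces $\big(1-\mu\,(2-p)\big)\,\nrmT{u_x}2^2\le0$, so $u$ is independent of $x$ whenever $\mu<1/(2-p)$, while the endpoint $\mu\,(2-p)=1$ is recovered from the equality case of~\eqref{Ineq:InterpZero}, which forces $g$ to be constant. Since optimizers then depend on $y$ only, the problem reduces to the one-dimensional one of Section~\ref{Sec:magnetic rings $p<2$}, and Theorem~\ref{prop:rigiditypless2} gives $\Lambda_{a,p}(\mu)=\lambda_{a,p}(\mu)=a^2+\mu$ with the constants as the only optimizers, precisely when $\mu\,(2-p)+4\,a^2\le1$ (a condition which entails $\mu\le1/(2-p)$, so that the equality $\Lambda_{a,p}=\lambda_{a,p}$ is indeed available). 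Finally, for $\mu>1/(2-p)$ the concavity of $\Lambda_{a,p}$ from Lemma~\ref{lem:InterpTorus}, together with $\Lambda_{a,p}(0)=a^2$ and the bound $\Lambda_{a,p}(\mu)\le a^2+\mu$ coming from constant trial functions, shows that $\Lambda_{a,p}(\mu)=a^2+\mu$ cannot hold there, which yields the converse implication.
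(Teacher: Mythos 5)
Your argument is essentially the paper's own proof: the same slicewise application of~\eqref{Ineq:Interp} in the magnetic variable $y$, the same H\"older control $|g'(x)|\le\nrmS{u_x(x,\cdot)}2$ of the derivative of the slicewise $\mathrm L^p$ norm (this is exactly the paper's three-exponent H\"older estimate for $v_x$, valid because $p\le2$ and the measure is normalized), the same use of~\eqref{Ineq:InterpZero} in the $x$-variable with absorption under $\mu\,(2-p)\le1$, and the same reduction of the equality case to Theorem~\ref{prop:rigiditypless2}. Your explicit bookkeeping of the leftover term $\big(1-\mu\,(2-p)\big)\,\nrmT{u_x}2^2$ makes the equality-case discussion somewhat more transparent than the paper's.

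One step does not hold as written. For the ``only if'' direction you invoke concavity of $\Lambda_{a,p}$, the limit $\Lambda_{a,p}(0^+)=a^2$ and the upper bound $\Lambda_{a,p}(\mu)\le a^2+\mu$ to exclude $\Lambda_{a,p}(\mu)=a^2+\mu$ for $\mu>1/(2-p)$; but all three properties are also satisfied by the affine function $\mu\mapsto a^2+\mu$ itself, so by themselves they exclude nothing. To make the concavity argument work you would need a point $\mu_1<\mu$ where strict inequality $\Lambda_{a,p}(\mu_1)<a^2+\mu_1$ is already known (available for $a>0$ via Theorem~\ref{prop:rigiditypless2}\,(ii) on the window $(1-4\,a^2)/(2-p)<\mu_1\le1/(2-p)$, but that window is empty when $a=0$). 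The detour is in any case unnecessary: the condition $\mu\,(2-p)+4\,a^2>1$ is precisely the hypothesis of Theorem~\ref{prop:rigiditypless2}\,(ii) for \emph{any} $\mu>0$, so $\lambda_{a,p}(\mu)<a^2+\mu$ there, and your very first observation $\Lambda_{a,p}(\mu)\le\lambda_{a,p}(\mu)$ (restriction to $x$-independent test functions) finishes the converse in all cases.
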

\begin{proof} Let us use the notation $\fint f\,dx:=\frac1{2\pi}\int_{-\pi}^\pi f\,dx$ in order to denote a normalized integration with respect to the single variable $x$, where $y$ is considered as a parameter. For almost every $x\in\S^1$ we can apply~\eqref{Ineq:Interp} to the function $\psi(x,\cdot)$ and get
\begin{multline*}
\nrmT{\nA \psi}2^2+\mu\,\nrmT\psi p^2\\
\ge\nrmT{\partial_x\psi}2^2+\lambda_{a,p}(\mu)\,\nrmT\psi2^2+\mu\,\nrmT\psi p^2-\mu\fint\(\fint|\psi|^p\,dy\)^\frac2p\,dx
\end{multline*}
Let us define $u:=|\psi|$, $v(x):=\(\fint |u(x,y)|^p\,dy\)^{1/p}$ and observe that
\[
|v_x|=v^{1-p}\fint u^{p-1}\,u_x\,dy\le v^{1-p}\(\fint u^p\,dy\)^\frac{p-1}p\(\fint|u_x|^2\,dy\)^\frac12\(\fint 1\,dy\)^{\frac12-\frac{p-1}p}
\]
by H\"older's inequality, under the condition $p\le2$, that is,
\[
|v_x|^2\le\fint|u_x|^2\,dy\le\fint|\partial_x\psi|^2\,dy\,.
\]
We conclude that if $\mu\le1/(2-p)$,
\[
\iSsigma{|v_x|^2}+\mu\(\iSsigma{|v|^p}\)^{2/p}-\mu\iSsigma{|v|^2}+\lambda_{a,p}(\mu)\,\nrmT\psi2^2\ge\lambda_{a,p}(\mu)\,\nrmT\psi2^2
\]
using~\eqref{Ineq:InterpZero}. The equality is achieved by functions $v$ which are constant w.r.t.~$x$ and Theorem~\ref{prop:rigiditypless2} applies. \end{proof}

\subsection{Magnetic Hardy inequalities in dimensions \texorpdfstring{$2$ and $3$}{2 and 3}}\label{Sec:Consequences}

In this section, we draw some consequences of our results on magnetic rings of Section~\ref{Sec:magnetic rings $p<2$}. Here $d\sigma$ denotes the uniform probability measure on $\S^1$. The method relies on Keller-Lieb-Thirring dual estimates.

\subsubsection{Keller-Lieb-Thirring inequalities on the circle}
As in~\cite{MR3784917}, by duality we obtain a spectral estimate.
\begin{proposition}\label{Prop:KLT} Assume that $a\in[0,1/2]$ and $p\in[1,2)$. If $\phi$ is a nonnegative potential such that $\phi^{-1}\in\mathrm L^q(\S^1)$, then the lowest eigenvalue $\lambda_1$ of $-\,(\partial_y-\,i\,a)^2+\phi$ is bounded from below according to
\[
\lambda_1\ge\lambda_{a,p}\(\nrmS{\phi^{-1}}q^{-1}\)
\]
and equality is achieved by a constant potential $\phi$ if $\nrmS{\phi^{-1}}q^{-1}\,(2-p)+4\,a^2\le1$.\end{proposition}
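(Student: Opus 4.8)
The plan is to derive this Keller-Lieb-Thirring spectral estimate as the \emph{Legendre dual} of the subquadratic interpolation inequality~\eqref{Ineq:Interp} established in Lemma~\ref{Lem:SubMagInterp} and refined in Theorem~\ref{prop:rigiditypless2}. The starting point is the variational characterization of the lowest eigenvalue: since $\psi\mapsto\psi\,e^{iax}$ converts $-\,(\partial_y-\,i\,a)^2$ into $-\,\partial_y^2$ with the twisted periodicity~\eqref{Cdt:Per}, we may write
\[
\lambda_1=\inf_{\nrmS\psi2=1}\Big(\nrmS{\psi'-i\,a\,\psi}2^2+\iSsigma{\phi\,|\psi|^2}\Big)\,.
\]
First I would bound the potential term from below using H\"older's inequality with exponents $q$ and its conjugate $q'=q/(q-1)$. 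The precise pairing is the delicate point: because $p<2$, the relevant exponent is $q'=p/(2-p)$, so writing $|\psi|^2=\phi^{-1}\cdot\phi\,|\psi|^2$ and applying H\"older gives $\nrmS\psi p^2\le\nrmS{\phi^{-1}}q\,\iSsigma{\phi\,|\psi|^2}$, hence
\[
\iSsigma{\phi\,|\psi|^2}\ge\nrmS{\phi^{-1}}q^{-1}\,\nrmS\psi p^2\,.
\]

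Second, I would insert this into the energy and set $\mu:=\nrmS{\phi^{-1}}q^{-1}$, so that for every admissible $\psi$,
\[
\nrmS{\psi'-i\,a\,\psi}2^2+\iSsigma{\phi\,|\psi|^2}\ge\nrmS{\psi'-i\,a\,\psi}2^2+\mu\,\nrmS\psi p^2\ge\lambda_{a,p}(\mu)\,\nrmS\psi2^2\,,
\]
where the last step is exactly~\eqref{Ineq:Interp}. Taking the infimum over $\psi$ with $\nrmS\psi2=1$ yields $\lambda_1\ge\lambda_{a,p}\big(\nrmS{\phi^{-1}}q^{-1}\big)$, which is the claimed bound. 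I would need to check the correct identification $q=p/(2-p)$ (equivalently $1/q+(2-p)/p\cdot$ the right bookkeeping) so that the H\"older step produces the $\mathrm L^p$ norm of $\psi$; this is the one place where the subquadratic nature of the problem makes the duality differ from the superquadratic case, so verifying the exponent arithmetic is where I would be most careful.

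For the equality case, the guiding principle is that the chain of inequalities is saturated precisely when both the H\"older step and the interpolation inequality~\eqref{Ineq:Interp2} are simultaneously equalities. I would argue that a constant potential $\phi\equiv c>0$ reduces the operator to $-\,(\partial_y-\,i\,a)^2+c$, whose lowest eigenvalue is $a^2+c$, attained by the constant eigenfunction; meanwhile for constant $\phi$ one has $\nrmS{\phi^{-1}}q^{-1}=c$, so $\mu=c$, and Theorem~\ref{prop:rigiditypless2}(i) gives $\lambda_{a,p}(\mu)=a^2+\mu=a^2+c$ precisely under the stated threshold $\mu\,(2-p)+4\,a^2\le1$. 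Thus in that regime the lower bound is achieved by constants, closing the equality statement. The main obstacle I anticipate is purely technical rather than conceptual: correctly matching the H\"older exponent to the subquadratic interpolation inequality and handling the convention for vanishing $\psi$ inherited from~\eqref{extendedexpression}, but once the exponent bookkeeping is pinned down the duality argument is entirely parallel to the superquadratic treatment recalled after~\eqref{Ineq:KLT1}.
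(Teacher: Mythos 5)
Your proposal is correct and follows essentially the same route as the paper: Hölder's inequality with $q=p/(2-p)$ to get $\nrmS\psi p^2\le\nrmS{\phi^{-1}}q\iSsigma{\phi\,|\psi|^2}$, insertion into the quadratic form, application of~\eqref{Ineq:Interp}, and the observation that a constant potential saturates Hölder while Theorem~\ref{prop:rigiditypless2}(i) gives $\lambda_{a,p}(\mu)=a^2+\mu$ in the stated regime. The only cosmetic slip is in the narration of the Hölder step (it is $|\psi|^p=\phi^{-p/2}(\phi|\psi|^2)^{p/2}$ with exponents $2/(2-p)$ and $2/p$ that one splits, not $|\psi|^2$), but the displayed inequality you state is the correct one.
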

\begin{proof} Using H\"older's inequality with exponents $2/(2-p)$ and $2/p$, we get that
\[
\nrmS\psi p^2=\(\iSsigma{\phi^{-\frac p2}\,\(\phi\,|\psi|^2\)^\frac p2}\)^{2/p}\le\nrmS{\phi^{-1}}q\,\iSsigma{\phi\,|\psi|^2}
\]
with $q=p/(2-p)$, and with $\mu=\nrmS{\phi^{-1}}q^{-1}$,
\begin{multline}\label{Ineq:KLT2pectral}
\iSsigma{|\psi'-i\,a\,\psi|^2}+\iSsigma{\phi\,|\psi|^2}\ge\iSsigma{|\psi'-i\,a\,\psi|^2}+\mu\,\nrmS\psi p^2\\
\ge\lambda_{a,p}(\mu)\iSsigma{|\psi|^2}\,.
\end{multline}
If $\phi$ is constant, then there is equality in H\"older's inequality.\end{proof}

The spectral estimate~\eqref{Ineq:KLT2pectral} is of a different nature than~\eqref{Ineq:KLT1spectral} because the potential energy and the magnetic kinetic energy have the same sign. By considering the threshold case $\mu\,(2-p)+4\,a^2=1$, we obtain an interesting estimate.
\begin{corollary}\label{Cor:KLT2} Let $a\in[0,1/2]$, $p\in(1,2)$ and $q=p/(2-p)$. If $\phi$ is a nonnegative potential such that $\phi^{-1}\in\mathrm L^q(\S^1)$, then
\begin{multline*}
\iSsigma{|\psi'-i\,a\,\psi|^2}+\frac{1-4\,a^2}{2-p}\,\nrmS{\phi^{-1}}q\iSsigma{\phi\,|\psi|^2}\\
\ge\(\frac{1-4\,a^2}{2-p}+a^2\)\nrmS\psi2^2\quad\forall\,\psi\in\mathrm H^1(\S^1)\,.
\end{multline*}
\end{corollary}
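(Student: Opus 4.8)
The plan is to specialize the duality argument of Proposition~\ref{Prop:KLT} to the threshold value of $\mu$ identified in Theorem~\ref{prop:rigiditypless2} and then to insert H\"older's inequality. Concretely, I would set $\mu:=\frac{1-4\,a^2}{2-p}$, so that $\mu\,(2-p)+4\,a^2=1$. This is precisely the boundary of case~(i) in Theorem~\ref{prop:rigiditypless2}, which therefore gives the exact value $\lambda_{a,p}(\mu)=a^2+\mu$. Feeding this into the optimal interpolation inequality~\eqref{Ineq:Interp} of Lemma~\ref{Lem:SubMagInterp} yields
\[
\nrmS{\psi'-i\,a\,\psi}2^2+\frac{1-4\,a^2}{2-p}\,\nrmS\psi p^2\ge\(a^2+\frac{1-4\,a^2}{2-p}\)\nrmS\psi2^2\quad\forall\,\psi\in\mathrm H^1(\S^1)\,.
\]

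Next I would bound the $\mathrm L^p$ norm from above by the weighted $\mathrm L^2$ norm exactly as in the proof of Proposition~\ref{Prop:KLT}: applying H\"older's inequality with exponents $2/(2-p)$ and $2/p$ and writing $|\psi|^p=\phi^{-p/2}\,(\phi\,|\psi|^2)^{p/2}$ gives
\[
\nrmS\psi p^2\le\nrmS{\phi^{-1}}q\,\iSsigma{\phi\,|\psi|^2}\,,\qquad q=\frac p{2-p}\,.
\]
Since the coefficient $\frac{1-4\,a^2}{2-p}$ is nonnegative for $a\in[0,1/2]$ and $p\in(1,2)$, multiplying this estimate by that coefficient and substituting into the displayed interpolation inequality (replacing the term $\frac{1-4\,a^2}{2-p}\,\nrmS\psi p^2$ by the larger weighted quantity) produces exactly the claimed inequality of Corollary~\ref{Cor:KLT2}.

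I expect no genuine analytical obstacle here: the statement is a direct recombination of an already-proved sharp interpolation inequality with an elementary H\"older step, and the only point to watch is the direction of the estimates, namely that the H\"older bound must \emph{enlarge} the left-hand side, which is why the nonnegativity of the coefficient is used. The sole bookkeeping subtlety is the treatment of the two endpoints $a=0$ and $a=1/2$, for which Theorem~\ref{prop:rigiditypless2} was stated on the open interval $a\in(0,1/2)$; these are covered by the straightforward adaptations to the trivial case $a=0$ and the limit case $a=1/2$ already recorded before Theorem~\ref{prop:rigiditypless2}. In particular, at $a=1/2$ the weighted term drops out and one simply recovers the magnetic ground-state bound $\nrmS{\psi'-i\,a\,\psi}2^2\ge a^2\,\nrmS\psi2^2$.
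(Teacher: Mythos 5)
Your argument is correct and coincides with the paper's own (largely implicit) proof: the corollary is exactly Proposition~\ref{Prop:KLT} read at the threshold value $\mu=\tfrac{1-4a^2}{2-p}$, where Theorem~\ref{prop:rigiditypless2}(i) gives $\lambda_{a,p}(\mu)=a^2+\mu$, combined with the H\"older estimate $\nrmS\psi p^2\le\nrmS{\phi^{-1}}q\iSsigma{\phi\,|\psi|^2}$ from the proof of that proposition. Your remarks on the sign of the coefficient and on the endpoints $a=0$ and $a=1/2$ are the right points to check and are handled correctly.
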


\subsubsection{Magnetic Hardy-type inequalities in dimensions two and three}
Let us denote by $\theta\in[-\pi,\pi)$ the angular coordinate associated with $x\in\R^2$. As in~\cite{doi:10.1063/1.5022121}, we can deduce a Hardy-type inequality for Aharonov-Bohm magnetic potentials in dimension $d=2$.
\begin{corollary}\label{Cor:Hardy2} Let $\mathbf A$ as in~\eqref{AB2}, $a\in[0,1/2]$, $p\in(1,2)$ and $q=p/(2-p)$. If $\phi$ is a nonnegative potential such that $\phi^{-1}\in\mathrm L^q(\S^1)$ with $\nrmS{\phi^{-1}}q=1$, then for any complex valued function $\psi\in\mathrm H^1(\R^2)$ we have
\[
\ir2{|\nA\psi|^2}+\frac{1-4\,a^2}{2-p}\ir2{\frac{\phi(\theta)}{|x|^2}\,|\psi(x)|^2}\ge\(\frac{1-4\,a^2}{2-p}+a^2\)\ir2{\frac{|\psi|^2}{|x|^2}}\,.
\]
\end{corollary}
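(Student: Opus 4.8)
The plan is to reduce the two-dimensional inequality to the one-dimensional Keller--Lieb--Thirring estimate of Corollary~\ref{Cor:KLT2} by slicing $\R^2$ into circles of fixed radius. Writing $x\approx(r,\theta)$ in polar coordinates, recall from the expression of $\nA$ associated with~\eqref{AB2} that
\[
|\nA\psi|^2=\left|\frac{\partial\psi}{\partial r}\right|^2+\frac1{r^2}\,\left|\frac{\partial\psi}{\partial\theta}-i\,a\,\psi\right|^2\,,
\]
and that $dx=r\,dr\,d\theta$. The decisive point is a scaling coincidence: the angular part of $|\nA\psi|^2$ and the two weighted terms $\phi(\theta)\,|x|^{-2}\,|\psi|^2$ and $|x|^{-2}\,|\psi|^2$ all carry the same factor $r^{-2}$, so that after multiplication by the Jacobian $r$ every term in the claimed inequality is integrated against the common radial measure $r^{-1}\,dr$. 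This is precisely why the homogeneous weight $|x|^{-2}$ is the correct one here.

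Next I would fix $r>0$ and regard $\psi_r:=\psi(r,\cdot)$ as a function on $\S^1$. Since $\psi\in\mathrm H^1(\R^2)$, Fubini's theorem guarantees that $\psi_r\in\mathrm H^1(\S^1)$ for almost every $r$, so Corollary~\ref{Cor:KLT2} applies to $\psi_r$. Using the normalization $\nrmS{\phi^{-1}}q=1$, it yields, for a.e.\ $r$, the pointwise (in $r$) inequality
\[
\nrmS{\psi_r'-i\,a\,\psi_r}2^2+\frac{1-4\,a^2}{2-p}\iSsigma{\phi\,|\psi_r|^2}\ge\Big(\frac{1-4\,a^2}{2-p}+a^2\Big)\iSsigma{|\psi_r|^2}\,,
\]
where $\psi_r'=\partial_\theta\psi_r$. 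Because $a\in[0,1/2]$ and $p\in(1,2)$, all three coefficients $1-4\,a^2$, $(2-p)^{-1}$ and $a^2$ are nonnegative, so every term appearing above is itself a nonnegative quantity.

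Then I would multiply this pointwise inequality by $2\pi/r>0$ and integrate over $r\in(0,+\infty)$. Recalling $d\sigma=(2\pi)^{-1}\,d\theta$, Tonelli's theorem (all integrands being nonnegative) lets me identify the three resulting integrals: $\int_0^{+\infty}(2\pi/r)\,\nrmS{\psi_r'-i\,a\,\psi_r}2^2\,dr$ equals $\ir2{|x|^{-2}\,|\partial_\theta\psi-i\,a\,\psi|^2}$, while the two weighted integrals become $\ir2{\phi(\theta)\,|x|^{-2}\,|\psi|^2}$ and $\ir2{|x|^{-2}\,|\psi|^2}$ respectively. This already gives the asserted inequality, but with $\ir2{|x|^{-2}\,|\partial_\theta\psi-i\,a\,\psi|^2}$ in place of $\ir2{|\nA\psi|^2}$ on the left-hand side. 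Finally, since $\ir2{|\nA\psi|^2}=\ir2{|\partial_r\psi|^2}+\ir2{|x|^{-2}\,|\partial_\theta\psi-i\,a\,\psi|^2}$ and the dropped radial term $\ir2{|\partial_r\psi|^2}$ is nonnegative, replacing the angular energy by the full magnetic energy only increases the left-hand side and concludes the proof. The argument requires no new analysis beyond Corollary~\ref{Cor:KLT2}; the only technical point to watch is the slicing step, where one checks that a.e.\ radial slice is admissible and that the integrals split by Tonelli, so that the case $\ir2{|\nA\psi|^2}=+\infty$ is covered trivially and all quantities are otherwise finite.
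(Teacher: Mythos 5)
Your proposal is correct and is exactly the argument the paper intends: the paper gives no written proof of Corollary~\ref{Cor:Hardy2} beyond the remark ``as in~\cite{doi:10.1063/1.5022121}'', and the implicit proof is precisely your slicing of $\R^2$ into circles, applying Corollary~\ref{Cor:KLT2} to each slice, integrating against $r^{-1}\,dr$, and discarding the nonnegative radial term $\ir2{|\partial_r\psi|^2}$.
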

Let us consider cylindrical coordinates $(\rho,\theta,z)\in\R^+\times[0,2\pi)\times\R$ such that $|x|^2=\rho^2+z^2$. In this system of coordinates the magnetic kinetic energy is
\[
\ird{|\nA\psi|^2}=\ird{\(\big|\tfrac{\partial\psi}{\partial\rho}\big|^2+\tfrac1{\rho^2}\big|\tfrac{\partial\psi}{\partial\theta}-\,i\,a\,\psi\big|^2+\big|\tfrac{\partial\psi}{\partial z}\big|^2\)}
\]
where $d\mu:=\rho\,d\rho\,d\theta\,dz$. The following result was proved in~\cite[Section~2.2]{MR4134390}.
\begin{lemma}\label{lem:Hardy} For any $\psi\in\mathrm H^1(\R^3)$, we have
\[
\irdmu{\(\big|\tfrac{\partial\psi}{\partial\rho}\big|^2+\big|\tfrac{\partial\psi}{\partial z}\big|^2\)}\ge\frac14\irdmu{\frac{|\psi|^2}{\rho^2+z^2}}\quad\forall\,\psi\in\mathrm H^1(\R^3)\,.
\]
\end{lemma}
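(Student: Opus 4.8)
The statement is a two-dimensional Hardy inequality in disguise: the measure $d\mu=\rho\,d\rho\,d\theta\,dz$ factorizes, and after integrating out the angular variable $\theta\in[0,2\pi)$ both sides become integrals over the half-plane $(\rho,z)\in\R^+\times\R$ against the weight $\rho\,d\rho\,dz$. The plan is to fix $\theta$ (equivalently, absorb the $2\pi$ factor on both sides) and work on the half-plane. Introducing polar coordinates in the $(\rho,z)$-plane, namely $\rho=r\,\cos\alpha$, $z=r\,\sin\alpha$ with $r>0$ and $\alpha\in(-\pi/2,\pi/2)$ so that $\rho^2+z^2=r^2$, turns the right-hand side weight $1/(\rho^2+z^2)$ into $1/r^2$. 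The Jacobian contributes $r\,dr\,d\alpha$, and the extra factor $\rho=r\cos\alpha$ from $d\mu$ gives an overall measure $r^2\cos\alpha\,dr\,d\alpha$ on the half-plane.

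Next I would reduce to a one-dimensional radial inequality in $r$. Since $\big|\partial_\rho\psi\big|^2+\big|\partial_z\psi\big|^2=|\nabla_{(\rho,z)}\psi|^2\ge|\partial_r\psi|^2$, where $\partial_r$ is the radial derivative in the $(\rho,z)$-plane, it suffices to prove
\[
\int_0^{+\infty}\big|\partial_r\psi\big|^2\,r^2\,dr\ge\frac14\int_0^{+\infty}\frac{|\psi|^2}{r^2}\,r^2\,dr
\]
for each fixed $\alpha$ and $\theta$, and then integrate against $\cos\alpha\,d\alpha$ and $d\theta$. The weight $r^2\,dr$ is exactly the three-dimensional radial measure, so this is the classical one-dimensional Hardy inequality $\int_0^\infty|g'|^2\,r^2\,dr\ge\frac14\int_0^\infty r^{-2}\,|g|^2\,r^2\,dr$, with the sharp constant $\tfrac14=\big((3-2)/2\big)^2$. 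This can be established by the standard completion-of-the-square (or integration by parts) argument: writing $\int_0^\infty\big|\partial_r\psi+\tfrac1{2r}\psi\big|^2\,r^2\,dr\ge0$ and expanding, the cross term $\int_0^\infty\mathrm{Re}\big(\bar\psi\,\partial_r\psi\big)\,r\,dr=\tfrac12\int_0^\infty\partial_r|\psi|^2\,r\,dr=-\tfrac12\int_0^\infty|\psi|^2\,dr$ after integration by parts, producing exactly the constant $1/4$.

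The main obstacle is the justification of the boundary terms in that integration by parts, i.e. controlling $r\,|\psi|^2$ as $r\to0$ and $r\to\infty$. For $\psi\in\mathrm H^1(\R^3)$ these vanish in a suitable averaged sense, but to keep the argument clean I would first prove the inequality for $\psi\in C_c^\infty(\R^3\setminus\{0\})$, where all boundary contributions vanish identically, and then extend to $\mathrm H^1(\R^3)$ by density, noting that $C_c^\infty(\R^3\setminus\{0\})$ is dense in $\mathrm H^1(\R^3)$ in dimension $d=3$ (the point singularity has zero $\mathrm H^1$-capacity). Both sides are continuous with respect to the $\mathrm H^1$ norm along this approximation, with the right-hand side handled by Fatou's lemma, so the inequality passes to the limit. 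A minor technical point is that the reduction $|\partial_\rho\psi|^2+|\partial_z\psi|^2\ge|\partial_r\psi|^2$ discards the angular-in-$\alpha$ part of the gradient; this is legitimate since we only need a lower bound, and it is precisely why the constant $1/4$ is attained only in the limit by radially-symmetric (in $(\rho,z)$) profiles behaving like $r^{-1/2}$.
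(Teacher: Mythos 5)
Your proof is correct and is essentially the paper's own argument: the paper completes the square $\big|\partial_\rho\psi+\tfrac{\rho\,\psi}{2(\rho^2+z^2)}\big|^2+\big|\partial_z\psi+\tfrac{z\,\psi}{2(\rho^2+z^2)}\big|^2\ge0$ directly in the $(\rho,z)$ variables and integrates the cross terms by parts, which is exactly your completion of the square written before passing to polar coordinates (your discarded angular term $r^{-2}|\partial_\alpha\psi|^2$ is simply what remains inside the paper's square). Your reduction to the classical radial Hardy inequality and the density/boundary-term discussion are a more explicit rendering of the same computation, which the paper leaves at the level of ``assume $\psi$ smooth with compact support.''
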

\begin{proof} We give an elementary proof. Assume that $\psi$ is smooth and has compact support. The inequality follows from the expansion of the square
\[
\irdmu{\(\big|\tfrac{\partial\psi}{\partial\rho}+\tfrac{\rho\,\psi}{2\,(\rho^2+z^2)}\big|^2+\big|\tfrac{\partial\psi}{\partial z}+\tfrac{z\,\psi}{2\,(\rho^2+z^2)}\big|^2\)}\ge0\quad\forall\,\psi\in\mathrm H^1(\R^3)
\]
and of an integration by parts of the cross terms.\end{proof}

Lemma~\ref{lem:Hardy} is an improved version of the standard Hardy inequality in the sense that the left-hand side of the inequality does not involve the angular part of the kinetic energy. A consequence of Corollary~\ref{Cor:KLT2} and Lemma~\ref{lem:Hardy} is a Hardy-like estimate in dimension $d=3$. For the angular part we argue as in Corollary~\ref{Cor:Hardy2}. Details of the proof are left to the reader.
\begin{theorem}\label{Thm:ImprovedHardy2} Let $\mathbf A$ as in~\eqref{AB3}, $a\in[0,1/2]$, $p\in(1,2)$ and $q=p/(2-p)$. If $\phi$ is a potential such that $\phi^{-1}\in\mathrm L^q(\S^1)$ with $\nrmS{\phi^{-1}}q=1$, then for any complex valued function $\psi\in\mathrm H^1(\R^3)$ we have
\begin{multline*}
\ird{|\nA\psi|^2}+\frac{1-4\,a^2}{2-p}\ird{\frac{\phi(\theta)}{\rho^2}\,|\psi(x)|^2}\\
\ge\frac14\ird{\frac{|\psi|^2}{|x|^2}}+\(\frac{1-4\,a^2}{2-p}+a^2\)\ird{\frac{|\psi|^2}{\rho^2}}\,.
\end{multline*}
\end{theorem}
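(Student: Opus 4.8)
The plan is to split the magnetic kinetic energy in cylindrical coordinates into a radial-vertical part and an angular part, and to estimate each separately. Using the expression of $|\nA\psi|^2$ recorded before Lemma~\ref{lem:Hardy}, write
\[
\ird{|\nA\psi|^2}=\ird{\(\big|\tfrac{\partial\psi}{\partial\rho}\big|^2+\big|\tfrac{\partial\psi}{\partial z}\big|^2\)}+\ird{\tfrac1{\rho^2}\,\big|\tfrac{\partial\psi}{\partial\theta}-\,i\,a\,\psi\big|^2}\,.
\]
For the first term, Lemma~\ref{lem:Hardy} yields at once the contribution $\tfrac14\ird{|\psi|^2/|x|^2}$ on the right-hand side, since $\rho^2+z^2=|x|^2$. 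It then remains to estimate the angular term, and the two estimates add up to the claim.

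For the angular term, I would freeze $(\rho,z)\in\R^+\times\R$ and view $\theta\mapsto\psi(\rho,\theta,z)$ as a function on $\S^1$. Since $\nrmS{\phi^{-1}}q=1$, Corollary~\ref{Cor:KLT2} applied to this slice gives, for almost every $(\rho,z)$,
\[
\iSsigma{|\psi_\theta-\,i\,a\,\psi|^2}+\frac{1-4\,a^2}{2-p}\iSsigma{\phi\,|\psi|^2}\ge\(\frac{1-4\,a^2}{2-p}+a^2\)\iSsigma{|\psi|^2}\,.
\]
Writing $d\mu=\rho\,d\rho\,d\theta\,dz$ and passing from $d\theta$ to the normalized measure $d\sigma=d\theta/(2\pi)$, each angular integral $\ird{\rho^{-2}(\cdots)}$ equals $2\pi\int_{\R^+\times\R}\rho^{-1}\iSsigma{\cdots}\,d\rho\,dz$. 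Multiplying the slice inequality by $2\pi\,\rho^{-1}$ and integrating in $(\rho,z)$ therefore produces exactly
\[
\ird{\tfrac1{\rho^2}\,|\psi_\theta-\,i\,a\,\psi|^2}+\frac{1-4\,a^2}{2-p}\ird{\frac{\phi(\theta)}{\rho^2}\,|\psi|^2}\ge\(\frac{1-4\,a^2}{2-p}+a^2\)\ird{\frac{|\psi|^2}{\rho^2}}\,,
\]
the common factor $2\pi$ being irrelevant. Adding this to the radial-vertical estimate from Lemma~\ref{lem:Hardy} gives the stated inequality.

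The step requiring care is the slicing (Fubini) argument that legitimizes applying Corollary~\ref{Cor:KLT2} fiber by fiber. I would first prove the inequality for $\psi$ smooth and compactly supported away from the axis $\rho=0$, so that each slice $\psi(\rho,\cdot,z)$ lies in $\mathrm H^1(\S^1)$ and all integrals are finite, and then pass to general $\psi\in\mathrm H^1(\R^3)$ by density. The slice hypothesis of Corollary~\ref{Cor:KLT2} holds for a.e.\ $(\rho,z)$ precisely because the angular part of $\ird{|\nA\psi|^2}$ is finite, so no additional estimate is needed. The clean decoupling of the two pieces is exactly what Lemma~\ref{lem:Hardy} provides: the weights $|x|^{-2}$ and $\rho^{-2}$ live on the radial-vertical and angular parts of the kinetic energy respectively, and never compete.
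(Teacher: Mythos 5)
Your proof is correct and follows exactly the route the paper intends: the paper states that the theorem "is a consequence of Corollary~\ref{Cor:KLT2} and Lemma~\ref{lem:Hardy}," with the angular part handled slice-wise as in Corollary~\ref{Cor:Hardy2}, and leaves the details to the reader. Your decomposition of the kinetic energy, the fiber-wise application of Corollary~\ref{Cor:KLT2} with the normalization $\nrmS{\phi^{-1}}q=1$, and the density argument fill in precisely those details.
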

A simple case is $\phi\equiv1$, for which we obtain that
\[
\ird{|\nA\psi|^2}\ge\frac14\ird{\frac{|\psi|^2}{|x|^2}}+a^2\ird{\frac{|\psi|^2}{|\rho|^2}}\quad\forall\,\psi\in\HA(\R^3)\,.
\]

\section{Aharonov-Bohm magnetic interpolation inequalities in \texorpdfstring{$\R^2$}{R2}}\label{Sec:Interpolation $p>2$}

Magnetic interpolation inequalities on $\R^2$ are considered without weights in Section~\ref{Sec:UnweightedR2}. Weights are then introduced as in~\cite{HS-magnetic-sharp} in order to prove the new magnetic Caffarelli-Kohn-Nirenberg inequality of Corollary~\ref{Cor:magneticCaffarelli-Kohn-Nirenberg} in Section~\ref{Sec:Hardy-Sobolev} and a magnetic Hardy inequality on $\R^2$ in Theorem~\ref{Cor:Prop:Hardy2} (Section~\ref{Subsec:4.3}).

\subsection{Magnetic interpolation inequalities without weights}\label{Sec:UnweightedR2}

Let us consider on $\R^2$ the Aharonov-Bohm magnetic potential $\mathbf A$ given by~\eqref{AB2}. Using the \emph{diamagnetic inequality}
\[
|\nA\psi|^2\ge\big|\nabla|\psi|\big|^2\quad\mbox{a.e. in }\R^2
\]
and, for any $p\in(2,\infty)$ and $\lambda>0$, the Gagliardo-Nirenberg inequality
\be{GN2}
\nrmRdeux{\nabla\psi}2^2+\lambda\,\nrmRdeux\psi p^2\ge\mathsf C_p\,\lambda^\frac p2\,\nrmRdeux\psi2^2\quad\forall\,\psi\in\mathrm H^1(\R^2)\cap\mathrm L^p(\R^2)
\ee
with optimal constant $\mathsf C_p$, we deduce that
\be{Ineq:InterpL2}
\nrmRdeux{\nA\psi}2^2+\lambda\,\nrmRdeux\psi2^2\ge\mu_{a,p}(\lambda)\,\nrmRdeux\psi p^2\quad\forall\,\psi\in\mathrm H_a^1(\R^2)\,.
\ee
See~\cite[Section~3]{MR3784917} for details. Here $\mu_{a,p}(\lambda)$ is the optimal constant in~\eqref{Ineq:InterpL2} for any given $a$, $p$ and $\lambda$ and, as a function of $\lambda$, $\mu_{a,p}(\lambda)$ is monotone increasing and concave. Notice that right-hand sides in~\eqref{GN2} and~\eqref{Ineq:InterpL2} involve norms with respect to Lebesgue's measure. It turns out that $\mu_{a,p}(\lambda)$ is equal to the best constant of the non-magnetic problem.
\begin{proposition}\label{Prop:nomin} Let $a\in\R$ and $p\in(2,\infty)$. The optimal constant in~\eqref{Ineq:InterpL2} is
\[
\mu_{a,p}(\lambda)=\mathsf C_p\,\lambda^\frac p2\quad\forall\,\lambda>0
\]
and equality is not achieved on $\mathrm H^1(\R^2)\cap\mathrm L^p(\R^2)$ if $a\in\R\setminus\Z$.\end{proposition}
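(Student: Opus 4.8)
The plan is to prove the identity $\mu_{a,p}(\lambda)=\mathsf C_p\,\lambda^{p/2}$ as a two-sided bound and then to rule out attainment. The lower bound $\mu_{a,p}(\lambda)\ge\mathsf C_p\,\lambda^{p/2}$ is already contained in the discussion preceding the statement: the diamagnetic inequality gives $\nrmRdeux{\nA\psi}2^2\ge\nrmRdeux{\nabla|\psi|}2^2$, and applying~\eqref{GN2} to $|\psi|$ yields the claim since $\nrmRdeux{|\psi|}q=\nrmRdeux\psi q$ for every $q$. So the whole content is the reverse inequality together with the non-attainment, and for both I would exploit the fact that $\mathsf C_p\,\lambda^{p/2}$ is the optimal constant of the non-magnetic inequality~\eqref{GN2}, attained by a positive, radially symmetric, exponentially decaying soliton $U$ solving the associated Euler--Lagrange equation (unique up to translation).

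For the upper bound $\mu_{a,p}(\lambda)\le\mathsf C_p\,\lambda^{p/2}$ the key observation is that the Aharonov--Bohm field is curl-free away from the origin: since $\nabla\theta=\tfrac1r\,\mathbf e_\theta$, one has $\mathbf A=a\,\nabla\theta$ on any simply connected $\Omega\subset\R^2\setminus\{0\}$, so that $\mathbf A$ is a \emph{pure gauge} there. First I would fix a smooth, compactly supported $u$ whose non-magnetic quotient $\big(\nrmRdeux{\nabla u}2^2+\lambda\,\nrmRdeux u2^2\big)/\nrmRdeux up^2$ is within $\varepsilon$ of $\mathsf C_p\,\lambda^{p/2}$, which is possible by density of $C_c^\infty$. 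Translating it to $u(\cdot-x_0)$ with $|x_0|$ larger than the radius of its support, the support becomes a ball $B$ that neither meets nor encloses the origin; on $B$ the angle $\theta$ admits a single-valued smooth branch. Setting $\psi:=e^{-i\,a\,\theta}\,u(\cdot-x_0)$ on $B$ and $\psi:=0$ elsewhere produces a legitimate element of $\HA(\R^2)$ with $\nA\psi=e^{-i\,a\,\theta}\,\nabla u(\cdot-x_0)$, hence $|\nA\psi|=|\nabla u(\cdot-x_0)|$ and $|\psi|=|u(\cdot-x_0)|$ pointwise. The magnetic quotient of $\psi$ therefore equals the non-magnetic quotient of $u$, which is $\le\mathsf C_p\,\lambda^{p/2}+\varepsilon$; letting $\varepsilon\to0$ gives the bound, and combined with the lower bound it yields $\mu_{a,p}(\lambda)=\mathsf C_p\,\lambda^{p/2}$.

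To prove non-attainment when $a\in\R\setminus\Z$, suppose some $\psi\neq0$ achieves equality in~\eqref{Ineq:InterpL2}. Then the chain
\[
\mathsf C_p\,\lambda^{p/2}\,\nrmRdeux\psi p^2=\nrmRdeux{\nA\psi}2^2+\lambda\,\nrmRdeux\psi2^2\ge\nrmRdeux{\nabla|\psi|}2^2+\lambda\,\nrmRdeux{|\psi|}2^2\ge\mathsf C_p\,\lambda^{p/2}\,\nrmRdeux\psi p^2
\]
forces equality both in the diamagnetic inequality and in~\eqref{GN2}. Equality in~\eqref{GN2} makes $|\psi|$ a nonnegative optimizer, hence, by the strong maximum principle for the Euler--Lagrange equation, a strictly positive radial soliton: $|\psi|>0$ on all of $\R^2$. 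Writing $\psi=|\psi|\,e^{i\phi}$ and using $|\nA\psi|^2=\big|\nabla|\psi|\big|^2+|\psi|^2\,|\nabla\phi+\mathbf A|^2$, equality in the diamagnetic inequality forces $\nabla\phi=-\mathbf A=-a\,\nabla\theta$ wherever $|\psi|>0$, that is, everywhere on $\R^2\setminus\{0\}$. Thus $\phi+a\,\theta$ is constant on the connected set $\R^2\setminus\{0\}$, so $\psi=c\,|\psi|\,e^{-i\,a\,\theta}$ for a unimodular constant $c$. Single-valuedness of $\psi$ around the origin then requires $e^{-i\,a\,\theta}$ to be single-valued, i.e.\ $a\in\Z$, contradicting $a\notin\Z$.

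The main obstacle is the upper bound: the whole point of the proposition is that the Aharonov--Bohm flux produces \emph{no} improvement of the optimal constant, and this rests on the pure-gauge nature of $\mathbf A$ away from the flux line together with the freedom to slide an almost-optimal concentrated profile into a region that does not encircle the origin. The non-attainment is then a rigidity statement whose heart is the topological winding obstruction: the phase dictated by equality in the diamagnetic inequality is globally consistent exactly when the flux $a$ is an integer.
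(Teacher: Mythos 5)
Your proof is correct, and it follows the paper's overall strategy (diamagnetic inequality for the lower bound; a test function pushed away from the flux line for the upper bound; positivity of the Gagliardo--Nirenberg optimizer plus a phase-winding obstruction for non-attainment), but the execution of the upper bound is genuinely different. The paper simply translates an exact optimizer of the non-magnetic inequality, $\psi_n(x)=\psi(x+n\,\mathsf e)$, and lets $n\to\infty$, so that the cross term and the $|\mathbf A|^2$ term vanish in the limit because $|\mathbf A|\sim|a|/|x|$ is small on the bulk of the translated profile; this requires an (easy but unstated) estimate on the tails. You instead take a compactly supported near-optimizer, translate it so that its support neither meets nor encircles the origin, and remove $\mathbf A=a\,\nabla\theta$ \emph{exactly} by the gauge factor $e^{-i\,a\,\theta}$ on that simply connected set; this makes the magnetic and non-magnetic quotients literally equal, at the price of an extra $\varepsilon$ from the density step. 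Your version is arguably cleaner and makes explicit that the mechanism is the local pure-gauge nature of the Aharonov--Bohm potential. For non-attainment, your polar-decomposition argument ($\nabla\phi=-\mathbf A$ on $\R^2\setminus\{0\}$, hence $a\in\Z$ by single-valuedness) is the same rigidity statement as the paper's substitution $\phi=e^{i\,a\,\theta}\psi$ combined with the constancy of the phase of Gagliardo--Nirenberg optimizers, just organized differently; the only point to keep in mind is the standard justification that an optimizer is regular enough to admit the decomposition $|\nA\psi|^2=\big|\nabla|\psi|\big|^2+|\psi|^2\,|\nabla\phi+\mathbf A|^2$ where $|\psi|>0$, which elliptic regularity provides.
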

\begin{proof} By construction we know that $\mu_{a,p}(\lambda)\ge\mathsf C_p\,\lambda^{p/2}$. By taking an optimal function $\psi$ for~\eqref{GN2} and considering $\psi_n(x)=\psi(x+n\,\mathsf e)$ with $n\in\N$ and $\mathsf e\in\S^1$, we see that there is equality.

Let us prove by contradiction that equality is not achieved. If $\psi\in\mathrm H^1(\R^2)\cap\mathrm L^p(\R^2)$ is optimal, let $\phi=e^{\,i\,a\,\theta}\psi$. Since
\[
\nrmRdeux{\nA\psi}2^2=\ir2{|\partial_r\psi|^2+\frac{|\partial_\theta\phi|^2}{|x|^2}}
\]
and equality in~\eqref{GN2} is achieved by functions with a constant phase only, this means that $\partial_\theta\phi=0$ a.e., a contradiction with the periodicity of $\psi$ with respect to $\theta\in[0,2\pi)$ if $a\not\in\Z$.\end{proof}

Proposition~\ref{Prop:nomin} means that the Aharonov-Bohm magnetic potential plays no role in non-weighted interpolation inequalities. This is why it is natural to introduce weighted norms with adapted scaling properties.

\subsection{Magnetic Caffarelli-Kohn-Nirenberg inequalities in \texorpdfstring{$\R^2$}{R2}}\label{Sec:Hardy-Sobolev}

The \emph{Caffarelli-Kohn-Nirenberg inequality}
\be{CKN}
\ir2{\frac{|\nabla v|^2}{|x|^{2\mathsf a}}}\ge\mathsf C_{\mathsf a}\(\ir2{\frac{|v|^p}{|x|^{\mathsf b\,p}}}\)^{2/p}\quad\forall\,v\in\mathcal D(\R^2)
\ee
has been established in~\cite{Caffarelli-Kohn-Nirenberg-84} and, earlier, in~\cite{Ilyin}. The exponent $\mathsf b=\mathsf a+2/p$ is determined by the scaling invariance and as $p$ varies in $(2,\infty)$, the parameters $\mathsf a$ and $\mathsf b$ are such that $\mathsf a<\mathsf b\le\mathsf a+1$ and $\mathsf a<0$. The case $\mathsf a>0$ can be considered in an appropriate functional space after a Kelvin-type transformation: see~\cite{Catrina-Wang-01,DELT09}, but we will not consider this case here. As noticed for instance in~\cite{DELT09}, by considering $v(x)=|x|^{\mathsf a}\,u(x)$, Ineq.~\eqref{CKN} is equivalent to the \emph{Hardy-Sobolev inequality}
\be{CKN2}
\ir2{|\nabla u|^2}+\mathsf a^2\ir2{\frac{|u|^2}{|x|^2}}\ge\mathsf C_{\mathsf a}\(\ir2{\frac{|u|^p}{|x|^2}}\)^{2/p}\quad\forall\,u\in\mathcal D(\R^2)\,.
\ee
The optimal functions for~\eqref{CKN} are radially symmetric if and only if
\[
\mathsf b\ge\mathsf b_{\rm FS}(\mathsf a):=\mathsf a-\frac{\mathsf a}{\sqrt{1+\mathsf a^2}}
\]
according to~\cite{Felli-Schneider-03,MR3570296}. We refer to~\cite{HS-magnetic-sharp} for more details and for the proof of the following magnetic Hardy-Sobolev inequality.
\begin{theorem}[\cite{HS-magnetic-sharp}]\label{Thm:MgnHS} Let $a\in[0,1/2]$, $\mathbf A$ as in~\eqref{AB2} and $p>2$. For any $\lambda>-\,a^2$, there is an optimal function $\lambda\mapsto\mu(\lambda)$ which is monotone increasing and concave such that
\be{Ineq:MgnHS}
\ir2{|\nA\psi|^2}+\lambda\ir2{\frac{|\psi|^2}{|x|^2}}\ge\mu(\lambda)\(\ir2{\frac{|\psi|^p}{|x|^2}}\)^{2/p}\quad\forall\,\psi\in\HA(\R^2)\,.
\ee
If $a\in [0, 1/2)$, the optimal function in~\eqref{Ineq:MgnHS} is
\[
\psi(x)=\(|x|^\alpha+|x|^{-\alpha}\)^{-\frac2{p-2}}\quad\forall\,x\in\R^2\,,\quad\mbox{with}\quad\alpha=\frac{p-2}2\,\sqrt{\lambda+a^2}\,,
\]
up to a scaling and a multiplication by a constant, if
\[
\lambda\le\lambda_\star:=4\,\frac{1-4\,a^2}{p^2-4}-\,a^2\,.
\]
Conversely, if $a\in [0, 1/2]$ and $\lambda>\lambda_\bullet$ with
\[\label{Eqn:SB}
\lambda_\bullet:=\frac{8\(\sqrt{p^4-a^2\,(p-2)^2\,(p+2)\,(3\,p-2)}+2\)-4\,p\,(p+4)}{(p-2)^3\,(p+2)}-\,a^2\,,
\]
there is symmetry breaking, \emph{i.e.}, the optimal functions are not radially symmetric.
\end{theorem}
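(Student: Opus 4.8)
The plan is to kill the scaling invariance of~\eqref{Ineq:MgnHS} by an Emden--Fowler change of variables. Setting $s=\log|x|$ and keeping the angular variable $\theta\in\S^1$, the map $x\mapsto(s,\theta)$ identifies $\R^2\setminus\{0\}$ with the flat cylinder $\mathcal C=\R\times\S^1$; the singular measure $|x|^{-2}\,dx$ becomes $ds\,d\theta$ and one checks that $|\nA\psi|^2\,dx=\big(|\partial_s\psi|^2+|\partial_\theta\psi-i\,a\,\psi|^2\big)\,ds\,d\theta$, so that~\eqref{Ineq:MgnHS} is equivalent to the translation-invariant inequality
\[
\int_{\mathcal C}\big(|\partial_s\psi|^2+|\partial_\theta\psi-i\,a\,\psi|^2+\lambda\,|\psi|^2\big)\,ds\,d\theta\ge\mu(\lambda)\Big(\int_{\mathcal C}|\psi|^p\,ds\,d\theta\Big)^{2/p}
\]
on $\mathcal C$, with the magnetic field now acting only along the compact factor $\S^1$. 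I would carry out the whole analysis on $\mathcal C$.

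The qualitative properties of $\mu$ are soft: $\mu(\lambda)$ is the infimum over admissible $\psi$ of the quotient of the left-hand side by $(\int_{\mathcal C}|\psi|^p)^{2/p}$, and for fixed $\psi$ this quotient is affine and increasing in $\lambda$; an infimum of affine increasing functions is concave and monotone increasing. The magnetic Hardy bound $\int_{\S^1}|\partial_\theta\psi-i\,a\,\psi|^2\,d\theta\ge a^2\int_{\S^1}|\psi|^2\,d\theta$ (valid for $a\in[0,1/2]$ since $\min_{k\in\Z}(k-a)^2=a^2$) shows that the functional is positive for $\lambda>-a^2$, which fixes the natural left endpoint. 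For the existence of an optimal function I would run a concentration--compactness argument on $\mathcal C$: the embedding $\mathrm H^1(\mathcal C)\hookrightarrow\mathrm L^p(\mathcal C)$ is continuous for every $p>2$ and its only defect of compactness is invariance under translations in $s$; positivity of the quotient rules out vanishing and strict subadditivity rules out dichotomy, so a suitably $s$-translated minimizing sequence converges to a nontrivial optimizer.

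For the explicit profile when $\lambda\le\lambda_\star$, the strategy is to reduce to $\theta$-independent functions. If an optimizer depends only on $s$, the angular term collapses to $a^2\,|\psi|^2$ and the Euler--Lagrange equation becomes $-\psi''+(\lambda+a^2)\,\psi=\Lambda\,\psi^{p-1}$ on $\R$, whose positive solution is $\psi(s)\propto\cosh(\beta\,s)^{-2/(p-2)}$ with $\beta=\tfrac{p-2}2\sqrt{\lambda+a^2}$; undoing $s=\log|x|$ replaces $\cosh(\beta s)$ by $\tfrac12\big(|x|^\alpha+|x|^{-\alpha}\big)$ with $\alpha=\beta$, which is exactly the claimed function. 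The real content is to prove that the optimizer is $\theta$-independent, and here I would mirror the magnetic-ring computation behind Proposition~\ref{prop:rigidity}: after passing to the modulus and the phase-optimized magnetic energy, split the Dirichlet energy as $(1-4\,a^2)$ times a non-magnetic Hardy--Sobolev energy plus $4\,a^2$ times a magnetic term that is bounded below by the magnetic Hardy inequality, and then invoke the sharp non-magnetic symmetry result of~\cite{Felli-Schneider-03,MR3570296} for~\eqref{CKN2}, whose threshold is the Hardy coefficient $4/(p^2-4)$. Applied to the effective coefficient $(\lambda+a^2)/(1-4\,a^2)$, the condition $(\lambda+a^2)/(1-4\,a^2)\le 4/(p^2-4)$ is precisely $\lambda\le\lambda_\star$. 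This symmetry step is the \emph{main obstacle}, exactly as stressed in the introduction, since neither rearrangement nor the carr\'e du champ flow interacts cleanly with the magnetic field.

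For symmetry breaking when $\lambda>\lambda_\bullet$, I would show that the radial soliton $\psi_*$ has a direction of negative second variation. Writing a perturbation in the angular mode $k$ as $\xi=\xi_1+i\,\xi_2$, the Hessian of $\int_{\mathcal C}|\psi|^p$ is not holomorphic and splits the amplitude part $\xi_1$ (weight $p(p-1)\,\psi_*^{p-2}$) from the phase part $\xi_2$ (weight $p\,\psi_*^{p-2}$), while the magnetic angular energy $(\partial_\theta\xi_1+a\,\xi_2)^2+(\partial_\theta\xi_2-a\,\xi_1)^2$ couples $\xi_1$ and $\xi_2$ through $a$. Diagonalising this coupled system in the mode $k$ reduces the question to the lowest eigenvalue of a one-dimensional Schr\"odinger operator with a P\"oschl--Teller potential $\propto\cosh^{-2}(\beta s)$, which is explicit. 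The first mode to generate a negative eigenvalue is $k=1$, and equating that eigenvalue to zero yields the threshold $\lambda_\bullet$. I expect the bookkeeping of the amplitude--phase coupling --- the genuinely magnetic feature, absent when $a=0$ --- to be the delicate point; it is also what opens the gap between $\lambda_\bullet$ and $\lambda_\star$, the two thresholds coinciding at $4/(p^2-4)$ when $a=0$.
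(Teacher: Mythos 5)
The paper itself does not prove Theorem~\ref{Thm:MgnHS}: it is quoted from~\cite{HS-magnetic-sharp}, so I am comparing your proposal against the argument of that reference. Your framework is the right one and coincides with the one used there: the Emden--Fowler change of variables to the cylinder $\R\times\S^1$, the soft concavity/monotonicity of $\mu$, concentration--compactness for existence, the identification of the $\theta$-independent soliton $\cosh(\beta s)^{-2/(p-2)}$ with $\beta=\tfrac{p-2}2\sqrt{\lambda+a^2}$, and, for symmetry breaking, the linearization around that soliton with the amplitude--phase coupling in the $k=1$ mode and a P\"oschl--Teller spectral computation leading to~$\lambda_\bullet$ (which indeed reduces to $4/(p^2-4)$ at $a=0$, as you check). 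All of that is sound.

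The genuine gap is in the symmetry proof for $\lambda\le\lambda_\star$, which is the heart of the theorem. Your plan is to write the energy as $(1-4\,a^2)$ times a non-magnetic Hardy--Sobolev energy plus $4\,a^2$ times a magnetic piece controlled by the magnetic Hardy inequality, and then invoke the sharp non-magnetic symmetry result at the effective parameter $\Lambda=(\lambda+a^2)/(1-4\,a^2)$. This splitting is sharp on the ring only because there the optimal constant is \emph{linear} in the spectral parameter throughout the symmetry range ($\mu_{0,p}(\Lambda)=\Lambda$), so the factors $(1-4\,a^2)$ cancel exactly. On the cylinder the sharp non-magnetic constant scales like $\mathsf C(\Lambda)=\kappa\,\Lambda^{\gamma}$ with $\gamma=\tfrac12+\tfrac1p\in(0,1)$, and since $\mathsf C$ is strictly concave with $\mathsf C(0)=0$ one has $(1-t)\,\mathsf C\big(\tfrac{\lambda+t\,a^2}{1-t}\big)<\mathsf C(\lambda+a^2)$ for every $t\in(0,1)$; in particular your bound gives at best $(1-4\,a^2)^{1-\gamma}\,\mathsf C(\lambda+a^2)$, which is \emph{strictly below} the value of the quotient at the claimed optimizer whenever $a\in(0,1/2)$. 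So the argument proves a non-optimal lower bound of the type of Proposition~\ref{prop:interpolation} (which the present paper explicitly presents as non-sharp), not the optimality of the radial soliton. That the Felli--Schneider threshold applied to $(\lambda+a^2)/(1-4\,a^2)$ happens to reproduce the number $\lambda_\star$ is suggestive bookkeeping, not a proof. The actual argument of~\cite{HS-magnetic-sharp} has to work considerably harder: after eliminating the phase it couples the magnetic ring inequality applied slice-wise in $\theta$ with a reduction to a one-dimensional problem in $s$, rather than a global convex splitting. You correctly flag the symmetry step as the main obstacle; as written, your proposal does not overcome it.
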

An explicit computation shows that $\lambda_\star<\lambda_\bullet$ for any $a\in(0,1/2)$, and so there is a zone where we do not know whether the optimal functions in~\eqref{Ineq:MgnHS} are symmetric or not. Nevertheless, as shown in \cite{HS-magnetic-sharp}, the values of $\lambda_\star$ and $ \lambda_\bullet$ are numerically very close to each other. If $\lambda\le\lambda_\star$, the expression of $\mu(\lambda)$ is explicit and given by
\[\label{mu-lambda}
\mu(\lambda)=\frac p2\,(2\,\pi)^{1-\frac2p}\(\lambda+a^2\)^{1+\frac2p}\(\frac{2\,\sqrt\pi\,\Gamma\big(\frac p{p-2}\big)}{(p-2)\,\Gamma\big(\frac p{p-2}+\frac12\big)}\)^{1-\frac2p}\,.
\]
See~\cite[Appendix]{HS-magnetic-sharp} for the details of the computation of the constant.

Inspired by the equivalence of~\eqref{CKN} and~\eqref{CKN2}, we prove that the magnetic Hardy-Sobolev inequality~\eqref{Ineq:MgnHS} is equivalent to an interpolation inequality of Caffarelli-Kohn-Nirenberg type in the presence of the Aharonov-Bohm magnetic field.
\begin{corollary}[Magnetic Caffarelli-Kohn-Nirenberg inequality]\label{Cor:magneticCaffarelli-Kohn-Nirenberg} Let $p\in(2,+\infty)$ and $\mathbf A$ as in~\eqref{AB2} for some $a\in[0,1/2]$ and $\mathsf a\le0$.
With $\mu$ as in Theorem~\ref{Thm:MgnHS}, for any $\gamma<\mathsf a^2+a^2$, we have that
\[
\ir2{\frac{|\nA\phi|^2}{|x|^{2\mathsf a}}}\ge\gamma\,\ir2{\frac{|\phi|^2}{|x|^{2\mathsf a+2}}}+\mu(\mathsf a^2-\gamma)\,\(\ir2{\frac{|\phi|^p}{|x|^{\mathsf a\,p+2}}}\)^{2/p}\quad\forall\,\phi\in\mathcal D(\R^2;\C)
\]
and $\mu(\lambda)$ with $\lambda=\mathsf a^2-\gamma$ is the optimal constant.\end{corollary}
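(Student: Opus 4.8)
The plan is to prove the inequality by the change of unknown $\phi=|x|^{\mathsf a}\,\psi$, in exact analogy with the way \eqref{CKN} is recast as the Hardy--Sobolev inequality \eqref{CKN2} in the non-magnetic setting, and to check that this substitution turns the magnetic Caffarelli--Kohn--Nirenberg inequality into the magnetic Hardy--Sobolev inequality \eqref{Ineq:MgnHS} with spectral parameter $\lambda=\mathsf a^2-\gamma$. Given $\phi\in\mathcal D(\R^2;\C)$, I would set $\psi=|x|^{-\mathsf a}\phi$; since $\mathsf a\le0$, this $\psi$ is bounded and compactly supported, vanishes like $|x|^{|\mathsf a|}$ at the origin, and a short computation shows that $\psi\in\HA(\R^2)$, so that Theorem~\ref{Thm:MgnHS} applies to it.

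The key step is the computation of the weighted magnetic kinetic energy in the polar coordinates of \eqref{AB2}. Because the Aharonov--Bohm potential $\mathbf A=\frac ar\,\mathbf e_\theta$ is purely angular while $|x|^{\mathsf a}$ is radial, multiplication by $|x|^{\mathsf a}$ commutes with the angular component of $\nA$ and only the radial derivative produces an extra term:
\[
\frac{|\nA\phi|^2}{|x|^{2\mathsf a}}=\Big|\partial_r\psi+\tfrac{\mathsf a}r\,\psi\Big|^2+\tfrac1{r^2}\big|(\partial_\theta-i\,a)\psi\big|^2=|\nA\psi|^2+\tfrac{\mathsf a^2}{r^2}\,|\psi|^2+\tfrac{\mathsf a}r\,\partial_r|\psi|^2\,.
\]
Integrating against $dx=r\,dr\,d\theta$, the cross term collapses to $\mathsf a\int_0^{2\pi}\!\int_0^\infty\partial_r|\psi|^2\,dr\,d\theta$, a total radial derivative whose boundary contributions vanish by the compact support of $\psi$ together with the decay $|\psi|^2\sim|x|^{2|\mathsf a|}\to0$ at the origin, where the hypothesis $\mathsf a\le0$ is used. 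Hence
\[
\ir2{\frac{|\nA\phi|^2}{|x|^{2\mathsf a}}}=\ir2{|\nA\psi|^2}+\mathsf a^2\,\ir2{\frac{|\psi|^2}{|x|^2}}\,.
\]

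The remaining two integrals transform with no boundary term, the weights being chosen precisely so that $|\phi|^2/|x|^{2\mathsf a+2}=|\psi|^2/|x|^2$ and $|\phi|^p/|x|^{\mathsf a\,p+2}=|\psi|^p/|x|^2$. Substituting the three identities, the asserted inequality becomes
\[
\ir2{|\nA\psi|^2}+(\mathsf a^2-\gamma)\,\ir2{\frac{|\psi|^2}{|x|^2}}\ge\mu(\mathsf a^2-\gamma)\,\Big(\ir2{\frac{|\psi|^p}{|x|^2}}\Big)^{2/p}\,,
\]
which is exactly \eqref{Ineq:MgnHS} with $\lambda=\mathsf a^2-\gamma$; moreover the hypothesis $\gamma<\mathsf a^2+a^2$ is equivalent to $\lambda>-\,a^2$, the admissible range of Theorem~\ref{Thm:MgnHS}. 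This establishes the inequality with constant $\mu(\mathsf a^2-\gamma)$.

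Finally, for the optimality statement I would note that $\psi\mapsto|x|^{\mathsf a}\psi$ is a bijection matching the three functionals term by term, so the best constant in the magnetic CKN inequality equals $\mu(\lambda)$. The main obstacle lies here rather than in the algebra: since the Hardy--Sobolev optimizer of Theorem~\ref{Thm:MgnHS} does not vanish at the origin, it is not the image of a $\mathcal D(\R^2;\C)$ function, so one must argue that $\{|x|^{-\mathsf a}\phi:\phi\in\mathcal D(\R^2;\C)\}$ is dense in the natural completion and that the change of variables extends to an isomorphism of the corresponding weighted magnetic Sobolev spaces, ensuring that the two infima coincide. Once $\mathsf a\le0$ is invoked, the pointwise identity and the vanishing of the boundary term are routine.
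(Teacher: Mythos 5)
Your proposal is correct and follows essentially the same route as the paper: the substitution $\phi=|x|^{\mathsf a}\,\psi$, the identity $\ir2{|\nA\phi|^2\,|x|^{-2\mathsf a}}=\ir2{|\nA\psi|^2}+\mathsf a^2\ir2{|\psi|^2\,|x|^{-2}}$, and an application of~\eqref{Ineq:MgnHS} with $\lambda=\mathsf a^2-\gamma$. You simply spell out the cross-term and boundary-term computation and the density argument for optimality, details the paper leaves implicit.
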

The cases of symmetry and symmetry breaking in Theorem~\ref{Thm:MgnHS} have their exact counterpart in Corollary~\ref{Cor:magneticCaffarelli-Kohn-Nirenberg}. Details are left to the reader.
\begin{proof} Let us consider the function $\phi(x)=|x|^{\mathsf a}\,\psi(x)$ and observe that
\[
\ir2{\frac{|\nA\phi|^2}{|x|^{2\mathsf a}}}=\ir2{|\nA\psi|^2}+\mathsf a^2\ir2{\frac{|\psi|^2}{|x|^2}}
\]
and conclude by applying~\eqref{Ineq:MgnHS} to $\psi$ with $\lambda=\mathsf a^2-\gamma$.\end{proof}

\subsection{A magnetic Hardy inequality in \texorpdfstring{$\R^2$}{R2}}\label{Subsec:4.3}
Another consequence of Theorem~\ref{Thm:MgnHS} is the following \emph{magnetic Keller-Lieb-Thirring inequality}, which can be found in~\cite[Theorem~1]{HS-magnetic-sharp}. Let $q=p/(p-2)$. The ground state energy $\lambda_1$ of the magnetic Schr\"odinger operator $-\LapA-\phi$ on $\R^2$ is such that
\be{KLTR2}
\lambda_1(-\LapA-\phi)\ge-\,\lambda\(\mu\)\quad\mbox{where}\quad\mu=\(\ir2{|\phi|^q\,|x|^{2\,(q-1)}}\)^{1/q}
\ee
and $\mu\mapsto\lambda(\mu)$ is a convex monotone increasing function on~$\R^+$ such that $\lim_{\mu\to0_+}\lambda(\mu)=-\,a^2$, defined as the inverse of $\lambda\mapsto\mu(\lambda)$ of Theorem~\ref{Thm:MgnHS}. Again $\lambda\(\mu\)$ is optimal in~\eqref{KLTR2} and the cases of symmetry and symmetry breaking are in correspondence with the ones of Theorem~\ref{Thm:MgnHS}.

Alternatively, let us consider a function $\phi$ on $\R^2$. We can estimate an associated magnetic Schr\"odinger energy from below by
\[
\ir2{\(|\nA\psi|^2-\tau\,\frac{\phi}{|x|^2}\,|\psi|^2\)}\ge\ir2{|\nA\psi|^2}-\,\tau\(\ir2{\frac{|\phi|^q}{|x|^2}}\)^\frac1q\(\ir2{\frac{|\psi|^p}{|x|^2}}\)^\frac2p
\]
by H\"older's inequality, with $q=p/(p-2)$, for an arbitrary parameter $\tau>0$. For an appropriate choice of $\tau$, we obtain the following result.
\begin{theorem}[A magnetic Hardy inequality]\label{Cor:Prop:Hardy2} Let $q\in(1,2)$ and $\mathbf A$ as in~\eqref{AB2} for some $a\in[0,1/2]$. Then for any function $\phi\in\mathrm L^q\(\R^2\,|x|^{-2}\,dx\)$, we have
\[
\ir2{|\nA\psi|^2}\ge\mu(0)\(\ir2{\frac{|\phi|^q}{|x|^2}}\)^{-\frac1q}\ir2{\frac{\phi}{|x|^2}\,|\psi|^2}\quad\forall\,\psi\in\HA(\R^2)\,,
\]
where $\mu(\cdot)$ is the best constant in \eqref{Ineq:MgnHS}. Finally, when $\;a^2\le 4/(12+p^2)$, we know the value of $\mu(0)$ explicitly:
\[
\mu(0)=\frac p2\,(2\,\pi)^{1-\frac2p}\,a^{2+\frac4p}\(\frac{2\,\sqrt\pi\,\Gamma\big(\frac p{p-2}\big)}{(p-2)\,\Gamma\big(\frac p{p-2}+\frac12\big)}\)^{1-\frac2p}\,.
\]\end{theorem}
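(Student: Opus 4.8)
The plan is to derive the inequality by duality, feeding the magnetic Hardy-Sobolev inequality~\eqref{Ineq:MgnHS} of Theorem~\ref{Thm:MgnHS} at $\lambda=0$ into a H\"older estimate of the potential term; all the delicate analysis (existence, symmetry, and the closed form of the optimizer, hence the value of the optimal constant) is already carried by the cited result. Throughout, $p$ denotes the exponent conjugate to $q$ via $q=p/(p-2)$, equivalently $p=2q/(q-1)$, so that the hypothesis $q\in(1,2)$ amounts to $p\in(4,+\infty)$, which is well inside the admissible range of Theorem~\ref{Thm:MgnHS}.

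First I would split the weight as $|x|^{-2}=|x|^{-2/q}\,|x|^{-2/q'}$, where $q'=p/2$ is the conjugate exponent of $q$, and apply H\"older's inequality to the product $\big(\phi\,|x|^{-2/q}\big)\big(|\psi|^2\,|x|^{-2/q'}\big)$. Since $2\,q'=p$ and $1/q'=2/p$, this produces exactly the display preceding the statement, namely the bound of the potential term by $\Phi\,X$, where I write $\Phi:=\big(\int_{\R^2}|\phi|^q\,|x|^{-2}\,dx\big)^{1/q}$ and $X:=\big(\int_{\R^2}|\psi|^p\,|x|^{-2}\,dx\big)^{2/p}$. Inequality~\eqref{Ineq:MgnHS} at $\lambda=0$ reads $\int_{\R^2}|\nA\psi|^2\,dx\ge\mu(0)\,X$. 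Choosing the free parameter $\tau=\mu(0)/\Phi$ makes the magnetic Schr\"odinger form nonnegative:
\[
\ir2{\Big(|\nA\psi|^2-\tau\,\frac{\phi}{|x|^2}\,|\psi|^2\Big)}\ge\mu(0)\,X-\tau\,\Phi\,X=\big(\mu(0)-\tau\,\Phi\big)\,X=0\,,
\]
and rearranging this is precisely the asserted Hardy inequality.

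It then remains to make $\mu(0)$ explicit. By Theorem~\ref{Thm:MgnHS} the closed-form expression for $\mu(\lambda)$ is valid throughout the symmetry range $\lambda\le\lambda_\star=4\,(1-4\,a^2)/(p^2-4)-a^2$, so it suffices to check that $\lambda=0$ belongs to this range and then to set $\lambda=0$ in the formula, using $(a^2)^{1+2/p}=a^{2+4/p}$. The one genuinely computational point of the proof is the equivalence $\lambda_\star\ge0\iff a^2\le 4/(12+p^2)$, which I expect to be the main (and only real) obstacle: it is exactly this elementary but essential threshold computation that explains where the hypothesis $a^2\le 4/(12+p^2)$ of the last sentence comes from. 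Everything else is a routine H\"older-plus-Hardy-Sobolev combination, so I would not expect any serious difficulty beyond bookkeeping of the conjugate exponents.
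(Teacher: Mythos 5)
Your proposal is correct and follows essentially the same route as the paper: H\"older's inequality with exponents $q=p/(p-2)$ and $p/2$ applied to the weighted potential term, the magnetic Hardy--Sobolev inequality~\eqref{Ineq:MgnHS} at $\lambda=0$, and the choice $\tau=\mu(0)\,\big(\int_{\R^2}|\phi|^q\,|x|^{-2}\,dx\big)^{-1/q}$. Your identification of the threshold $\lambda_\star\ge0\iff a^2\le 4/(12+p^2)$ as the source of the hypothesis for the explicit value of $\mu(0)$ is exactly right.
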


\section{Aharonov-Bohm magnetic Hardy inequalities in \texorpdfstring{$\R^3$}{R3}}\label{Sec:Hardy-Aharonov-Bohm}

In this section we address the issue of improved magnetic Hardy inequalities with the Aharonov-Bohm magnetic potential in dimension $d=3$ as defined by~\eqref{AB3}. Our results improve upon~\cite[Section~V.B]{MR3202866}, including the case of a constant magnetic field.

\subsection{An improved Hardy inequality with radial symmetry}\label{Sec:Hardy-radial}
In~\cite[Section~V.B]{MR3202866}, it is proved that \emph{for all $a>0$, there is a constant $\mathcal C(a)$ such that $\mathcal C(a)=a^2$ if $a\in[0,1/2]$ and}
\be{Ekholm-Portmann}
\ird{|\nA\psi|^2}\ge\big(\tfrac14+\mathcal C(a)\big)\ird{\frac{|\psi|^2}{|x|^2}}\quad\forall\,\psi\in\HA(\R^3)\,.
\ee
If we allow for an angular dependence, we have the following result.
\begin{theorem}\label{Thm:Aharonov-Bohm} Let $\mathbf A$ as in~\eqref{AB3}, $a\in[0,1/2]$ and $q\in(1,+\infty)$. Then, for all $\pot\in\mathrm L^q(\S^2)$,
\[
\ird{|\nA\psi|^2}\ge\ird{\(\frac14+\frac{\mu_{a,p}(0)}{\nrmSdeux \pot q}\,\pot(\omega)\)\,\frac{|\psi|^2}{|x|^2}}\quad\forall\,\psi\in\HA(\R^3)\,.
\]
Here $\omega=x/|x|$ and $\mu_{a,p}$ is defined as in Proposition~\ref{prop:interpolation}.
\end{theorem}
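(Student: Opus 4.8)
The plan is to separate variables, exploiting that the Aharonov--Bohm potential~\eqref{AB3} is aligned with the $x_3$-axis. Writing $x=r\,\omega$ with $r=|x|$ and $\omega\in\S^2$, and passing from the cylindrical form of $\nA$ to spherical coordinates $(r,\vartheta,\theta)$ where $\rho=r\,\sin\vartheta$, $z=r\,\cos\vartheta$ and $\theta$ is unchanged, I would first establish the exact splitting of the magnetic Dirichlet integrand
\[
|\nA\psi|^2=\Big|\frac{\partial\psi}{\partial r}\Big|^2+\frac1{r^2}\Big(\Big|\frac{\partial\psi}{\partial\vartheta}\Big|^2+\frac1{\sin^2\vartheta}\,\Big|\frac{\partial\psi}{\partial\theta}-i\,a\,\psi\Big|^2\Big)\,.
\]
The radial part comes from the fact that the change of frame $(\partial_\rho,\partial_z)\mapsto(\partial_r,r^{-1}\partial_\vartheta)$ is an orthogonal rotation at each point, so that $|\partial_\rho\psi|^2+|\partial_z\psi|^2=|\partial_r\psi|^2+r^{-2}\,|\partial_\vartheta\psi|^2$, while the azimuthal term $\rho^{-2}\,|\partial_\theta\psi-i\,a\,\psi|^2$ carries the flux unchanged and becomes $r^{-2}\sin^{-2}\vartheta\,|\partial_\theta\psi-i\,a\,\psi|^2$. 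Identifying $z=\cos\vartheta$, the bracket is precisely $|\nA u|^2$ in the sense of the spherical magnetic gradient~\eqref{nablaS2} with the same flux $a$, where $u=\psi(r,\cdot)$; hence $|\nA\psi|^2=|\partial_r\psi|^2+r^{-2}\,|\nA u|^2$.

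Next I would bound the two pieces separately, fibre by fibre. For the angular term, at each fixed $r>0$ I apply Corollary~\ref{Cor:KLT22} to $u=\psi(r,\cdot)\in\HA(\S^2)$ with the exponent $p=2\,q/(q-1)$ (equivalently $q=p/(p-2)$), which gives
\[
\iSdeux{|\nA\psi(r,\cdot)|^2}\ge\frac{\mu_{a,p}(0)}{\nrmSdeux\pot q}\iSdeux{\pot\,|\psi(r,\cdot)|^2}\,.
\]
For the radial term I use the one-dimensional weighted Hardy inequality $\int_0^{+\infty}|f'(r)|^2\,r^2\,dr\ge\tfrac14\int_0^{+\infty}|f(r)|^2\,dr$, which follows from expanding $\int_0^{+\infty}\big|f'+\tfrac1{2r}f\big|^2\,r^2\,dr\ge0$ and integrating the cross term by parts; applied to $f(r)=\psi(r,\omega)$ and then integrated in $\omega$, it produces the coefficient $1/4$ together with the weight $|x|^{-2}$.

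Finally I would reassemble, using $dx=r^2\,dr\,d\omega$ on $\R^3$ and the harmless normalization of the probability measure: multiplying the inequality of Corollary~\ref{Cor:KLT22} by $4\pi$ replaces $d\sigma$ by the surface measure $d\omega$ on both sides and leaves the fixed number $\nrmSdeux\pot q$ untouched. Integrating the two fibrewise estimates in the remaining variable and adding them reproduces exactly
\[
\ird{|\nA\psi|^2}\ge\frac14\ird{\frac{|\psi|^2}{|x|^2}}+\frac{\mu_{a,p}(0)}{\nrmSdeux\pot q}\ird{\pot(\omega)\,\frac{|\psi|^2}{|x|^2}}\,,
\]
which is the claim. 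I expect the main obstacle to be the rigorous justification of this separation together with the fibrewise application of the two inequalities: one must verify that the azimuthal component of~\eqref{AB3} restricted to each sphere is precisely the flux-$a$ magnetic term of~\eqref{nablaS2}, and then justify Fubini and the fibrewise Hardy and Corollary~\ref{Cor:KLT22} estimates by a density argument for $\psi\in\HA(\R^3)$ (first for functions smooth and compactly supported away from the axis $\rho=0$, then by approximation), paying attention to the singularity of the potential on the $x_3$-axis.
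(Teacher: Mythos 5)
Your proposal is correct and follows essentially the same route as the paper: the paper's proof likewise splits the Dirichlet integral in spherical coordinates, handles the radial part by expanding the square $\int_0^{+\infty}\left|\partial_r\psi+\tfrac1{2r}\,\psi\right|^2 r^2\,dr\ge0$ and integrating by parts, and applies Corollary~\ref{Cor:KLT22} fibrewise for the angular part. You simply spell out more explicitly the identification of the azimuthal flux term with the $\S^2$ magnetic gradient~\eqref{nablaS2} and the density argument, which the paper leaves implicit.
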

In the case $a\in[0,1/2]$, according to Proposition~\ref{prop:interpolation}, we find in the limit case as $p\to2_+$ that $\mu_{a,2}(0)\ge\Lambda_a=a\,(a+1)$ and improve the estimate~\eqref{Ekholm-Portmann} to $\mathcal C(a)=a\,(a+1)$ if $\pot\equiv1$.
\begin{proof} Let us use spherical coordinates $(r,\omega)\in[0,+\infty)\times\S^2$. The result follows from an expansion of the square and an integration by parts in
\[
0\le\int_0^{+\infty}\left|\partial_r\psi+\tfrac1{2\,r}\,\psi\right|^2\,r^2\,dr=\int_0^{+\infty}|\partial_r\psi|^2\,r^2\,dr-\tfrac14\int_0^{+\infty}|\psi|^2\,dr
\]
for the radial part of the Dirichlet integral, and from Corollary~\ref{Cor:KLT22} for the angular part.\end{proof}

\subsection{An improved Hardy inequality with cylindrical symmetry}\label{Sec:Hardy-cylindrical}

The improved Hardy inequality (without angular kinetic energy) of Lemma~\ref{lem:Hardy} and~\eqref{Ineq:KLT1} can be combined into the following improved Hardy inequality in presence of a magnetic potential.
\begin{theorem}\label{Thm:ImprovedHardy} Let $\mathbf A$ as in~\eqref{AB3}, $a\in[0,1/2]$, $p>2$, $q=p/(p-2)$ and $\phi\in\mathrm L^q(\S^1)$. For any $\psi\in\HA(\R^3)$, we have
\[
\ird{|\nA\psi|^2}\ge\frac14\ird{\frac{|\psi|^2}{|x|^2}}+\frac{\mu_{a,p}(0)}{\nrmS\phi q}\irdmu{\frac{\phi(\theta)}{\rho^2}\,|\psi(\rho,\theta,z)|^2}\,.
\]
\end{theorem}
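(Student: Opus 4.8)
The plan is to split the magnetic Dirichlet energy in cylindrical coordinates into a radial--axial piece and an angular piece, and to estimate the two separately using the ingredients already at hand: the angular-free improved Hardy inequality of Lemma~\ref{lem:Hardy} for the first, and the magnetic Hardy inequality~\eqref{Ineq:KLT1} on $\S^1$ for the second. The decisive structural fact is that the Aharonov--Bohm potential~\eqref{AB3} enters only through the angular derivative, leaving the $\rho$- and $z$-derivatives untouched, so the two mechanisms do not interfere.

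First I would write, with the expression of $\nA$ recalled before Corollary~\ref{Cor:Hardy2},
\[
\ird{|\nA\psi|^2}=\irdmu{\(\big|\tfrac{\partial\psi}{\partial\rho}\big|^2+\big|\tfrac{\partial\psi}{\partial z}\big|^2\)}+\irdmu{\frac1{\rho^2}\,\big|\tfrac{\partial\psi}{\partial\theta}-\,i\,a\,\psi\big|^2}\,.
\]
To the first term I would apply Lemma~\ref{lem:Hardy} verbatim, which already yields the full $\frac14\ird{\frac{|\psi|^2}{|x|^2}}$ since $|x|^2=\rho^2+z^2$. For the angular term I would freeze $(\rho,z)$ and regard $\psi(\rho,\cdot,z)$ as a function on $\S^1$, to which~\eqref{Ineq:KLT1} applies slice by slice --- exactly the mechanism indicated for the companion Theorem~\ref{Thm:ImprovedHardy2}. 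The only bookkeeping point is that~\eqref{Ineq:KLT1} is normalized with $d\sigma=(2\pi)^{-1}\,d\theta$ on both sides, so the factor $(2\pi)^{-1}$ cancels and the slice inequality holds with the bare measure $d\theta$; multiplying it by $\rho^{-1}$ and integrating in $d\rho\,dz$, that is against $\rho^{-2}\,d\mu$, converts it into
\[
\irdmu{\frac1{\rho^2}\,\big|\tfrac{\partial\psi}{\partial\theta}-\,i\,a\,\psi\big|^2}\ge\frac{\mu_{a,p}(0)}{\nrmS\phi q}\irdmu{\frac{\phi(\theta)}{\rho^2}\,|\psi(\rho,\theta,z)|^2}\,.
\]
Summing the two lower bounds gives the claimed inequality.

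The step requiring a little care --- and the only one I would regard as a genuine, if mild, obstacle --- is the measurability and integrability justification that the slices $\psi(\rho,\cdot,z)$ belong to $\HA(\S^1)$ for almost every $(\rho,z)$, so that~\eqref{Ineq:KLT1} is legitimately applicable and the subsequent integration in $(\rho,z)$ is licit. Since $\psi\in\HA(\R^3)$ this follows by a standard density and Fubini argument: I would first establish the inequality for $\psi$ smooth and compactly supported away from the axis $\{\rho=0\}$, using that the angular kinetic and potential integrals are finite for a.e.\ $(\rho,z)$, and then pass to the general case by density. Everything else reduces to the two already-proved inequalities, which is precisely why the parallel Theorem~\ref{Thm:ImprovedHardy2} could be left to the reader.
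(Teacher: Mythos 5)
Your proof is correct and is exactly the argument the paper intends: the theorem is introduced with the single sentence that Lemma~\ref{lem:Hardy} and~\eqref{Ineq:KLT1} ``can be combined,'' and your decomposition of the cylindrical Dirichlet energy, with Lemma~\ref{lem:Hardy} applied to the $(\rho,z)$-part and~\eqref{Ineq:KLT1} applied slicewise in $\theta$ (the $(2\pi)^{-1}$ normalization cancelling as you note), is precisely that combination. The density/Fubini remark is a reasonable way to tidy up the slicing, and nothing further is needed.
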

Notice that the inequality is a strict improvement upon the Hardy inequality without a magnetic potential combined with the diamagnetic inequality. A simple case which is particularly illuminating is $\phi\equiv1$ with $a^2\le1/(p+2)$ so that \hbox{$\mu_{a,p}(0)=a^2$} according to Proposition~\ref{prop:rigidity}, in which case we obtain that
\[
\ird{|\nA\psi|^2}\ge\frac14\ird{\frac{|\psi|^2}{|x|^2}}+a^2\ird{\frac{|\psi|^2}{|\rho|^2}}\quad\forall\,\psi\in\HA(\R^3)\,.
\]

\section*{Acknowledgments}\begin{spacing}{0.9}
{\small This research has been partially supported by the project \emph{EFI}, contract~ANR-17-CE40-0030 (D.B., J.D.) of the French National Research Agency (ANR), by the PDR (FNRS) grant T.1110.14F and the ERC AdG 2013 339958 ``Complex Patterns for Strongly Interacting Dynamical Systems - COMPAT'' grant (D.B.), by the RSF grant 18-11-00032 (A.L.) and by the NSF grant DMS-1600560 (M.L.). The authors thank an anonymous referee for constructive remarks which significantly improved the presentation of the results.\\
\scriptsize\copyright\,2020 by the authors. This paper may be reproduced, in its entirety, for non-commercial purposes.}\end{spacing}


\bigskip\begin{flushright}\today\end{flushright}

\end{document}